\date{}
\newtheorem{theorem}{Theorem}
\newtheorem{prop}{Proposition}
\newtheorem{cor}{Corollary}
\theoremstyle{definition}
\newtheorem{defn}{Definition}
\newtheorem{ex}{Example}
\newtheorem{rem}{Remark}
\begin{document}
\title{\bf Affine and Projective Planes Linked with Projective Lines over Certain
Rings of Lower Triangular Matrices}
\author{\bf Edyta Bartnicka and Metod Saniga}
\maketitle
\begin{abstract}
\noindent
Let $T_n(q)$ be the ring of lower triangular  matrices of order $n \geq 2$ with entries from the finite field $F(q)$ of order $q \geq 2$ and
let ${^2T_n(q)}$ denote its free left module. For $n=2,3$ it is shown that the projective line over $T_n(q)$ gives rise to a set of $(q+1)^{(n-1)}q^{\frac{3(n-1)(n-2)}{2}}$ affine planes of order $q$. The points of such an affine plane are non-free cyclic submodules of ${^2T_n(q)}$ not contained in any non-unimodular free cyclic submodule of ${^2T_n(q)}$ and its lines are points of the projective line. 
Furthermore, it is demonstrated that each affine plane can be extended to the projective plane of order $q$, with the `line at infinity' being represented by those free cyclic submodules of ${^2T_n(q)}$ that are generated by non-unimodular pairs. Our approach can straightforwardly be adjusted to address the case of arbitrary $n$.
\end{abstract}
{\bf Keywords:} projective ring lines --- rings of lower triangular matrices --- non-unimodular free cyclic submodules --- affine/projective planes

\section{Introduction}
The main motivation for this paper stems from physics. Some decade ago the second author and his colleagues found out that projective lines over finite rings have interesting applications in physics, see e.\,g. \cite{spp,havsa}. When dealing with  lines over small rings of order less than 32, it was noticed that in some cases such lines are not only naturally linked with (or give rise to) affine planes over certain finite fields, but the corresponding rings also feature free cyclic submodules that are generated by non-unimodular (or, what in the finite case amounts to the same, non-admissible) vectors \cite{sp}. This is done in the framework of projective lines over rings of specific lower triangular matrices, which not only incorporates the most relevant physical cases, but also clarifies the so-far unnoticed role played by non-unimodular free cyclic submodules. This paper is aimed at the rigorous mathematical treatment, as well as a deeper understanding, of  the above-described `observations/findings' of physicists. This is done in the framework of projective lines over rings of specific lower triangular matrices, which also incorporates the most relevant physical cases.

\section{Preliminaries}

\bigskip
Let $T_n(q)$ be the ring of lower triangular  matrices $$X=\left[\begin{array}{ccclr}
x_{11}&0&0&0\\x_{21}&x_{22}&0&0\\ \vdots& \vdots&\ddots&0\\ x_{n1}&x_{n2}&\dots&x_{nn}\end{array}\right]$$ of order $n \geq 2$ with entries from the finite field $F(q)$ of order $q \geq 2$. For $n=2$, i.e., in the case of the ring of ternions, we will use, as usual, the symbol  $T(q)$ instead of $T_2(q)$. Let us put $F^*(q) := F(q)\backslash\{0\}$ and let $T_n^*(q)$ stands for the group of invertible elements of the ring $T_n(q)$. The Jacobson radical of the ring will be denoted by $J$. Consider the free left module ${^2T_n(q)}$ over $T_n(q)$. If $\big(X, Y\big)\subset{^2T_n(q)}$ then the set: $T_n(q)\big(X, Y\big)=\{A\big(X, Y\big);A\in T_n(q)\}$ is a left cyclic submodule of ${^2T_n(q)}$. If the equation $(AX, AY)=(0, 0)$ implies that $A=0$, then $T_n(q)\big(X, Y\big)$ is free. 
\begin{rem}\label{generators.of.fcs}
Obviously, a submodule $T_n(q)\big(X, Y\big)\subset{^2T_n(q)}$ is free if, and only if, its order is $q^{S_n}$, where $S_n=1+2+\dots+n$. 

Additionally, such free cyclic submodule is generated exactly by $|T_n^*(q)|$ elements of the form  $U(X, Y)$, where $U$ runs through all the elements of $T_n^*(q)$. 
\end{rem}

A pair $(X, Y)\subset{^2T_n(q)}$ is {\sl unimodular}, if $x_{ii}\neq 0 \vee y_{ii}\neq 0$ for any $i=1, \dots, n$.
{\sl The projective line}  $\mathbb{P}(T_n(q))$ over the ring $T_n(q)$ is the set of all cyclic submodules $T_n(q)\big(X, Y\big)$, where $\big(X, Y\big)\subset{^2T_n(q)}$ is unimodular. 
We refer to \cite{her, design} for definitions of the unimodularity and the projective line in the case of an arbitrary finite associative ring with unity.

It is well known that any unimodular pair generates a free cyclic submodule. Therefore the projective line  $\mathbb{P}(T_n(q))$ is the set of those free cyclic submodules of ${^2T_n(q)}$ which are generated by unimodular pairs. They are called {\sl unimodular free cyclic submodules} or {\sl points} of $\mathbb{P}(T_n(q))$. 

\vspace*{0.3cm}

Let us recall some of the most elementary facts of ordinary plane geometry.

\begin{defn}\label{aff}
{\sl An affine plane}  is a system of points and lines with an incidence relation between the points and lines that satisfy the following axioms.

{\bf A1} Any two distinct points lie on a unique line.

{\bf A2} Given any line $l$ and any point $P$ not on $l$ there is a unique line $m$ which contains the point $P$ and does not meet the line $l$.

{\bf A3} There exist three non-collinear points. (A set of points $P_1, \dots , P_n$ is said to
be collinear if there exists a line $l$ containing them all.)

\vspace*{0.3cm}

An affine plane is denoted by $(\mathbb{A}, L)$, where $\mathbb{A}$ is the set of all points and $L$ is the set of all lines.

A finite affine plane $(\mathbb{A}, L)$ (that is to say, with a finite number of points) is of order $n$, if there exists a line in $L$ containing exactly $n$ points.
\end{defn}

We say that two lines of an affine plane $(\mathbb{A}, L)$ are {\sl parallel} if they are equal, or have no point in common. To each set of mutually parallel lines we add a single new point incident with each line of this set. The point added is distinct for each such set. These new points are called {\sl points at infinity}. We also add a new line incident with all the points at infinity (and no other points). This line is called {\sl the line at infinity}.

The completion (or closure) $(\Pi, \mathbb{L})$ of an affine plane $(\mathbb{A}, L)$ is defined as follows. The points of $(\Pi, \mathbb{L})$ are the points
of $\mathbb{A}$, plus all the points at infinity of $(\mathbb{A}, L)$. A line in $(\Pi, \mathbb{L})$ is either

(a) an ordinary line $l$ of $L$, plus the point at infinity of $l$, or

(b) the line at infinity, consisting of all the  points at infinity of $(\mathbb{A}, L)$.

The completion $(\Pi, \mathbb{L})$ is a projective plane (see {\cite{hart}}).  
A finite projective plane $(\Pi, \mathbb{L})$ (that is to say, with a finite number of points) is of order $n$, if there exists a line in $\mathbb{L}$ containing exactly $n+1$ points.

\section{Affine and projective planes associated with ternionic projective lines}

An important role in our consideration will be played by pairs not contained in any point of  the projective line, so called, {\sl outliers} (as first shown in \cite{shpp}, see also \cite{hs,sp}). To be more precise, we are interested in those outliers which generate free cyclic submodules.

\begin{theorem}\label{nfcs}
There are exactly $q+1$ free cyclic submodules of \ ${^2T(q)}$ generated by outliers. These are: 
$$T(q)\left(\left[\begin{array}{cclr}
1&0\\ 0&0\end{array}\right],\left[\begin{array}{cclr}
0&0\\1&0\end{array}\right]\right), T(q)\left(\left[\begin{array}{cclr}
k&0\\ 1&0\end{array}\right],\left[\begin{array}{cclr}
1&0\\0&0\end{array}\right]\right),$$
where $k$ runs through all the elements of $F(q).$
\end{theorem}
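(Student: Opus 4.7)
\emph{Plan of proof.} The plan is to classify outlier pairs according to which diagonal index fails unimodularity and to rule out one of the two cases. For $n=2$, an outlier $(X,Y)\subset{^2T(q)}$ satisfies either (a) $x_{11}=y_{11}=0$, or (b) $x_{22}=y_{22}=0$. In case (a) both $X$ and $Y$ have zero first column, hence any $A=\left[\begin{array}{cc}\alpha&0\\ \beta&0\end{array}\right]$ with $\gamma=0$ annihilates the pair, and so $T(q)(X,Y)$ cannot be free. Consequently every free outlier-generated submodule must arise from case (b).

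For type (b), I would write $X=\left[\begin{array}{cc}a&0\\ b&0\end{array}\right]$ and $Y=\left[\begin{array}{cc}c&0\\ d&0\end{array}\right]$ and compute $A(X,Y)$ for a generic $A=\left[\begin{array}{cc}\alpha&0\\ \beta&\gamma\end{array}\right]\in T(q)$. The equation $A(X,Y)=(0,0)$ reduces to $\alpha a=\alpha c=0$ together with $\beta a+\gamma b=\beta c+\gamma d=0$, and a short argument shows this forces $\alpha=\beta=\gamma=0$ precisely when $ad-bc\ne 0$. Hence generating pairs of type (b) correspond bijectively to elements of $\mathrm{GL}_2(F(q))$, giving $(q^2-1)(q^2-q)$ such pairs.

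By Remark~\ref{generators.of.fcs}, each free cyclic submodule admits exactly $|T^*(q)|=q(q-1)^2$ generating pairs, so the number of distinct free cyclic submodules generated by outliers equals
\[
\frac{(q^2-1)(q^2-q)}{q(q-1)^2}=q+1,
\]
matching the claim. It then remains to verify that the $q+1$ pairs displayed in the statement are of type (b), have nonzero determinants $ad-bc$ (equal to $1$ and $-1$, respectively), and lie in pairwise distinct $T^*(q)$-orbits. The latter is a direct check: any $U=\left[\begin{array}{cc}u_{11}&0\\ u_{21}&u_{22}\end{array}\right]$ with $u_{11}u_{22}\ne 0$ carrying one listed pair to another would immediately yield a contradiction when one compares matrix entries in the top row (which is the most constrained, since the second columns are already zero).

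The main obstacle I anticipate is the orbit-distinctness verification: the counting argument only shows that exactly $q+1$ such submodules exist, so one must separately confirm that the explicit list is a genuine transversal for the $T^*(q)$-action on pairs, ruling out any coincidence among the $q$ values of $k$. Freeness and the overall count, by contrast, follow routinely once the case split on the two diagonal positions has been carried out.
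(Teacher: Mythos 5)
Your argument follows essentially the same route as the paper's: classify the non-unimodular pairs that generate free cyclic submodules, count them, divide by $|T^{*}(q)|=q(q-1)^{2}$, and check that the displayed list is a transversal. Your determinant criterion $ad-bc\neq 0$ is exactly equivalent to the paper's four-case list, your count $(q^{2}-1)(q^{2}-q)=(q-1)^{2}q(q+1)$ agrees with the paper's, and the orbit-distinctness check via the entries $a,c$ (which transform as $(ua,uc)$ with $u\in F^{*}(q)$, so the orbits are separated by the projective point $(a:c)$) is sound.

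There is, however, one genuine gap. The theorem counts free cyclic submodules generated by \emph{outliers}, i.e.\ by pairs not contained in any point of $\mathbb{P}(T(q))$, whereas you count free cyclic submodules generated by non-unimodular pairs of type (b). Since every outlier is non-unimodular, your computation only yields the upper bound: there are \emph{at most} $q+1$ such submodules. You never show that any of the $q+1$ submodules you have counted actually admits an outlier as a generator --- being non-unimodular does not by itself make a pair an outlier (the zero pair, for instance, is non-unimodular yet lies in every cyclic submodule, and the paper's Proposition 2 exhibits further non-unimodular pairs contained in points). The paper closes exactly this gap by invoking \cite[Theorem 1.2]{fcs}, which asserts that every non-unimodular free cyclic submodule is generated by outliers. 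Alternatively, a direct check works: if $A(W,Z)=(X,Y)$ with $(W,Z)$ unimodular and $x_{22}=y_{22}=0$, then $a_{22}w_{22}=a_{22}z_{22}=0$ forces $a_{22}=0$, whence $a=a_{11}w_{11}$, $b=a_{21}w_{11}$, $c=a_{11}z_{11}$, $d=a_{21}z_{11}$ and $ad-bc=0$; so every type (b) pair with $ad-bc\neq 0$ is indeed an outlier. Either of these one-line additions completes your proof. A further small slip: in case (a) the hypothesis $x_{11}=y_{11}=0$ makes the first \emph{rows} of $X$ and $Y$ vanish, not the first columns; the conclusion that every $A$ with $\gamma=0$ annihilates the pair is nevertheless correct.
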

\begin{proof}
In the light of \cite[Theorem 1.2]{fcs} all non-unimodular free cyclic submodules of ${^2T(q)}$ are generatd by outliers. 

A non-unimodular pair $\left(\left[\begin{array}{cclr}
x_{11}&0\\ x_{21}&x_{22}\end{array}\right],\left[\begin{array}{cclr}
y_{11}&0\\y_{21}&y_{22}\end{array}\right]\right)\subset {^2T(q)}$  generates free cyclic submodule of ${^2T(q)}$ if, and only if, $x_{22}=y_{22}=0$ and one of the following conditions is satisfied:
\begin{enumerate}
\item $x_{11}=0, y_{11}\neq 0, x_{21}\neq 0$;
\item $x_{11}\neq 0, y_{11}=0, y_{21}\neq 0$;
\item $x_{11}\neq 0, y_{11}\neq 0, x_{21}=0, y_{21}\neq 0$;
\item $x_{11}\neq 0, y_{11}\neq 0, x_{21}\neq 0, y_{21}\neq x_{11}^{-1}x_{21}y_{11}$.
\end{enumerate}
Hence the total number of outliers generating free cyclic submodules of ${^2T(q)}$ is $(q-1)^2(q+1)q$ and, according to Remark \ref{generators.of.fcs}, any such submodule is generated by $(q-1)^2q$ distinct outliers. Thus the number of non-unimodular free cyclic submodules of ${^2T(q)}$ is $q+1$. 

Simple calculations show that the listed non-unimodular free cyclic submodules are  pairwise distinct, what completes the proof.

\end{proof}

\begin{rem}\label{nonfree}
A non-free cyclic submodule of ${^2T(q)}$ is one of the following forms:
\begin{enumerate}

\item\label{rank1} of order $1$: 
$T(q)\left(\left[\begin{array}{cclr}
0&0\\ 0&0\end{array}\right],\left[\begin{array}{cclr}
0&0\\0&0\end{array}\right]\right);$

\item\label{rankq} of order $q$:
$T(q)\left(\left[\begin{array}{cclr}
0&0\\ p_{21}&p_{22}\end{array}\right],\left[\begin{array}{cclr}
0&0\\ r_{21}&r_{22}\end{array}\right]\right)$, where $p_{21}, p_{22}, r_{21}, r_{22}\in F(q)$ and not all of them are zero;

\item\label{rankq2} of order $q^2$:
\begin{itemize}
\item $T(q)\left(\left[\begin{array}{cclr}
0&0\\ 0&0\end{array}\right],\left[\begin{array}{cclr}
r_{11}&0\\ r_{21}&0\end{array}\right]\right),$  where $r_{11}\in F^*(q), r_{21}\in F(q)$;
\item 
$T(q)\left(\left[\begin{array}{cclr}
p_{11}&0\\ 0&0\end{array}\right],\left[\begin{array}{cclr}
r_{11}&0\\ 0&0\end{array}\right]\right),$ where $p_{11}\in F^*(q), r_{11}\in F(q)$;
\item
$T(q)\left(\left[\begin{array}{cclr}
p_{11}&0\\ p_{21}&0\end{array}\right],\left[\begin{array}{cclr}
0&0\\ 0&0\end{array}\right]\right)$, where $p_{11}, p_{21}\in F^*(q)$;
\item
$T(q)\left(\left[\begin{array}{cclr}
p_{11}&0\\ p_{21}&0\end{array}\right],\left[\begin{array}{cclr}
p_{11}r_{21}p_{21}^{-1}&0\\ r_{21}&0\end{array}\right]\right)$, where $p_{11}, p_{21}, r_{21}\in F^*(q)$.\end{itemize}
\end{enumerate}
\end{rem}

\bigskip
Our focus will be on those of order $q$. 
\begin{prop}
There are two types of cyclic submodules of ${^2T(q)}$ of order $q$, namely
\begin{enumerate}[(a)]
\item\label{projective} with both entries from $J$; these are 
$T(q)\left(\left[\begin{array}{cclr}
0&0\\ 1&0\end{array}\right],\left[\begin{array}{cclr}
0&0\\0&0\end{array}\right]\right)$, 
and\linebreak
$T(q)\left(\left[\begin{array}{cclr}
0&0\\ k &0\end{array}\right],\left[\begin{array}{cclr}
0&0\\1&0\end{array}\right]\right)$, 
with $k$ running through all the elements of $F(q)$;

\item\label{affine} with at least one entry not belonging to $J$; the latter form $q+1$ pairwise disjoint sets:
\begin{itemize}
\item a single set (hencefort referred to as the first set)
$$\left\{T(q)\left(\left[\begin{array}{cclr}
0&0\\ p_{21}&1\end{array}\right],\left[\begin{array}{cclr}
0&0\\r_{21}&0\end{array}\right]\right), p_{21}, r_{21}\in F(q)\right\} $$

\noindent
and

\item $q$ disjoint sets ($k$-sets) 
$$\left\{T(q)\left(\left[\begin{array}{cclr}
0&0\\ p_{21}&k\end{array}\right],\left[\begin{array}{cclr}
0&0\\r_{21}&1\end{array}\right]\right), p_{21}, r_{21}\in F(q)\right\},$$

\noindent
where, as above, $k$ runs through all the elements of $F(q)$.\end{itemize}
\end{enumerate}
\end{prop}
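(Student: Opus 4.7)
The plan is to take as starting point Remark \ref{nonfree}, which already describes every order-$q$ cyclic submodule of ${^2T(q)}$ as $T(q)(X,Y)$, where $X$ and $Y$ have zero first rows and second-row scalars $p_{21},p_{22},r_{21},r_{22}$ (not all four zero). The dichotomy of the proposition corresponds exactly to $p_{22}=r_{22}=0$ versus $(p_{22},r_{22})\neq(0,0)$, i.e.\ ``both entries in $J$'' versus ``at least one entry outside $J$''.

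For case (a), with $p_{22}=r_{22}=0$, the product $A(X,Y)$ depends only on the diagonal entry $a_{22}$ of $A$, so the submodule is simply $\{\alpha(X,Y):\alpha\in F(q)\}$ and is determined by the $F(q)$-line spanned by the pair $(p_{21},r_{21})\in F(q)^2$. Enumerating the $q+1$ such lines reproduces the two sub-families listed.

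For case (b) the key move is to left-multiply the generating pair by a diagonal unit $U=\mathrm{diag}(1,\alpha)\in T^*(q)$; since $T(q)U=T(q)$, this preserves the submodule while rescaling both $(2,2)$-entries by $\alpha$. I will normalize $r_{22}$ to $1$ whenever $r_{22}\neq 0$, and otherwise normalize $p_{22}$ to $1$; this lands the generator either in the first set (when $r_{22}=0$) or in the $k$-set with $k=p_{22}r_{22}^{-1}$ (when $r_{22}\neq 0$). Thus every case-(b) submodule admits a canonical generator of the stated form.

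What remains is to verify (i) that distinct canonical parameters $(p_{21},r_{21})$ within a family give distinct submodules and (ii) that the $q+1$ families are pairwise disjoint. For (i) I will write the elements of a canonical submodule out explicitly (e.g.\ for the first set they are parametrized by $\alpha\in F(q)$) and match them with those of another canonical pair; equality element-by-element forces $\alpha=\alpha'$ and hence equality of all four scalars, so each family has exactly $q^2$ submodules. For (ii) the invariant to exploit is that the pair of $(2,2)$-entries of any element of such a submodule $M$ is a scalar multiple of $(p_{22},r_{22})$; hence the projective class $[p_{22}:r_{22}]$ is intrinsic to $M$, taking the value $[1:0]$ on the first set and $[k:1]$ on the $k$-set, which yields pairwise disjointness of the $q+1$ families (and obvious disjointness from case (a), where the $(2,2)$-entries of every element vanish). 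The main obstacle is the bookkeeping: no individual computation is hard, but the sub-cases must be lined up so that each submodule is captured by exactly one canonical form.
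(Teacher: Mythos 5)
Your proposal is correct, and it reaches the classification by a somewhat different route than the paper. The paper's proof first records the proportionality criterion (two pairs with zero first rows generate the same submodule iff they differ by a scalar $u\in F^*(q)$, so each order-$q$ submodule has exactly $q-1$ generators), then argues by double counting: there are $q^2-1$ generating pairs with $p_{22}=r_{22}=0$ and $q^2(q^2-1)$ with $p_{22}\neq 0\vee r_{22}\neq 0$, giving $q+1$ and $q^2(q+1)$ submodules respectively, and exhaustiveness of the displayed lists follows because they are (``easy to check'') pairwise distinct and have exactly these cardinalities. You instead prove exhaustiveness directly, by normalizing an arbitrary generator to a canonical representative via left multiplication by the unit $\mathrm{diag}(1,\alpha)$ (note this rescales the whole second row, not only the $(2,2)$-entries, but that is harmless since $p_{21},r_{21}$ remain free parameters), and you prove disjointness via the intrinsic invariant $[p_{22}:r_{22}]\in\mathbb{P}^1(F(q))\cup\{[0:0]\}$ rather than by matching counts. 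Both arguments rest on the same elementary observation that such a submodule is just the set of $F(q)$-scalar multiples of its generator; what your canonical-form approach buys is that it never needs the global count of pairs to conclude that the list is complete, while the paper's counting argument is shorter at the cost of leaving the pairwise-distinctness check implicit. Either way the bookkeeping you flag as the main obstacle is exactly the content of the paper's ``simple calculations.''
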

\begin{proof} 
Let not all $p_{21}, p_{22}, r_{21}, r_{22}\in F(q)$ be equal zero.  By Remark \ref{generators.of.fcs}, two pairs 
$\left(\left[\begin{array}{cclr}
0&0\\ p_{21}&p_{22}\end{array}\right],\left[\begin{array}{cclr}
0&0\\ r_{21}&r_{22}\end{array}\right]\right), \left(\left[\begin{array}{cclr}
0&0\\ s_{21}&s_{22}\end{array}\right],\left[\begin{array}{cclr}
0&0\\ t_{21}&t_{22}\end{array}\right]\right)$ generate the same cyclic submodule if, and only if, there exists $u\in F^*(q)$ such that $s_{21}=up_{21},  t_{21}=ur_{21}, s_{22}=up_{22}, t_{22}=ur_{22}$. 
Thus a cyclic submodule $T(q)\left(\left[\begin{array}{cclr}
0&0\\ p_{21}&p_{22}\end{array}\right],\left[\begin{array}{cclr}
0&0\\ r_{21}&r_{22}\end{array}\right]\right)$ of order $q$ is generated by $q-1$ distinct pairs.

The number of all pairs $\left(\left[\begin{array}{cclr}
0&0\\ p_{21}&0\end{array}\right],\left[\begin{array}{cclr}
0&0\\ r_{21}&0\end{array}\right]\right)$, where $p_{21}, r_{21}\in F(q)$ and $p_{21}\neq 0 \vee r_{21}\neq 0$ is $q^2-1$, so the number of all cyclic submodules generated by them is $q+1$. 

The number of all pairs $\left(\left[\begin{array}{cclr}
0&0\\ p_{21}&p_{22}\end{array}\right],\left[\begin{array}{cclr}
0&0\\ r_{21}&r_{22}\end{array}\right]\right)$, where $p_{21}, p_{22}, r_{21},\linebreak
 r_{22}\in F(q)$ and $p_{22}\neq 0 \vee r_{22}\neq 0$ is $q^2(q^2-1)$, thereby the number of all cyclic submodules generated by them is $q^2(q+1)$.  

It is easy to check that submodules listed in part (\ref{projective}) and submodules in all sets of part (\ref{affine}) are pairwise  distinct which leads to the desired claim.
\end{proof}

A crucial property of submodules of type (\ref{affine}) is stated in the following result:
\begin{prop}\label{crucial}
Submodules of type (\ref{affine}) are the only non-free cyclic submodules of ${^2T(q)}$ not contained in any free cyclic submodule of ${^2T(q)}$ generated by an outlier.
\end{prop}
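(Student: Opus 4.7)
\noindent\emph{Proof plan.}
The claim is a biconditional, so I plan to handle the two directions separately, leveraging a single structural observation about outlier-generated submodules. The observation is that every pair $(P, Q)$ lying in an outlier-generated free cyclic submodule satisfies $p_{22} = q_{22} = 0$: each generator listed in Theorem \ref{nfcs} has $x_{22} = y_{22} = 0$, and for lower triangular $A, X$ one has $(AX)_{22} = a_{22}x_{22}$, so zero $(2,2)$-entries propagate to every element of the submodule.

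The forward implication is then immediate. By the previous proposition, each generator of a submodule of type (\ref{affine}) has either $x_{22} = 1$ or $y_{22} = 1$, so no such generator lies in any outlier-generated submodule, and hence neither does the cyclic submodule it generates.

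For the converse, I first compute explicit descriptions of the outlier-generated submodules by a direct matrix-product calculation: $T(q)\bigl(\bigl[\begin{smallmatrix}1 & 0\\0 & 0\end{smallmatrix}\bigr], \bigl[\begin{smallmatrix}0 & 0\\1 & 0\end{smallmatrix}\bigr]\bigr)$ is the set of pairs $(P, Q)$ with last columns zero and $q_{11} = 0$, while $T(q)\bigl(\bigl[\begin{smallmatrix}k & 0\\1 & 0\end{smallmatrix}\bigr], \bigl[\begin{smallmatrix}1 & 0\\0 & 0\end{smallmatrix}\bigr]\bigr)$ is the set of pairs with last columns zero and $p_{11} = k q_{11}$. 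I then invoke a uniform feature of Remark \ref{nonfree}: every non-affine non-free cyclic submodule has a generator $(X, Y)$ with $x_{22} = y_{22} = 0$, forcing its last columns to vanish. Such a generator satisfies either $y_{11} = 0$ (and then $(X, Y)$ lies in the first outlier-generated submodule above) or $y_{11} \neq 0$ (and then $(X, Y)$ lies in the outlier-generated submodule of slope $k = x_{11}y_{11}^{-1}$); since outlier-generated submodules are closed under left $T(q)$-multiplication, membership of the generator implies containment of the entire cyclic submodule. The main effort is routine bookkeeping: verifying the two descriptions above by multiplying out the generators, and checking across the four order-$q^2$ cases of Remark \ref{nonfree} that the generator does have zero $(2,2)$-entries, which is precisely what makes the uniform slope argument close each case.
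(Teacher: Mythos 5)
Your proof is correct. The forward direction --- that no submodule of type (\ref{affine}) lies in an outlier-generated free cyclic submodule --- is essentially the paper's own argument, repackaged as the invariance of vanishing $(2,2)$-entries under left multiplication by lower triangular matrices. Where you genuinely diverge is in the converse. The paper disposes of the zero submodule and the type-(\ref{projective}) submodules via the same entrywise computation, then separately counts the cyclic submodules of order $q^2$, lists the $q+1$ of them explicitly, and matches each one by hand to a specific non-unimodular free cyclic submodule containing it. You instead describe each of the $q+1$ outlier-generated submodules of Theorem \ref{nfcs} in closed form (pairs with vanishing last columns satisfying $q_{11}=0$, respectively $p_{11}=kq_{11}$), which in effect shows that their union is exactly the set of all pairs whose last columns vanish; since every non-free cyclic submodule outside type (\ref{affine}) has, by Remark \ref{nonfree}, a generator of precisely this shape, the single slope computation $k=x_{11}y_{11}^{-1}$ (together with the degenerate case $y_{11}=0$) settles the order-$1$, type-(\ref{projective}) and all four order-$q^2$ families at once. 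Your version buys uniformity and makes transparent why exactly the type-(\ref{affine}) submodules are the ones excluded (they are the only non-free cyclic submodules without a generator supported off the last column); the paper's version, in exchange, produces the explicit list of order-$q^2$ submodules and their individual containments, data it leans on later (e.g.\ in the proof of Theorem \ref{projective.plane}). Both arguments rest on the same two inputs, the classification in Remark \ref{nonfree} and the list in Theorem \ref{nfcs}, and I see no gap in yours.
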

\begin{proof}
Let a submodule $T(q)\left(\left[\begin{array}{cclr}
x_{11}&0\\ x_{21}&0\end{array}\right],\left[\begin{array}{cclr}
y_{11}&0\\y_{21}&0\end{array}\right]\right)\subset {^2T(q)}$ be free, then the pair 
$\left(\left[\begin{array}{cclr}
a_{11}x_{11}&0\\ a_{21}x_{11}+a_{22}x_{21}&0\end{array}\right],\left[\begin{array}{cclr}
a_{11}y_{11}&0\\ a_{21}y_{11}+a_{22}y_{21}&0\end{array}\right]\right)$ is equal to the pair $\left(\left[\begin{array}{cclr}
0&0\\ p_{21}&p_{22}\end{array}\right],\left[\begin{array}{cclr}
0&0\\r_{21}&r_{22}\end{array}\right]\right)$ if, and only if, $a_{11}=0, p_{22}=r_{22}=0,$ $p_{21}=a_{21}x_{11}+a_{22}x_{21}, r_{21}=a_{21}y_{11}+a_{22}y_{21}$,
thus any submodule of type (\ref{affine}) is not contained in any non-unimodular free cyclic submodule, and any submodule of type (\ref{projective}) is contained in all non-unimodular free cyclic submodules. 

Of course, $T(q)\left(\left[\begin{array}{cclr}
0&0\\ 0&0\end{array}\right],\left[\begin{array}{cclr}
0&0\\0&0\end{array}\right]\right)\subset {^2T(q)}$ is contained in all cyclic submodules of ${^2T(q)}$. 
From Remark \ref{nonfree}(\ref{rankq2}) it follows that the number of all pairs  generating cyclic submodules of ${^2T(q)}$ of order $q^2$ is $q(q^2-1)$ and any such submodule is generated by $q(q-1)$ distinct pairs. Hence the number of all distinct cyclic submodules of ${^2T(q)}$ of order $q^2$ is $(q+1)$. 
By simple calculations we get that 
$T(q)\left(\left[\begin{array}{cclr}
1&0\\ 0&0\end{array}\right],\left[\begin{array}{cclr}
0&0\\0&0\end{array}\right]\right), T(q)\left(\left[\begin{array}{cclr}
k&0\\ 0&0\end{array}\right],\left[\begin{array}{cclr}
1&0\\0&0\end{array}\right]\right),$ 
where $k$ runs through all the elements of $F(q)$, are all distinct cyclic submodules  of ${^2T(q)}$ of order $q^2$. 
For any $k\in F(q)$ they are contained in non-unimodular free cyclic submodules 
$T(q)\left(\left[\begin{array}{cclr}
1&0\\ 0&0\end{array}\right],\left[\begin{array}{cclr}
0&0\\1&0\end{array}\right]\right)$,
$T(q)\left(\left[\begin{array}{cclr}
k&0\\ 1&0\end{array}\right],\left[\begin{array}{cclr}
1&0\\0&0\end{array}\right]\right)$ of ${^2T(q)}$, respectively. It can be shown that these are the only possibilities for submodules of order  $q^2$ to lie in non-unimodular free cyclic submodules of ${^2T(q)}$.
According to Remark \ref{nonfree} there is no other  non-free cyclic submodule of ${^2T(q)}$, which completes the proof.
\end{proof}

\begin{theorem}\label{points}
There are exactly $q(q+1)^2$ points of the projective line $\mathbb{P}(T(q))$. They can be presented as $q+1$ following  sets:

\vspace*{0.3cm}

the first set 

\vspace*{0.3cm}

$\left\{T(q)\left(I,\left[\begin{array}{cclr}
y_{11}&0\\y_{21}&0\end{array}\right]\right), T(q)\left(\left[\begin{array}{cclr}
0&0\\ x_{21}&1\end{array}\right],\left[\begin{array}{cclr}
1&0\\0&0\end{array}\right]\right); x_{21}, y_{11}, y_{21}\in F(q)\right\}$;

\vspace*{0.3cm}

and $q$ $k$-sets

\vspace*{0.3cm}

$\left\{T(q)\left(\left[\begin{array}{cclr}
x_{11}&0\\ x_{21}&k\end{array}\right],I\right), T(q)\left(\left[\begin{array}{cclr}
1&0\\ 0&k\end{array}\right],\left[\begin{array}{cclr}
0&0\\y_{21}&1\end{array}\right]\right); x_{11}, x_{21}, y_{21}\in F(q)\right\}$, 

\vspace*{0.3cm}

where $k$ runs through all the elements of $F(q)$ and $I$ denotes the identity matrix.

Any submodule of the first set of type (\ref{affine}) is contained exclusively in some free cyclic submodules of the first set, and any submodule of the $k$-set of the type (\ref{affine}) is contained exclusively in some free cyclic submodules of the $k$-set.
\end{theorem}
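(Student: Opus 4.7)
My plan is to introduce a single projective invariant that organises the entire classification. To each point $T(q)(X,Y) \in \mathbb{P}(T(q))$ I would associate the element $[x_{22} : y_{22}] \in \mathbb{P}^1(F(q))$: unimodularity forces $(x_{22}, y_{22}) \neq (0,0)$, and left multiplication of any representative by $A \in T^*(q)$ rescales both entries by the common nonzero scalar $a_{22}$, so the ratio depends only on the point. This invariant takes exactly $q+1$ values, matching the $q+1$ announced sets: the first set of the theorem corresponds to $[1:0]$ (the value $y_{22} = 0$), and the $k$-set to $[k:1]$.

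For the count, I would observe that $(X,Y)$ is unimodular iff $(x_{ii}, y_{ii}) \neq (0,0)$ for $i = 1, 2$, producing $(q^2 - 1)^2 q^2$ unimodular pairs (the factor $q^2$ from the free off-diagonal entries); dividing by $|T^*(q)| = (q-1)^2 q$ via Remark \ref{generators.of.fcs} yields $q(q+1)^2$. Then I would show that each fiber of the invariant is exhausted by the two listed canonical forms. In the fiber over $[k:1]$: if $y_{11} \neq 0$ then $Y$ is invertible and $(X,Y) \sim (Y^{-1}X, I)$, whose $(2,2)$-entry equals $k$, yielding the first form; if $y_{11} = 0$ then $x_{11} \neq 0$ by unimodularity, and a direct computation singles out a unique $A \in T^*(q)$ (with $a_{11} = x_{11}^{-1}$, $a_{22} = y_{22}^{-1}$, and $a_{21}$ chosen to cancel the off-diagonal) bringing $(X,Y)$ to the second form. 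The fiber over $[1:0]$ is handled symmetrically. Distinctness within each fiber is immediate: the stabilizer in $T^*(q)$ of each listed canonical pair is trivial, so distinct parameters give distinct points; distinctness across fibers is automatic. Each fiber therefore contains exactly $q^2 + q$ points, multiplying to the stated total.

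The final assertion then follows directly from invariant-matching. If $M$ is a type-(\ref{affine}) submodule of the $k$-set of Proposition 2, its canonical generator $m = (X_m, Y_m)$ satisfies $(X_m)_{22} = k$ and $(Y_m)_{22} = 1$. If $M \subseteq T(q)(X,Y) = P$ for some point $P$, then $m = A(X, Y)$ for some $A \in T(q)$, forcing $a_{22} x_{22} = k$ and $a_{22} y_{22} = 1$; hence $y_{22} \neq 0$ and $x_{22} y_{22}^{-1} = k$, so the invariant of $P$ is $[k:1]$ and $P$ lies in the $k$-set of the theorem. The first-set case is identical with $[k:1]$ replaced by $[1:0]$. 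The main obstacle I anticipate is the bookkeeping of the sub-case split (invertibility of $Y$ versus vanishing of $y_{11}$) required to certify completeness of the two canonical forms within each fiber; organised around the invariant, the underlying calculations are routine matrix arithmetic.
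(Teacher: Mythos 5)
Your proposal is correct, and its computational core coincides with the paper's, but it is packaged differently enough to be worth contrasting. The paper imports the count $q(q+1)^2$ from \cite[Corollary 1]{graph}, whereas you re-derive it by counting the $(q^2-1)^2q^2$ unimodular pairs and dividing by $|T^*(q)|=(q-1)^2q$ via Remark \ref{generators.of.fcs}, which makes the theorem self-contained. Your invariant $[x_{22}:y_{22}]$ is precisely the content of the paper's Corollary \ref{distinct.set.points}, which the authors only extract \emph{after} the theorem; promoting it to the organizing principle lets you obtain the disjointness of the $q+1$ sets and the final containment assertion in one stroke, where the paper instead writes out the full system $A(X,Y)=(P,R)$ and reads off $x_{22}\in F^*(q),\ y_{22}=0$, respectively $y_{22}\in F^*(q),\ k=y_{22}^{-1}x_{22}$, from it. Your normalization argument ($Y$ invertible versus $y_{11}=0$) supplies the exhaustion of the canonical forms, a step the paper dismisses as ``easy to show''. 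Two small points to tighten. First, triviality of the stabilizer of a canonical pair shows its orbit has full size, not by itself that two \emph{distinct} canonical pairs lie in distinct orbits; either check directly that $U(X,Y)$ again canonical forces $U=I$ (a two-line computation), or observe that exhaustion together with your global count $q(q+1)^2=(q+1)(q^2+q)$ already forces the $q^2+q$ listed forms in each fibre to be pairwise distinct. Second, ``contained exclusively in some free cyclic submodules of the first set'' also carries an existence claim; your invariant-matching only gives the ``only in'' direction, so you should add (as the paper's explicit solution of the system does, and as Example \ref{first.set} illustrates) that each type (\ref{affine}) submodule of a given set is indeed contained in at least one point of that set.
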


\begin{proof} 
According to \cite[Corollary 1]{graph} there are $q(q+1)^2$ unimodular free cyclic submodules.
It is easy to show that the above-listed sets of points of  $\mathbb{P}(T(q))$ are pairwise disjoint and each of them contains $q^2+q$ distinct free cyclic submodules.

Let $\left(\left[\begin{array}{cclr}
x_{11}&0\\ x_{21}&x_{22}\end{array}\right],\left[\begin{array}{cclr}
y_{11}&0\\ y_{21}&y_{22}\end{array}\right]\right)\subset{^2T(q)}$ be a unimodular pair. Then the pair
$\left(\left[\begin{array}{cclr}
a_{11}x_{11}&0\\ a_{21}x_{11}+a_{22}x_{21}&a_{22}x_{22}\end{array}\right],\left[\begin{array}{cclr}
a_{11}y_{11}&0\\a_{21}y_{11}+a_{22}y_{21}&a_{22}y_{22}\end{array}\right]\right)$ is equal to the pair $\left(\left[\begin{array}{cclr}
0&0\\ p_{21}&1\end{array}\right],\left[\begin{array}{cclr}
0&0\\r_{21}&0\end{array}\right]\right)$ if, and only if, $a_{11}=0, x_{22}\in F^*(q),$\linebreak $y_{22}=0, a_{22}=x_{22}^{-1}, p_{21}=a_{21}x_{11}+x_{22}^{-1}x_{21}, r_{21}=a_{21}y_{11}+x_{22}^{-1}y_{21}$, 
and similarly the pair 
$\left(\left[\begin{array}{cclr}
a_{11}x_{11}&0\\ a_{21}x_{11}+a_{22}x_{21}&a_{22}x_{22}\end{array}\right],\left[\begin{array}{cclr}
a_{11}y_{11}&0\\a_{21}y_{11}+a_{22}y_{21}&a_{22}y_{22}\end{array}\right]\right)$ is equal to the pair $\left(\left[\begin{array}{cclr}
0&0\\ p_{21}&k\end{array}\right],\left[\begin{array}{cclr}
0&0\\r_{21}&1\end{array}\right]\right)$ if, and only if, $a_{11}=0, y_{22}\in F^*(q),$\linebreak $a_{22}=y_{22}^{-1}, k=y_{22}^{-1}x_{22}, p_{21}=a_{21}x_{11}+y_{22}^{-1}x_{21}, r_{21}=a_{21}y_{11}+y_{22}^{-1}y_{21}$.

Therefore any submodule of type $(\ref{affine})$ from a given set is contained in the corresponding set of points of the projective line $\mathbb{P}(T(q))$.
\end{proof}

\begin{cor}\label{distinct.set.points}
Two unimodular free cyclic submodules  of the form\break $T(q)\left(\left[\begin{array}{cclr}
x_{11}&0\\ x_{21}&x_{22}\end{array}\right],\left[\begin{array}{cclr}
y_{11}&0\\ y_{21}&y_{22}\end{array}\right]\right)$, $T(q)\left(\left[\begin{array}{cclr}
w_{11}&0\\ w_{21}&w_{22}\end{array}\right],\left[\begin{array}{cclr}
z_{11}&0\\ z_{21}&z_{22}\end{array}\right]\right)$ are\linebreak in the same set of points of  $\mathbb{P}(T(q))$ if, and only if, $x_{22}=w_{22}, y_{22}=z_{22}$. 
\end{cor}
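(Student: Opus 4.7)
The approach is to exhibit the pair $(x_{22},y_{22})$ as (essentially) a projective invariant that classifies the $q+1$ sets appearing in Theorem~\ref{points}. First I would record how $(x_{22},y_{22})$ transforms under a change of generator: by Remark~\ref{generators.of.fcs}, any two unimodular generators of the same free cyclic submodule differ by left multiplication by some invertible $U\in T(q)$, and since the $(2,2)$-entry of a product $UX$ in $T(q)$ is simply $u_{22}x_{22}$, the pair $(x_{22},y_{22})$ is replaced by $(u_{22}x_{22},u_{22}y_{22})$ with $u_{22}\in F^*(q)$. Consequently the projective class $[x_{22}:y_{22}]\in\mathbb{P}^1(F(q))$ is a well-defined invariant of the submodule.

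Second, I would read this invariant off the canonical representatives listed in Theorem~\ref{points}. Those of the first set satisfy $(x_{22},y_{22})=(1,0)$, giving invariant $[1:0]$, while those of the $k$-set satisfy $(x_{22},y_{22})=(k,1)$, giving invariant $[k:1]$. As $k$ ranges over $F(q)$ these produce exactly the $q+1$ distinct classes of $\mathbb{P}^1(F(q))$, so the correspondence ``set $\mapsto$ invariant'' is a bijection.

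The corollary then follows in both directions. If two chosen generators satisfy $x_{22}=w_{22}$ and $y_{22}=z_{22}$, the two submodules share a common invariant and hence lie in the same set. Conversely, any two submodules in the same set can be written via Theorem~\ref{points} in canonical form with identical bottom-right entries. The only subtle point, which is more interpretive than technical, is that literal equality of $(x_{22},y_{22})$ across \emph{arbitrary} representatives of the same submodule is not forced by sameness of set (only proportionality is), so the ``only if'' direction is most naturally read as asserting the existence of suitable generators realising the stated equality, which is precisely what the canonical forms of Theorem~\ref{points} supply.
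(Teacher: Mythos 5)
Your proof is correct and matches the paper's (implicit) argument: the corollary is stated without proof as a direct consequence of Theorem \ref{points}, and your identification of $[x_{22}:y_{22}]$ as the invariant separating the $q+1$ sets---via the transformation rule $(x_{22},y_{22})\mapsto(u_{22}x_{22},u_{22}y_{22})$, $u_{22}\in F^*(q)$, supplied by Remark \ref{generators.of.fcs}, together with the observation that the canonical generators of Theorem \ref{points} realise exactly the $q+1$ classes $[1:0]$ and $[k:1]$---is precisely the reasoning the paper relies on. Your caveat about the ``only if'' direction is also well taken: for arbitrary unimodular representatives sameness of set forces only proportionality of $(x_{22},y_{22})$ and $(w_{22},z_{22})$, so the stated literal equality should indeed be read against the canonical forms of Theorem \ref{points}, which is the charitable interpretation the paper intends.
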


\begin{ex}\label{first.set}
Let $T(q)\left(\left[\begin{array}{cclr}
1&0\\ 0&1\end{array}\right],\left[\begin{array}{cclr}
y_{11}&0\\y_{21}&0\end{array}\right]\right), T(q)\left(\left[\begin{array}{cclr}
0&0\\ x_{21}&1\end{array}\right],\left[\begin{array}{cclr}
1&0\\0&0\end{array}\right]\right)$ be points of $\mathbb{P}(T(q))$ and let $T(q)\left(\left[\begin{array}{cclr}
0&0\\ p_{21}&1\end{array}\right],\left[\begin{array}{cclr}
0&0\\r_{21}&0\end{array}\right]\right)$ be submodules of type (\ref{affine}). Then
$$\left(\left[\begin{array}{cclr}
a_{11}&0\\ a_{21}&a_{22}\end{array}\right],\left[\begin{array}{cclr}
a_{11}y_{11}&0\\a_{21}y_{11}+a_{22}y_{21}&0\end{array}\right]\right)=\left(\left[\begin{array}{cclr}
0&0\\ p_{21}&1\end{array}\right],\left[\begin{array}{cclr}
0&0\\r_{21}&0\end{array}\right]\right)$$ if, and only if, $a_{11}=0, a_{22}=1, a_{21}=p_{21}, r_{21}=p_{21}y_{11}+y_{21}$ for any $p_{21}\in F(q)$, and 

$$\left(\left[\begin{array}{cclr}
0&0\\ a_{22}x_{21}&a_{22}\end{array}\right],\left[\begin{array}{cclr}
a_{22}&0\\a_{21}&0\end{array}\right]\right)=\left(\left[\begin{array}{cclr}
0&0\\ p_{21}&1\end{array}\right],\left[\begin{array}{cclr}
0&0\\r_{21}&0\end{array}\right]\right)$$ if, and only if, $a_{11}=0, a_{22}=1, a_{21}=r_{21}, p_{21}=x_{21}$ for any $r_{21}\in F(q)$.

Hence
$$T(q)\left(\left[\begin{array}{cclr}
0&0\\ p_{21}&1\end{array}\right],\left[\begin{array}{cclr}
0&0\\p_{21}y_{11}+y_{21}&0\end{array}\right]\right)\subset T(q)\left(\left[\begin{array}{cclr}
1&0\\ 0&1\end{array}\right],\left[\begin{array}{cclr}
y_{11}&0\\y_{21}&0\end{array}\right]\right)$$ for any $p_{21}, y_{11}, y_{21}\in F(q)$, and

$$T(q)\left(\left[\begin{array}{cclr}
0&0\\ x_{21}&1\end{array}\right],\left[\begin{array}{cclr}
0&0\\r_{21}&0\end{array}\right]\right)\subset T(q)\left(\left[\begin{array}{cclr}
0&0\\ x_{21}&1\end{array}\right],\left[\begin{array}{cclr}
1&0\\0&0\end{array}\right]\right)$$ for any $x_{21}, r_{21}\in F(q)$, and by the proof of Proposition \ref{points} these are the only submodules of type (\ref{affine})  contained in the considered points of $\mathbb{P}(T(q))$.

\end{ex}

\begin{theorem}\label{affine.plane}
Let us regard cyclic submodules contained in the set $$\left\{T(q)\left(\left[\begin{array}{cclr}
0&0\\ p_{21}&p_{22}\end{array}\right]\left[\begin{array}{cclr}
0&0\\ r_{21}&r_{22}\end{array}\right]\right); p_{21}, r_{21}\in F(q)\right\},$$ where  $p_{22}, r_{22}$ are fixed elements of $F(q)$ such that $p_{22}\neq 0 \vee  r_{22}\neq 0$, as points and free cyclic submodules containing them as lines. 

Then these points and lines form a point-line incidence structure isomorphic to the {\it affine} plane of order $q$.
\end{theorem}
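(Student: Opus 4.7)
First I would pin down what the lines of the alleged affine plane really are. Each of the $q^{2}$ alleged points is a submodule of type (\ref{affine}); being of order $q$ it could in principle sit inside a free cyclic submodule of either unimodular or non-unimodular type, but Proposition \ref{crucial} together with Theorem \ref{nfcs} rules out the non-unimodular alternative. Hence every line is a point of $\mathbb{P}(T(q))$, and by the final assertion of Theorem \ref{points} it must lie in the same set of $\mathbb{P}(T(q))$ as the fixed $(p_{22},r_{22})$. That set has $q^{2}+q=q(q+1)$ elements, matching the line count of an affine plane of order $q$.

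Next I would compute the incidences. By the preceding proposition, after rescaling the generator by a unit from $F^{\ast}(q)$ we may assume $(p_{22},r_{22})=(1,0)$ (the first-set case) or $(p_{22},r_{22})=(k,1)$ for some $k\in F(q)$ (a $k$-set case). The first-set case is essentially carried out in Example \ref{first.set}: the line $T(q)(I,[\begin{smallmatrix}y_{11}&0\\ y_{21}&0\end{smallmatrix}])$ passes through the $q$ points with $r_{21}=y_{11}p_{21}+y_{21}$, and $T(q)([\begin{smallmatrix}0&0\\ x_{21}&1\end{smallmatrix}],[\begin{smallmatrix}1&0\\ 0&0\end{smallmatrix}])$ through the $q$ points with $p_{21}=x_{21}$. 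For a $k$-set I would run the analogous calculation on the two parametric families of points of $\mathbb{P}(T(q))$ from Theorem \ref{points}; matching the diagonal entries with $(p_{22},r_{22})=(k,1)$ forces $a_{11}=0$ and $a_{22}=1$, and the off-diagonal equations collapse to $p_{21}=x_{11}r_{21}+x_{21}$ for the first family and $r_{21}=y_{21}$ for the second.

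It then remains to identify the resulting incidence structure with the standard affine plane $AG(2,q)$. Taking $(p_{21},r_{21})\in F(q)^{2}$ as Cartesian coordinates, the lines in the first set are the ``non-vertical'' lines $r=y_{11}p+y_{21}$ together with the $q$ ``vertical'' lines $p=x_{21}$, while those in a $k$-set are the ``non-horizontal'' lines $p=x_{11}r+x_{21}$ together with the $q$ ``horizontal'' lines $r=y_{21}$. Both descriptions coincide with $AG(2,q)$ up to a swap of the two coordinate axes, so axioms A1--A3 of Definition \ref{aff} follow at once from the standard field-theoretic arguments.

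The only genuine work is the $k$-set bookkeeping in step two: one has to solve $A(X,Y)=(X',Y')$ for each parametric family and check that the constraints imposed by the diagonal entries $k$ and $1$ leave the off-diagonal equations affine-linear in the line parameters. Once this linearity is verified, the identification with $AG(2,q)$, and hence the verification of axioms A1--A3, is automatic.
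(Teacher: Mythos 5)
Your proposal is correct, and its computational core --- determining which order-$q$ submodules lie in which points of $\mathbb{P}(T(q))$ via the calculation of Example \ref{first.set} and its $k$-set analogue --- is the same as the paper's. Where you genuinely diverge is in how the affine-plane axioms are then obtained. The paper verifies {\bf A1}, {\bf A2} and {\bf A3} directly: it solves $(s_{21}-p_{21})y_{11}=t_{21}-r_{21}$ for uniqueness of the joining line, runs an explicit parallel-class argument for {\bf A2}, and derives {\bf A3} from a six-equation linear system shown to be inconsistent. You instead coordinatize: the $q^{2}$ points become $F(q)^{2}$ via $(p_{21},r_{21})$, the $q^{2}+q$ lines become exactly the affine lines $r=y_{11}p+y_{21}$ and $p=x_{21}$ (respectively $p=x_{11}r+x_{21}$ and $r=y_{21}$ in a $k$-set), and the axioms are inherited from $AG(2,q)$. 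This is shorter and buys slightly more: it identifies the plane as the Desarguesian plane $AG(2,q)$, which the paper's axiom-checking alone does not do (a relevant distinction once $q\geq 9$, where non-Desarguesian planes of order $q$ exist). Your $k$-set bookkeeping checks out against the computations already recorded in the proof of Theorem \ref{points}: with the normalized generators there one has $y_{22}=1$, so the constraint $a_{22}=y_{22}^{-1}$ indeed gives $a_{22}=1$ and the stated affine-linear incidence relations. The only point worth making explicit in a final write-up is that distinct points of $\mathbb{P}(T(q))$ in the given set induce distinct line-sets of $F(q)^{2}$ (immediate from the parametrization in Theorem \ref{points}), so that the correspondence is a genuine isomorphism of incidence structures and {\bf A1} cannot fail through two ``lines'' carrying the same point set.
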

\begin{proof}
Consider the set $\left\{T(q)\left(\left[\begin{array}{cclr}
0&0\\ p_{21}&1\end{array}\right]\left[\begin{array}{cclr}
0&0\\ r_{21}&0\end{array}\right]\right); p_{21}, r_{21}\in F(q)\right\}$.

Suppose that $T(q)\left(\left[\begin{array}{cclr}
0&0\\ p_{21}&1\end{array}\right]\left[\begin{array}{cclr}
0&0\\ r_{21}&0\end{array}\right]\right), T(q)\left(\left[\begin{array}{cclr}
0&0\\ s_{21}&1\end{array}\right]\left[\begin{array}{cclr}
0&0\\ t_{21}&0\end{array}\right]\right)$\linebreak are contained in the point $ T(q)\left(\left[\begin{array}{cclr}
1&0\\ 0&1\end{array}\right],\left[\begin{array}{cclr}
y_{11}&0\\y_{21}&0\end{array}\right]\right)\in  \mathbb{P}(T(q))$.
By Example \ref{first.set} this is equivalent to saying that  $p_{21}y_{11}+y_{21}=r_{21}$ and $s_{21}y_{11}+y_{21}=t_{21}$. Consequently $(s_{21}-p_{21})y_{11}=t_{21}-r_{21}$.

If $s_{21}=p_{21}$, then $t_{21}=r_{21}$, so the considered submodules are not distinct.
If $s_{21}\neq p_{21}$, then $y_{11}=(s_{21}-p_{21})^{-1}(t_{21}-r_{21})$. Thus  there exists exactly one $y_{11}$ and exactly one $y_{21}$ for fixed $p_{21}, r_{21}, s_{21}, t_{21}\in F(q)$.

Let now that
$T(q)\left(\left[\begin{array}{cclr}
0&0\\ p_{21}&1\end{array}\right]\left[\begin{array}{cclr}
0&0\\ r_{21}&0\end{array}\right]\right), T(q)\left(\left[\begin{array}{cclr}
0&0\\ s_{21}&1\end{array}\right]\left[\begin{array}{cclr}
0&0\\ t_{21}&0\end{array}\right]\right)$ be contained in the point $ T(q)\left(\left[\begin{array}{cclr}
0&0\\ x_{21}&1\end{array}\right],\left[\begin{array}{cclr}
1&0\\0&0\end{array}\right]\right)\in  \mathbb{P}(T(q))$.

By Example \ref{first.set} this is equivalent to saying that  $p_{21}=s_{21}=x_{21}$. 

This shows that two distinct submodules of the considered set are contained in exactly one point of  $\mathbb{P}(T(q))$.

By using results of Example \ref{first.set} again we get that
any submodule of the considered set not contained in the point $T(q)\left(\left[\begin{array}{cclr}
1&0\\ 0&1\end{array}\right],\left[\begin{array}{cclr}
y_{11}&0\\y_{21}&0\end{array}\right]\right)\in  \mathbb{P}(T(q))$ is of the form $T(q)\left(\left[\begin{array}{cclr}
0&0\\ p_{21}&1\end{array}\right],\left[\begin{array}{cclr}
0&0\\p_{21}y_{11}+x'_{21}&0\end{array}\right]\right)$, where $x'_{21}, p_{21}\in F(q)$ and $ x'_{21}\neq y_{21}$. 

We get also that
elements of the first set of points of  $\mathbb{P}(T(q))$ which do not contain the submodule $T(q)\left(\left[\begin{array}{cclr}
0&0\\ p_{21}&1\end{array}\right],\left[\begin{array}{cclr}
0&0\\p_{21}y_{11}+y_{21}&0\end{array}\right]\right)$ for any $p_{21}\in F(q)$ are of the form $T(q)\left(\left[\begin{array}{cclr}
1&0\\ 0&1\end{array}\right],\left[\begin{array}{cclr}
y_{11}&0\\y'_{21}&0\end{array}\right]\right)$, where $y'_{21}\in F(q)$ and $y'_{21}\neq y_{21}$. 

Of course, exactly one of them, i.e., $T(q)\left(\left[\begin{array}{cclr}
1&0\\ 0&1\end{array}\right],\left[\begin{array}{cclr}
y_{11}&0\\x'_{21}&0\end{array}\right]\right)\in  \mathbb{P}(T(q))$\linebreak contains $T(q)\left(\left[\begin{array}{cclr}
0&0\\ p_{21}&1\end{array}\right],\left[\begin{array}{cclr}
0&0\\p_{21}y_{11}+x'_{21}&0\end{array}\right]\right)\subset  {^2T(q)}$.

Let $x_{21},y_{21} \in F(q)$ and $y_{21}\neq x_{21}$. In the same manner we get that there exists a unique point  $T(q)\left(\left[\begin{array}{cclr}
0&0\\ y_{21}&1\end{array}\right],\left[\begin{array}{cclr}
1&0\\0&0\end{array}\right]\right)\in  \mathbb{P}(T(q))$, which contains the submodule $T(q)\left(\left[\begin{array}{cclr}
0&0\\ y_{21}&1\end{array}\right],\left[\begin{array}{cclr}
0&0\\r_{21}&0\end{array}\right]\right)\subset {^2T(q)}$ and does not contain the submodule $T(q)\left(\left[\begin{array}{cclr}
0&0\\ x_{21}&1\end{array}\right],\left[\begin{array}{cclr}
0&0\\r_{21}&0\end{array}\right]\right)$ for any $r_{21}\in F(q)$. 

To sum up, given any point $T(q)\left(\left[\begin{array}{cclr}
x_{11}&0\\ x_{21}&1\end{array}\right],\left[\begin{array}{cclr}
y_{11}&0\\ y_{21}&0\end{array}\right]\right)\in\mathbb{P}(T(q))$ of the first set and any submodule $T(q)\left(\left[\begin{array}{cclr}
0&0\\ p_{21}&1\end{array}\right]\left[\begin{array}{cclr}
0&0\\ r_{21}&0\end{array}\right]\right)\subset {^2T(q)}$, which is not contained in that point of  $\mathbb{P}(T(q))$, there exists a unique point\break $T(q)\left(\left[\begin{array}{cclr}
x'_{11}&0\\ x'_{21}&1\end{array}\right],\left[\begin{array}{cclr}
y'_{11}&0\\ y'_{21}&0\end{array}\right]\right)\in\mathbb{P}(T(q))$, which contains that submodule and does not contain any submodule of type $(\ref{affine})$, which is contained in $T(q)\left(\left[\begin{array}{cclr}
x_{11}&0\\ x_{21}&1\end{array}\right],\left[\begin{array}{cclr}
y_{11}&0\\ y_{21}&0\end{array}\right]\right)\in\mathbb{P}(T(q))$.

Suppose that the following three submodules: $T(q)\left(\left[\begin{array}{cclr}
0&0\\ 0&1\end{array}\right],\left[\begin{array}{cclr}
0&0\\0&0\end{array}\right]\right),$\break $ T(q)\left(\left[\begin{array}{cclr}
0&0\\ 0&1\end{array}\right],\left[\begin{array}{cclr}
0&0\\1&0\end{array}\right]\right),T(q)\left(\left[\begin{array}{cclr}
0&0\\ 1&1\end{array}\right],\left[\begin{array}{cclr}
0&0\\0&0\end{array}\right]\right)$ are contained in a free cyclic submodule $T(q)\left(\left[\begin{array}{cclr}
x_{11}&0\\ x_{21}&1\end{array}\right],\left[\begin{array}{cclr}
y_{11}&0\\ y_{21}&0\end{array}\right]\right)\in  \mathbb{P}(T(q))$. Then there exist $a_{21}, b_{21}, c_{21}\in F(q)$ such that
$$\begin{cases}
1.\ a_{21}x_{11}+x_{21}=0.\\2.\ a_{21}y_{11}+y_{21}=0.\\3.\ b_{21}x_{11}+x_{21}=0.\\4.\ b_{21}y_{11}+y_{21}=1.\\5.\ c_{21}x_{11}+x_{21}=1.\\6.\ c_{21}y_{11}+y_{21}=0.\end{cases}$$ Thus $(b_{21}-a_{21})x_{11}=0, (b_{21}-a_{21})y_{11}=1$. Hence $b_{21}\neq a_{21}, x_{11}=0$ and $x_{21}=-a_{21}x_{11}=0$. But then we get $c_{21}x_{11}+x_{21}=0$, which contradicts   equation (5) and thereby it contradicts the assumption that the three above  submodules are contained in the same point of  $\mathbb{P}(T(q))$.  
It means that there exist three submodules $T(q)\left(\left[\begin{array}{cclr}
0&0\\ p_{21}&1\end{array}\right]\left[\begin{array}{cclr}
0&0\\ r_{21}&0\end{array}\right]\right)\subset {^2T(q)}$ not contained in the same point of  $\mathbb{P}(T(q))$.

Moreover, any point $\mathbb{P}(T(q))$ of the first set contains exactly $q$ submodules of the form $T(q)\left(\left[\begin{array}{cclr}
0&0\\ p_{21}&1\end{array}\right]\left[\begin{array}{cclr}
0&0\\ r_{21}&0\end{array}\right]\right)\subset {^2T(q)}$, what follows directly from Example \ref{first.set}.  

We have just shown that all axioms of an affine plane are satisfied for the considered set.

By the same methods it follows that submodules of type $(\ref{affine})$ from a given $k$-set and points of  $\mathbb{P}(T(q))$ containing them give a point-line incidence structure isomorphic to the affine plane of order $q$. So, there are altogether $q+1$ isomorphic affine planes of order $q$ associated with the projective line  $\mathbb{P}(T(q))$.
\end{proof} 

\begin{cor}\label{planes.classes.lines}
\begin{enumerate}
\item\label{planes} Two unimodular free cyclic submodules generated by pairs $\left(\left[\begin{array}{cclr}
x_{11}&0\\ x_{21}&x_{22}\end{array}\right],\left[\begin{array}{cclr}
y_{11}&0\\ y_{21}&y_{22}\end{array}\right]\right)$, $\left(\left[\begin{array}{cclr}
w_{11}&0\\ w_{21}&w_{22}\end{array}\right],\left[\begin{array}{cclr}
z_{11}&0\\ z_{21}&z_{22}\end{array}\right]\right)$ represent lines of the same affine plane associated with  $\mathbb{P}(T(q))$ if, and only if, $x_{22}=w_{22}, y_{22}=z_{22}.$
\item\label{classes} Two unimodular free cyclic submodules generated by pairs of the form
$\left(\left[\begin{array}{cclr}
x_{11}&0\\ x_{21}&x_{22}\end{array}\right],\left[\begin{array}{cclr}
y_{11}&0\\ y_{21}&y_{22}\end{array}\right]\right)$, $\left(\left[\begin{array}{cclr}
w_{11}&0\\ w_{21}&x_{22}\end{array}\right],\left[\begin{array}{cclr}
z_{11}&0\\ z_{21}&y_{22}\end{array}\right]\right)$ represent the same class of parallel lines of an affine plane associated with  $\mathbb{P}(T(q))$ if, and only if, $x_{11}=w_{11}, y_{11}=z_{11}.$
\item\label{lines} Two unimodular free cyclic submodules  generated by pairs of the form $\left(\left[\begin{array}{cclr}
x_{11}&0\\ x_{21}&x_{22}\end{array}\right],\left[\begin{array}{cclr}
y_{11}&0\\ y_{21}&y_{22}\end{array}\right]\right)$, $\left(\left[\begin{array}{cclr}
x_{11}&0\\ w_{21}&x_{22}\end{array}\right],\left[\begin{array}{cclr}
y_{11}&0\\ z_{21}&y_{22}\end{array}\right]\right)$ represent distinct parallel lines of an affine plane associated with  $\mathbb{P}(T(q))$ if, and only if, $x_{21}\neq w_{21}$ or $y_{21}\neq z_{21}.$
\end{enumerate}
\end{cor}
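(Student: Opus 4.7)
My plan is to reduce all three parts to the descriptions of points of $\mathbb{P}(T(q))$ and of their contained affine submodules already established in Theorem \ref{points}, Corollary \ref{distinct.set.points}, Example \ref{first.set} (and its obvious analog for the $k$-sets), together with Theorem \ref{affine.plane}. Part (\ref{planes}) is immediate: Theorem \ref{affine.plane} puts the $q+1$ affine planes in bijection with the $q+1$ sets of points of $\mathbb{P}(T(q))$ (the first set and the $q$ $k$-sets), so two lines lie in the same affine plane iff the underlying points lie in the same set, and invoking Corollary \ref{distinct.set.points} then yields the stated criterion $x_{22}=w_{22}$, $y_{22}=z_{22}$.

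For part (\ref{classes}) I will work inside a fixed affine plane, which is guaranteed by the matching $(2,2)$-entries built into the hypothesis. Using Example \ref{first.set}, the submodules of type (\ref{affine}) contained in the canonical non-vertical line $T(q)(I,[y_{11},0;y_{21},0])$ of the first set are parameterized by $p_{21}\in F(q)$ with $(2,1)$-entries $(p_{21},\,p_{21}y_{11}+y_{21})$, so two such lines meet in an affine point iff $p_{21}(y_{11}-z_{11})=z_{21}-y_{21}$ is solvable. This singles out $y_{11}$ as the parallel-class invariant of the non-vertical lines, and the $q$ vertical lines $T(q)([0,0;x_{21},1],[1,0;0,0])$ form one further class which is transversal to all of them. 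Performing the analogous calculation for each $k$-set and reading off the canonical $(1,1)$-entries from Theorem \ref{points} then shows that, inside any affine plane, the parallel class is recorded exactly by the pair $(x_{11},y_{11})$. A short check that, once the $(2,2)$-entries are normalized, left multiplication by a unit sends $(x_{11},y_{11})$ to $(u_{11}x_{11},u_{11}y_{11})$ confirms that the equalities $x_{11}=w_{11}$, $y_{11}=z_{11}$ on canonical generators indeed characterize the parallel class.

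Part (\ref{lines}) is then a direct continuation. By (\ref{planes}) and (\ref{classes}) the two submodules lie in the same parallel class, so the only remaining freedom in the unit action after fixing the $(1,1)$- and $(2,2)$-entries is $u_{21}\in F(q)$, sending $(x_{21},y_{21})$ to $(x_{21}+u_{21}x_{11},\,y_{21}+u_{21}y_{11})$. In the canonical representatives of Theorem \ref{points} one of $x_{21}$, $y_{21}$ is forced to $0$ inside each parallel class, so two canonical pairs generate the same line exactly when $(x_{21},y_{21})=(w_{21},z_{21})$, whence distinct lines correspond to $x_{21}\neq w_{21}$ or $y_{21}\neq z_{21}$.

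The main obstacle I anticipate is bookkeeping: aligning the parameterizations of affine submodules and of canonical line representatives consistently across the first set and the $q$ $k$-sets, and making sure the labeled entries in the statement are read relative to those canonical representatives. The remaining linear-algebra calculations, once Example \ref{first.set} is adapted to the $k$-set forms of Theorem \ref{points}, are routine.
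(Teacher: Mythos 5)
Your proposal is correct and follows essentially the same route as the paper, which states this corollary without a separate proof precisely because all three parts are read off from Theorem \ref{points}, Corollary \ref{distinct.set.points}, Example \ref{first.set} and the proof of Theorem \ref{affine.plane}, exactly as you do. Your explicit remark that the entry conditions must be interpreted on the canonical generators of Theorem \ref{points} (since the $(1,1)$- and $(2,1)$-entries are only determined up to the unit action) is a worthwhile clarification of a point the paper leaves implicit.
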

This is, however, not a full story as it turns out that any of these affine plane can be extended to the {\it projective} plane of order $q$ due to the fact that in  $^2T(q)$ there also exist non-unimodular pairs (outliers) generating free cyclic submodules.
\begin{theorem}\label{projective.plane}
Any affine plane of order $q$ associated with  $\mathbb{P}(T(q))$ can be extended to the projective plane of order $q$ in the following way.

Consider the set of unimodular free cyclic submodules generated by pairs  $\left(\left[\begin{array}{cclr}
x_{11}&0\\ x_{21}&x_{22}\end{array}\right],\left[\begin{array}{cclr}
y_{11}&0\\ y_{21}&y_{22}\end{array}\right]\right)\subset  {^2T(q)}$, such that $x_{22}, y_{22}\in F(q)$ are fixed. 
\begin{enumerate}
\item For all such submodules, where $x_{11}, y_{11}\in F(q)$ are also fixed,  a submodule  $T(q)\left(\left[\begin{array}{cclr}
0&0\\ x_{11}&0\end{array}\right],\left[\begin{array}{cclr}
0&0\\y_{11}&0\end{array}\right]\right)\subset {^2T(q)}$ must be taken into account as a new point, and
\item A  free cyclic submodule  $T(q)\left(\left[\begin{array}{cclr}
x_{22}&0\\ y_{22}&0\end{array}\right],\left[\begin{array}{cclr}
y_{22}&0\\\delta_{y_{22}0}&0\end{array}\right]\right)\subset {^2T(q)}$,\linebreak where $\delta_{y_{22}0}$ stands for the Kronecker delta, must be taken into account as a new line. 
\end{enumerate} 
Points of the obtained projective plane are all cyclic submodules of order $q$ of ${^2T(q)}$, which are contained in a given set of unimodular free cyclic submodules of  $\mathbb{P}(T(q))$, and lines are these free cyclic submodules and the respective free cyclic submodule generated by an outlier. 
\end{theorem}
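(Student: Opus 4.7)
The approach is to reduce to the abstract completion-of-affine-plane construction recalled in Section 2. By Theorem \ref{affine.plane} we already have an affine plane of order $q$ in hand, so it suffices to identify the new points (resp.\ the new line) in the statement with the points at infinity (resp.\ the line at infinity) of its completion. Once that match is established, the preliminary fact that any such completion is a projective plane delivers the conclusion.

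First I would enumerate the parallel classes of the affine plane. By Corollary \ref{planes.classes.lines}(\ref{classes}), parallelism is detected by the first-row pair $(x_{11}, y_{11})$; invoking the $F^{*}(q)$-freedom on generators afforded by Remark \ref{generators.of.fcs}, distinct classes are in bijection with the projective line over $F(q)$, hence number exactly $q+1$. I would then note that the proposed new points $T(q)\bigl(\bigl[\begin{smallmatrix}0&0\\ x_{11}&0\end{smallmatrix}\bigr],\bigl[\begin{smallmatrix}0&0\\ y_{11}&0\end{smallmatrix}\bigr]\bigr)$ exhibit precisely the same projective-line indexing (by the generator count in Remark \ref{nonfree}(\ref{rankq}) two such submodules coincide iff their labels are $F^{*}(q)$-proportional), so there are exactly $q+1$ distinct candidate points at infinity.

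Next I would check incidence by direct computation. Left-multiplying an arbitrary line generator $\bigl(\bigl[\begin{smallmatrix}x_{11}&0\\ x_{21}&x_{22}\end{smallmatrix}\bigr],\bigl[\begin{smallmatrix}y_{11}&0\\ y_{21}&y_{22}\end{smallmatrix}\bigr]\bigr)$ by $\bigl[\begin{smallmatrix}0&0\\ 1&0\end{smallmatrix}\bigr]$ yields exactly $\bigl(\bigl[\begin{smallmatrix}0&0\\ x_{11}&0\end{smallmatrix}\bigr],\bigl[\begin{smallmatrix}0&0\\ y_{11}&0\end{smallmatrix}\bigr]\bigr)$, so the candidate point indexed by $(x_{11},y_{11})$ is contained in every line of the parallel class with that label, and—because left multiplication preserves the first-row entries—in no line of any other class. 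For the new line, in both the ``first set'' case ($x_{22}=1,\ y_{22}=0$) and the ``$k$-set'' case ($x_{22}=k,\ y_{22}=1$) one exhibits an explicit matrix whose left action on its generator produces the new point indexed by $(x_{11},y_{11})$, namely $\bigl[\begin{smallmatrix}0&0\\ x_{11}&y_{11}\end{smallmatrix}\bigr]$ in the first-set case and $\bigl[\begin{smallmatrix}0&0\\ y_{11}&x_{11}-k y_{11}\end{smallmatrix}\bigr]$ in the $k$-set case, so all $q+1$ new points lie on the new line. Conversely, Proposition \ref{crucial} guarantees that this non-unimodular free cyclic submodule contains no type-(\ref{affine}) submodule, so it is incident with no affine point, confirming its role as the line at infinity.

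Assembling these facts, the enlarged incidence structure has $q^{2}+(q+1)=q^{2}+q+1$ points and $(q^{2}+q)+1=q^{2}+q+1$ lines with incidences matching exactly the abstract completion of the affine plane of Theorem \ref{affine.plane}, and is therefore the projective plane of order $q$. I expect the main technical obstacle to be the verification that the new line contains all $q+1$ points at infinity: the two families of affine planes (the first set and the $q$ $k$-sets) produce slightly different generators for the line at infinity, and it is precisely this asymmetry that forces the Kronecker delta $\delta_{y_{22}0}$ into the statement, giving a single uniform formula $T(q)\bigl(\bigl[\begin{smallmatrix}x_{22}&0\\ y_{22}&0\end{smallmatrix}\bigr],\bigl[\begin{smallmatrix}y_{22}&0\\ \delta_{y_{22}0}&0\end{smallmatrix}\bigr]\bigr)$ valid in both regimes.
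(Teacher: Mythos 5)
Your proposal is correct and follows essentially the same route as the paper's own proof: it verifies that each new point lies on precisely the lines of one parallel class (via the explicit left-multiplication computation) and that the non-unimodular free cyclic submodule contains all new points but, by Proposition \ref{crucial}, no affine point, and then appeals to the completion construction from the preliminaries. Your version merely spells out the witnessing matrices and the point/line counts more explicitly than the paper does.
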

\begin{proof}
Points of the form $T(q)\left(\left[\begin{array}{cclr}
x_{11}&0\\ x_{21}&x_{22}\end{array}\right],\left[\begin{array}{cclr}
y_{11}&0\\ y_{21}&y_{22}\end{array}\right]\right)\in  \mathbb{P}(T(q))$, such that $x_{22}, y_{22}\in F(q)$ are fixed, represent all lines of an affine plane associated with  $\mathbb{P}(T(q))$. 
\begin{enumerate}
\item 
Obviously, a new submodule $T(q)\left(\left[\begin{array}{cclr}
0&0\\ x_{11}&0\end{array}\right],\left[\begin{array}{cclr}
0&0\\y_{11}&0\end{array}\right]\right)\subset {^2T(q)}$ is contained in each submodule $T(q)\left(\left[\begin{array}{cclr}
x_{11}&0\\ x_{21}&x_{22}\end{array}\right],\left[\begin{array}{cclr}
y_{11}&0\\ y_{21}&y_{22}\end{array}\right]\right)\in \mathbb{P}(T(q))$, which, according to  Corollary \ref{planes.classes.lines} (\ref{classes}),  represents a line of a given parallel class, and is distinct for each such class of an affine plane associated with  $\mathbb{P}(T(q))$. 

\item A cyclic submodule $T(q)\left(\left[\begin{array}{cclr}
x_{22}&0\\ y_{22}&0\end{array}\right],\left[\begin{array}{cclr}
y_{22}&0\\\delta_{y_{22}0}&0\end{array}\right]\right)\subset {^2T(q)}$ 
contains a submodule $T(q)\left(\left[\begin{array}{cclr}
0&0\\ x_{11}&0\end{array}\right],\left[\begin{array}{cclr}
0&0\\y_{11}&0\end{array}\right]\right)\subset {^2T(q)}$ for any $x_{11}, y_{11}\in F(q)$,
and it
does not contain any submodule of type (\ref{affine}), this is straightforward from Proposition \ref{crucial} and its proof.

It means that a new line (represented by  a non-unimodular free cyclic submodule) is incident with all new points (represented by submodules of type (\ref{projective})) and no other points. 
\end{enumerate}
\end{proof}

Summing up, cyclic submodules of order $q$ generated by pairs with both entries in $J$ are the points and  any non-unimodular free cyclic submodule incorporating these submodules is the line representing the {\it unique} projective closure of all affine planes associated with $\mathbb{P}(T(q))$.
Figure 1 serves as an illustration of our findings for the case $q=2$.

\begin{figure}[pth!]
\centerline{\includegraphics[width=11truecm,clip=]{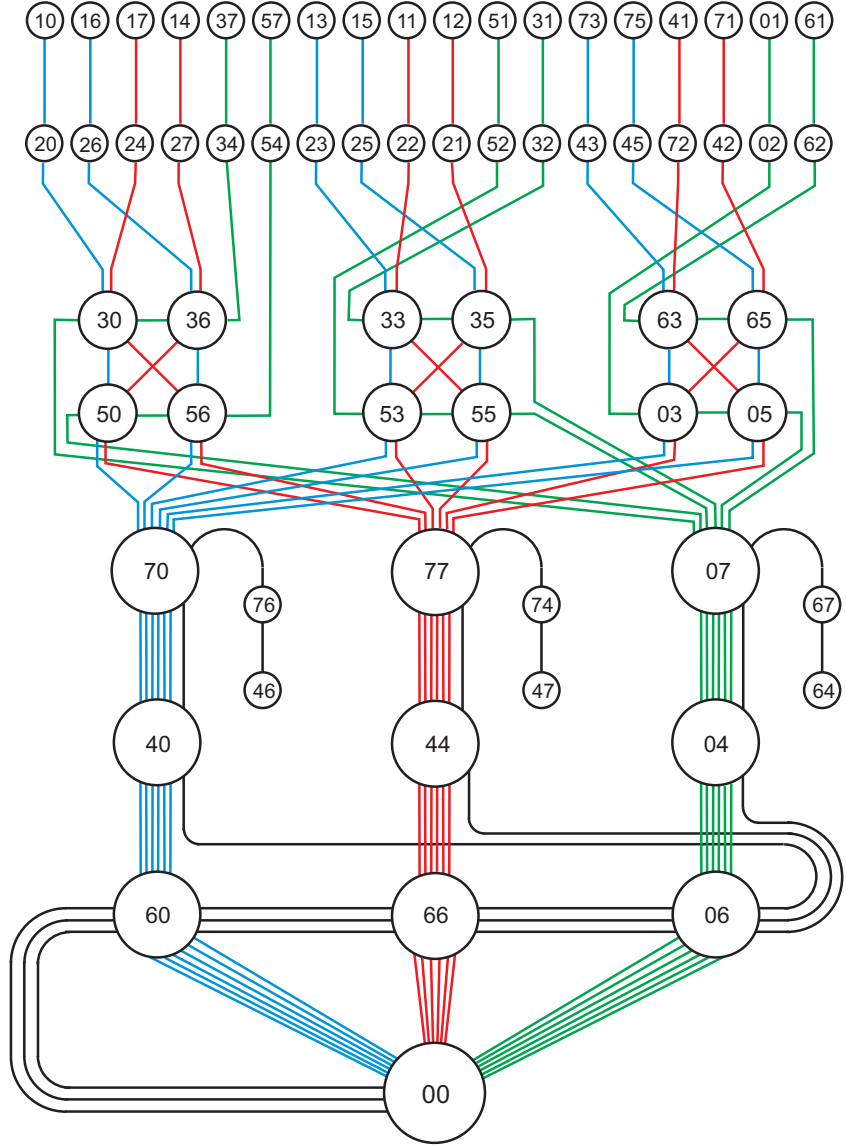}}
\caption{A visualisation of all free cyclic submodules over $T(2)$ after \cite{sp}. The 18 free cyclic submodules generated by unimodular pairs are represented by colored broken polygons, whereas those three generated by nonunimodular vectors are represented by black curves. For the sake of simplicity, the matrices of $T(2)$ are labeled by integers from 0 to 7 in such a way that $J = \{0,6\}$ (for more details, see \cite{shpp,sp}). The three affine planes are represented by the following set of points: 
$\{(3,0),(3,6),(5,0),(5,6)\}$, $\{(3,3),(3,5),(5,3),(5,5)\}$ and $\{(6,3),(6,5),(0,3),(0,5)\}$ and two parallel lines have the same color; the closure line comprises the points $(6,0)$, $(6,6)$ and $(0,6)$. } 
\end{figure}
\bigskip

\section{Affine and projective planes associated with $\mathbb{P}(T_3(q))$}

\begin{theorem}\label{fcs.out.3}
There are exactly $(q+1)^2(2q^2+q+1)$ free cyclic submodules of \ ${^2T_3(q)}$ generated by outliers. They can be presented as  $q+1$ sets: the first set and $q$ $k$-sets, where $k\in F(q)$.

The first set consists of submodules generated by pairs: 
$$\left(\left[\begin{array}{cclr}
1&0&0\\0&0&0\\0&x_{32}&1\end{array}\right],\left[\begin{array}{cclr}
0&0&0\\ 1&0&0\\0&y_{32}&y_{33}\end{array}\right]\right),
\left(\left[\begin{array}{cclr}
1&0&0\\0&0&0\\0&x_{32}&0\end{array}\right],\left[\begin{array}{cclr}
0&0&0\\ 1&0&0\\0&y_{32}&1\end{array}\right]\right),$$
$$\left(\left[\begin{array}{cclr}
1&0&0\\0&0&0\\0&x_{32}&0\end{array}\right],\left[\begin{array}{cclr}
0&0&0\\ 1&0&0\\0&1&0\end{array}\right]\right),\left(\left[\begin{array}{cclr}
1&0&0\\0&0&0\\0&1&0\end{array}\right],\left[\begin{array}{cclr}
0&0&0\\ 1&0&0\\0&0&0\end{array}\right]\right),$$
$$\left(\left[\begin{array}{cclr}
1&0&0\\0&1&0\\0&0&0\end{array}\right],\left[\begin{array}{cclr}
0&0&0\\ y_{21}&y_{22}&0\\y_{31}&1&0\end{array}\right]\right),
\left(\left[\begin{array}{cclr}
1&0&0\\0&1&0\\0&0&0\end{array}\right],\left[\begin{array}{cclr}
0&0&0\\ y_{21}&y_{22}&0\\1&0&0\end{array}\right]\right),$$
$$\left(\left[\begin{array}{cclr}
1&0&0\\0&0&0\\0&1&0\end{array}\right],\left[\begin{array}{cclr}
0&0&0\\ y_{21}&1&0\\y_{31}&0&0\end{array}\right]\right),
\left(\left[\begin{array}{cclr}
1&0&0\\0&0&0\\0&0&0\end{array}\right],\left[\begin{array}{cclr}
0&0&0\\ y_{21}&1&0\\1&0&0\end{array}\right]\right),$$
where $x_{32}, y_{21}, y_{22}, y_{31}, y_{32}, y_{33}\in F(q)$;

\vspace*{0.3cm}

Each of $k$-sets consists of submodules generated by pairs:  

$$\left(\left[\begin{array}{cclr}
k&0&0\\ 1&0&0\\0&x_{32}&x_{33}
\end{array}\right],
\left[\begin{array}{cclr}
1&0&0\\0&0&0\\0&y_{32}&1\end{array}\right]\right),
\left(\left[\begin{array}{cclr}
k&0&0\\ 1&0&0\\0&x_{32}&1\end{array}\right],
\left[\begin{array}{cclr}1&0&0\\0&0&0\\0&y_{32}&0\end{array}\right]\right),$$
$$\left(\left[\begin{array}{cclr}
k&0&0\\ 1&0&0\\0&1&0\end{array}\right],
\left[\begin{array}{cclr}1&0&0\\0&0&0\\0&y_{32}&0\end{array}\right]\right),
\left(\left[\begin{array}{cclr}
k&0&0\\ 1&0&0\\0&0&0\end{array}\right],
\left[\begin{array}{cclr}1&0&0\\0&0&0\\0&1&0\end{array}\right]\right),$$
$$\left(\left[\begin{array}{cclr}
k&0&0\\ x_{21}&x_{22}&0\\x_{31}&1&0\end{array}\right],
\left[\begin{array}{cclr}1&0&0\\0&1&0\\0&0&0\end{array}\right]\right),
\left(\left[\begin{array}{cclr}
k&0&0\\ x_{21}&x_{22}&0\\1&0&0\end{array}\right],
\left[\begin{array}{cclr}1&0&0\\0&1&0\\0&0&0\end{array}\right]\right),$$
$$\left(\left[\begin{array}{cclr}
k&0&0\\ x_{21}&1&0\\x_{31}&0&0\end{array}\right],
\left[\begin{array}{cclr}1&0&0\\0&0&0\\0&1&0\end{array}\right]\right),
\left(\left[\begin{array}{cclr}
k&0&0\\ x_{21}&1&0\\1&0&0\end{array}\right],
\left[\begin{array}{cclr}1&0&0\\0&0&0\\0&0&0\end{array}\right]\right),$$
where $x_{21}, x_{22}, x_{31}, x_{32}, x_{33}, y_{32}\in F(q)$.
\end{theorem}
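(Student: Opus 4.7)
The plan is to mimic the proof of Theorem \ref{nfcs} but with a more elaborate case analysis reflecting the jump from $2\times 2$ to $3\times 3$ lower triangular matrices. By \cite[Theorem 1.2]{fcs} (already invoked in the proof of Theorem \ref{nfcs}), every non-unimodular free cyclic submodule of $^2T_3(q)$ is generated by an outlier. Combined with Remark \ref{generators.of.fcs}, which says that any free cyclic submodule admits exactly $|T_3^*(q)|=(q-1)^3q^3$ distinct generators, the task reduces to counting outlier pairs $(X,Y)$ that generate free cyclic submodules and then dividing by $(q-1)^3q^3$. The target count $(q+1)^2(2q^2+q+1)$ therefore corresponds to exactly $(q+1)^2(2q^2+q+1)(q-1)^3q^3$ such outliers, and that is the number I would aim to confirm by direct enumeration.

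I would translate freeness of $T_3(q)(X,Y)$ into injectivity of the map $A\mapsto(AX,AY)$ on $T_3(q)$: writing $A=(a_{ij})_{i\geq j}$ and expanding the products row by row yields a homogeneous linear system in the six unknowns $a_{ij}$ whose coefficients are the entries of $X$ and $Y$. The outlier assumption forces $x_{ii}=y_{ii}=0$ for at least one $i\in\{1,2,3\}$, so I would split cases according to which of the three pairs $(x_{ii},y_{ii})$ vanish, reading off the additional non-degeneracy conditions on the off-diagonal entries needed for triviality of the kernel; these generalise the four-condition list appearing in the proof of Theorem \ref{nfcs}. The partition into one first set and $q$ $k$-sets then reflects the projective type $[x_{11}:y_{11}]$ of a generating pair under left multiplication by $T_3^*(q)$: the first set corresponds to $y_{11}=0$ (with $x_{11}$ normalised to $1$), while the $k$-sets correspond to $y_{11}\neq 0$ (normalised to $y_{11}=1$), so that $x_{11}=k\in F(q)$. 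Within each class, a secondary case split according to which of $x_{22},y_{22},x_{33},y_{33}$ vanish, together with normalisation of the leading non-zero entry in each column of $X$ and $Y$ to $1$, isolates exactly the eight canonical pair types listed in the theorem, with their prescribed free parameters ranging over $F(q)$.

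The main obstacle is the combinatorial bookkeeping: for every configuration of vanishing diagonal entries one must determine the precise non-degeneracy conditions needed for freeness, carry out the normalisation producing the canonical form, and verify pairwise distinctness of the resulting submodules. Once the eight families are in hand, counting parameters per family and summing gives a contribution of $(2q^2+q+1)(q-1)^3q^3$ free outliers per class; multiplying by the $q+1$ classes and dividing by $(q-1)^3q^3$ then yields the announced total $(q+1)^2(2q^2+q+1)$. Distinctness across classes is immediate from the $T_3^*(q)$-invariance of $[x_{11}:y_{11}]$, and distinctness within a single class follows by direct inspection of the normalised entries.
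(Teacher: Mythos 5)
Your strategy is essentially the paper's own: invoke \cite[Theorem 1.2]{fcs} to identify the non-unimodular free cyclic submodules with those generated by outliers, enumerate the outliers generating free cyclic submodules via a case analysis on which diagonal pairs $(x_{ii},y_{ii})$ vanish, divide by $|T_3^*(q)|=(q-1)^3q^3$ using Remark \ref{generators.of.fcs}, and check pairwise distinctness of the normalised representatives by left multiplication with units. The paper carries this out by writing down the explicit freeness conditions in each case (three main cases by the behaviour of $x_{11},y_{11}$, each with four subcases on $x_{22},y_{22}$), which is exactly the bookkeeping you defer.

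There is, however, an arithmetic inconsistency in your final tally. Summing the parameter counts of the eight families within one class gives $q^3+q^2+q+1+q^3+q^2+q^2+q=2q^3+3q^2+2q+1=(q+1)(2q^2+q+1)$ submodules per class, i.e.\ $(q+1)(2q^2+q+1)(q-1)^3q^3$ outliers per class, not $(2q^2+q+1)(q-1)^3q^3$ as you state. With your figure, multiplying by the $q+1$ classes and dividing by $(q-1)^3q^3$ would yield only $(q+1)(2q^2+q+1)$, which contradicts the announced total $(q+1)^2(2q^2+q+1)$; with the corrected per-class count the totals match the theorem and the paper's intermediate count of $(q-1)^3q^3(q+1)^2(2q^2+q+1)$ outliers. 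Since the theorem is at bottom a counting statement, this factor of $q+1$ needs to be repaired, but it is a slip in the summation rather than a flaw in the method.
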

\begin{proof}
In the light of \cite[Theorem 1.2]{fcs} all non-unimodular free cyclic submodules of ${^2T_3(q)}$ are generatd by outliers. 

A non-unimodular pair $\left(\left[\begin{array}{cclr}
x_{11}&0&0\\ x_{21}&x_{22}&0\\x_{31}&x_{32}&x_{33}\end{array}\right],\left[\begin{array}{cclr}
y_{11}&0&0\\y_{21}&y_{22}&0\\y_{31}&y_{32}&y_{33}\end{array}\right]\right)\subset {^2T_3(q)}$  generates free cyclic submodule of ${^2T_3(q)}$ if, and only if, $x_{22}=y_{22}=0$ or $x_{22}=y_{22}=0$ and one of the following conditions is satisfied:
\begin{enumerate}
\item $x_{11}=0, y_{11}\neq 0$ and
\begin{enumerate} 
\item $x_{22}=y_{22}=0, x_{21}\neq 0, x_{32}\neq 0 \vee x_{33}\neq 0 \vee y_{32}\neq 0 \vee  y_{33}\neq 0,$ or
\item $x_{22}=0, y_{22}\neq 0, x_{31}\neq x_{21}x_{22}^{-1}x_{32} \vee (x_{31}=x_{21}y_{22}^{-1}y_{32}\wedge x_{32}\neq 0),$ or
\item $x_{22}\neq 0, y_{22}= 0, x_{31}\neq x_{21}x_{22}^{-1}x_{32} \vee (x_{31}=x_{21}x_{22}^{-1}x_{32}\wedge y_{32}\neq 0),$ or
\item $x_{22}\neq 0, y_{22}\neq 0, x_{32}\neq x_{22}y_{22}^{-1}y_{32} \vee (x_{32}=x_{22}y_{22}^{-1}y_{32}\wedge x_{31}\neq x_{21}y_{22}^{-1}y_{32}).$
\end{enumerate}
\item $x_{11}\neq 0, y_{11}=0$ and
\begin{enumerate} 
\item $x_{22}=y_{22}=0, y_{21}\neq 0, x_{32}\neq 0 \vee x_{33}\neq 0 \vee y_{32}\neq 0 \vee  y_{33}\neq 0,$ or
\item $x_{22}=0, y_{22}\neq 0, y_{31}\neq y_{21}y_{22}^{-1}y_{32} \vee (y_{31}=y_{21}y_{22}^{-1}y_{32}\wedge x_{32}\neq 0),$ or
\item $x_{22}\neq 0, y_{22}= 0, y_{31}\neq y_{21}x_{22}^{-1}x_{32} \vee (y_{31}=y_{21}x_{22}^{-1}x_{32}\wedge y_{32}\neq 0),$ or
\item $x_{22}\neq 0, y_{22}\neq 0, x_{32}\neq x_{22}y_{22}^{-1}y_{32} \vee (x_{32}=x_{22}y_{22}^{-1}y_{32}\wedge x_{31}\neq x_{21}y_{22}^{-1}y_{32}),$
\end{enumerate}
\item $x_{11}\neq 0, y_{11}\neq 0$ and
\begin{enumerate} 
\item $x_{22}=y_{22}=0, x_{21}\neq x_{11}y_{11}^{-1}y_{21},$ or
\item $x_{22}=0, y_{22}\neq 0, x_{32}\neq 0  \vee \big(x_{32}=0 \wedge x_{31}\neq x_{11}y_{11}^{-1}y_{31}+(x_{21}-x_{11}y_{11}^{-1}y_{21})y_{22}^{-1}y_{32}\big),$ or
\item $x_{22}\neq 0, y_{22}= 0, y_{32}\neq 0  \vee \big(y_{32}=0 \wedge x_{31}\neq x_{11}y_{11}^{-1}y_{31}+(x_{21}-x_{11}y_{11}^{-1}y_{21})x_{22}^{-1}x_{32}\big),$ or
\item $x_{22}\neq 0, y_{22}\neq 0, x_{32}\neq x_{22}y_{22}^{-1}y_{32} \vee \big(x_{32}=x_{22}y_{22}^{-1}y_{32}\wedge x_{31}\neq x_{11}y_{11}^{-1}y_{31}+(x_{21}- x_{11}y_{11}^{-1}y_{21})y_{22}^{-1}y_{32}\big).$
\end{enumerate}
\end{enumerate}
Hence the total number of outliers generating free cyclic submodules of ${^2T_3(q)}$ is $(q-1)^3q^3(q+1)^2(2q^2+q+1)$.  Remark \ref{generators.of.fcs} implies that any such submodule is generated by $(q-1)^3q^3$ distinct outliers. So the number of non-unimodular free cyclic submodules of ${^2T_3(q)}$ is $(q+1)^2(2q^2+q+1)$. 

By using Remark \ref{generators.of.fcs} again and multiplying outliers by all invertible matrices of $T_3^*(q)$ from the left we get immediately that the listed non-unimodular free cyclic submodules are pairwise distinct. 
\end{proof}

Just as in the case of ternions, we are interested in non-free cyclic submodules not contained in any free cyclic submodule generated by outlier.

\begin{prop}
There are exactly $(q+1)^2q^5$ non-free cyclic submodules of ${^2T_3(q)}$ not contained in any non-unimodular free cyclic submodule of ${^2T_3(q)}$. They can be presented as $q+1$ sets having $q^2+q$ subsets each. 

The first set consists of $q^2$ subsets of the form
$$\left\{T_3(q)\left(\left[\begin{array}{cclr}
0&0&0\\ p_{21}&1&0\\p_{31}&0&1\end{array}\right],\left[\begin{array}{cclr}
0&0&0\\r_{21}&0&0\\r_{31}&r_{32}&r_{33}\end{array}\right]\right); p_{21}, p_{31}, r_{21}, r_{31}\in F(q)\right\},$$ 
where $r_{32}, r_{33}$ run through all the elements of  $F(q)$, and $q$ subsets of the form
$$\left\{T_3(q)\left(\left[\begin{array}{cclr}
0&0&0\\ p_{21}&1&0\\p_{31}&0&0\end{array}\right],\left[\begin{array}{cclr}
0&0&0\\r_{21}&0&0\\r_{31}&r_{32}&1\end{array}\right]\right); p_{21}, p_{31}, r_{21}, r_{31}\in F(q)\right\},$$ 
where $r_{32}$ runs through all the elements of  $F(q)$. 

\vspace*{0.3cm}

Each of $q$ $k$-sets, where $k\in F(q)$, consists of $q^2$ subsets of the form
$$\left\{T_3(q)\left(\left[\begin{array}{cclr}
0&0&0\\ p_{21}&k&0\\p_{31}&p_{32}&p_{33}\end{array}\right],\left[\begin{array}{cclr}
0&0&0\\r_{21}&1&0\\r_{31}&0&1\end{array}\right]\right); p_{21}, p_{31}, r_{21}, r_{31}\in F(q)\right\},$$ 
where $p_{32}, p_{33}$ run through all the elements of  $F(q)$, and $q$ subsets of the form
$$\left\{T_3(q)\left(\left[\begin{array}{cclr}
0&0&0\\ p_{21}&k&0\\p_{31}&p_{32}&1\end{array}\right],\left[\begin{array}{cclr}
0&0&0\\r_{21}&1&0\\r_{31}&0&0\end{array}\right]\right); p_{21}, p_{31}, r_{21}, r_{31}\in F(q)\right\},$$
where $p_{32}$ runs through all the elements of  $F(q)$.
\end{prop}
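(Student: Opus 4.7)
My plan is to mirror the template of Proposition \ref{crucial}, now applied against the richer enumeration of non-unimodular free cyclic submodules from Theorem \ref{fcs.out.3}. The argument splits into three stages: producing canonical forms for the relevant non-free cyclic submodules, verifying the non-containment claim, and counting.

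\emph{Stage 1 (canonical forms).} I would first extend Remark \ref{nonfree} to $n = 3$ by classifying all non-free cyclic submodules of ${^2T_3(q)}$ via a case analysis on which diagonal entries of the generating pair $(P, R)$ simultaneously vanish. The pairs listed in the proposition are precisely those in which both $P$ and $R$ have zero first row and for which at each of the positions $(2,2)$ and $(3,3)$ the two diagonal entries of $P$ and $R$ are not simultaneously zero, normalised as displayed. Mimicking the opening of the proof of Proposition \ref{crucial}, two such pairs generate the same cyclic submodule iff they differ by left multiplication by an element of $T_3^*(q)$; the displayed normalisation is then the canonical representative of its orbit, and within each listed subset the four free parameters $p_{21}, p_{31}, r_{21}, r_{31}$ yield $q^4$ pairwise distinct cyclic submodules.

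\emph{Stage 2 (non-containment).} Suppose $T_3(q)(P, R) \subseteq T_3(q)(X, Y)$ for some generator $(X, Y)$ enumerated in Theorem \ref{fcs.out.3}, so that $AX = P$ and $AY = R$ for some $A \in T_3(q)$. Every such $(X, Y)$ has a nonzero first row (equal to $(1,0,0)$ in $X$ for the first set, and in $Y$ for every $k$-set), while $P$ and $R$ have zero first rows, so matching the first row forces $a_{11} = 0$. With $a_{11} = 0$, the diagonal entries reduce to $(AX)_{ii} = a_{ii}X_{ii}$ and $(AY)_{ii} = a_{ii}Y_{ii}$ for $i = 2, 3$. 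A direct inspection of Theorem \ref{fcs.out.3} shows that each of its generator forms satisfies either $X_{22} = Y_{22} = 0$ or $X_{33} = Y_{33} = 0$; hence at least one of $(P_{22}, R_{22}) = (0,0)$ or $(P_{33}, R_{33}) = (0,0)$ must hold for any submodule contained in some $T_3(q)(X, Y)$. But by Stage 1 every pair listed in the proposition has both of these diagonal pairs nonzero, ruling out containment.

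\emph{Stage 3 (maximality and counting).} Conversely, a non-free cyclic submodule of ${^2T_3(q)}$ not on the above list has at least one of $(P_{22}, R_{22})$ or $(P_{33}, R_{33})$ equal to $(0,0)$, and for each such submodule I would exhibit an explicit matrix $A$ realising an inclusion into one of the generators of Theorem \ref{fcs.out.3}, exactly as the type-(\ref{projective}) submodules in the ternionic case are embedded into every non-unimodular free cyclic submodule. The count is then immediate: each of the $q+1$ sets contains $q^2 + q$ subsets ($q^2$ first-type subsets indexed by two free parameters plus $q$ second-type subsets indexed by one parameter), each subset has $q^4$ members, giving $(q+1)(q^2+q)q^4 = (q+1)^2 q^5$ submodules in total. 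The principal obstacle is the case analysis of Stage 2, which must be executed for each of the eight generator forms in the first set of Theorem \ref{fcs.out.3} and the eight forms in every $k$-set, against each of the four submodule types in the proposition; the reasoning is uniform and mechanical but voluminous, and careful bookkeeping of the diagonal-entry profile $(X_{ii}, Y_{ii})$ of each generator is essential to avoid gaps. A secondary difficulty in Stage 3 is constructing the appropriate $A$ explicitly for each excluded submodule type.
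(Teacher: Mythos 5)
Your proposal reproduces the architecture of the paper's proof exactly: (i) normalise and count the listed submodules, (ii) show they avoid every non-unimodular free cyclic submodule, (iii) show every other non-free cyclic submodule is caught by one. Stage 2 is complete and correct, and is in fact a slightly cleaner packaging of the paper's argument: the paper deduces from the containment equations that any free cyclic submodule containing a listed $T_3(q)(P,R)$ must have $x_{22}\neq 0$ and $x_{33}\neq 0$ and hence be unimodular, whereas you argue uniformly from the diagonal profile of the outlier generators in Theorem \ref{fcs.out.3}; both rest on the same observation that $(AX)_{ii}=a_{ii}x_{ii}$ for lower triangular matrices, so nothing is lost either way.

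The gap is Stage 3, which you state as a program rather than a proof, and which is where the paper spends essentially all of its effort: it exhibits, case by case (1.1.1 through 3.3.2), an explicit $A$ and an explicit outlier $(X,Y)$ from Theorem \ref{fcs.out.3} with $A(X,Y)=(P,R)$ for every relevant $(P,R)$. Two points make this heavier than your sketch suggests. First, your reduction ``not on the list $\Leftrightarrow$ $(P_{22},R_{22})=(0,0)$ or $(P_{33},R_{33})=(0,0)$'' is a correct \emph{necessary} condition, but the class of pairs with a vanishing lower diagonal pair contains both non-free submodules and the non-unimodular \emph{free} ones of Theorem \ref{fcs.out.3}; the case analysis must therefore carry along the negations of the freeness conditions (e.g.\ $r_{21}=p_{21}p_{11}^{-1}r_{11}$ in case 1.1.1, or $p_{32}=0$ and $r_{31}=r_{32}r_{22}^{-1}r_{21}$ in case 2.3) to isolate the genuinely non-free pairs, and the choice of $(X,Y)$ changes with each such sub-case. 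Second, the normalisation claim of Stage 1 --- that every pair with zero first rows and both lower diagonal pairs nonzero is equivalent under left multiplication by $T_3^*(q)$ to exactly one displayed representative, e.g.\ that $p_{32}$ can always be cleared when $p_{22}\neq 0$ --- is asserted but not verified, and the count $(q+1)^2q^5$ depends on it. Until these computations are written out, the maximality half of the statement remains unproven.
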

\begin{proof}
It is easy to show that subsets  specified in the assertion  are pairwise disjoint and any of them contains $q^4$ distinct pairs. 

Suppose now that ${T_3(q)\big(X, Y\big)}\subset {^2T_3(q)}$ is a free cyclic submodule  containing a submodule generated by a pair $\left(\left[\begin{array}{cclr}
0&0&0\\ p_{21}&1&0\\p_{31}&0&1\end{array}\right],\left[\begin{array}{cclr}
0&0&0\\r_{21}&0&0\\r_{31}&r_{32}&r_{33}\end{array}\right]\right)$. Then $x_{11}\neq 0 \vee y_{11}\neq 0$ and there exists $A\in T_3(q)$ such that $$\left[\begin{array}{cclr}
a_{11}x_{11}&0&0\\ a_{21}x_{11}+a_{22}x_{21}&a_{22}x_{22}&0\\
a_{31}x_{11}+a_{32}x_{21}+a_{33}x_{31}&a_{32}x_{22}+a_{33}x_{32}&a_{33}x_{33}\end{array}\right]
=\left[\begin{array}{cclr}
0&0&0\\ p_{21}&1&0\\p_{31}&0&1\end{array}\right]$$
 and
$$\left[\begin{array}{cclr}a_{11}z_{11}&0&0\\ a_{21}y_{11}+a_{22}y_{21}&a_{22}y_{22}&0\\
a_{31}y_{11}+a_{32}y_{21}+a_{33}y_{31}&a_{32}y_{22}+a_{33}y_{32}&a_{33}y_{33}\end{array}\right]
=\left[\begin{array}{cclr}
0&0&0\\r_{21}&0&0\\r_{31}&r_{32}&r_{33}\end{array}\right].$$
Hence $x_{22}\neq 0 $ and $x_{33}\neq 0$, so the pair $\big(X, Y\big)$ is unimodular. By the same methods it follows that all remaining specified submodules are not contained in any non-unimodular free cyclic submodule. 

We will show now that any other non-free cyclic submodules of ${^2T_3(q)}$ is contained in some free cyclic submodule generated by an outlier. Of course, $p_{22}=r_{22 }=0$ or $p_{33}=r_{33}=0$ for all pairs $$(P, R)=\left(\left[\begin{array}{cclr}
p_{11}&0&0\\ p_{21}&p_{22}&0\\p_{31}&p_{32}&p_{33}\end{array}\right],\left[\begin{array}{cclr}
r_{11}&0&0\\r_{21}&r_{22}&0\\r_{31}&r_{32}&r_{33}\end{array}\right]\right)\subset {^2T_3(q)}$$ 
generating these non-free cyclic submodules. Below we consider all the cases of such pairs $(P, R)$, giving also the  matrix $A\in T_3(q)$ and  the outlier $(X, Y)\subset {^2T_3(q)}$ generating a free cyclic submodule such that $A(X, Y)=(P, R)$.

\vspace*{0.3cm}

${\bf 1.}$ $p_{11}\neq 0$; 
${\bf 1.1}$  $p_{22}=r_{22}=0$;

${\bf 1.1.1.}$ $p_{32}\neq 0\vee p_{33}\neq 0 \vee r_{32}\neq 0\vee r_{33}\neq 0$, $r_{21}=p_{21}p_{11}^{-1}r_{11}$;  

\vspace*{0.3cm}

$A=\left[\begin{array}{cclr}
p_{11}&0&0\\ p_{21}&0&0\\p_{31}&-p_{31}p_{11}^{-1}r_{11}+r_{31}&1\end{array}\right]$,

$(X, Y)=\left(\left[\begin{array}{cclr}
1&0&0\\ 0&0&0\\0&p_{32}&p_{33}\end{array}\right],\left[\begin{array}{cclr}
p_{11}^{-1}r_{11}&0&0\\1&0&0\\0&r_{32}&r_{33}\end{array}\right]\right);$

\vspace*{0.3cm}

${\bf 1.1.2.}$  $p_{32}=p_{33}=r_{32}=r_{33}=0$;

\vspace*{0.3cm}

$A=\left[\begin{array}{cclr}
p_{11}&0&0\\ p_{21}&-p_{21}p_{11}^{-1}r_{11}+r_{21}&0\\p_{31}&-r_{31}p_{11}^{-1}r_{11}+r_{31}&0\end{array}\right]$,

$(X, Y)=\left(\left[\begin{array}{cclr}
1&0&0\\ 0&0&0\\0&1&0\end{array}\right],\left[\begin{array}{cclr}
p_{11}^{-1}r_{11}&0&0\\1&0&0\\0&0&0\end{array}\right]\right);$

\vspace*{0.3cm}

${\bf 1.2.}$ $p_{22}\neq 0$, $p_{33}=y_{33}=0$, $r_{31}=p_{31}p_{11}^{-1}r_{11}+p_{32}p_{22}^{-1}(r_{21}-p_{21}p_{11}^{-1}r_{11})$, $r_{32}=p_{32}p_{22}^{-1}r_{22}$;  

\vspace*{0.3cm}

$A=\left[\begin{array}{cclr}
p_{11}&0&0\\ p_{21}&1&0\\p_{31}&p_{32}p_{22}^{-1}&0\end{array}\right]$,

$(X, Y)=\left(\left[\begin{array}{cclr}
1&0&0\\ 0&p_{22}&0\\0&0&0\end{array}\right],\left[\begin{array}{cclr}
p_{11}^{-1}r_{11}&0&0\\-p_{21}p_{11}^{-1}r_{11}+r_{21}&r_{22}&0\\0&1&0\end{array}\right]\right);$

\vspace*{0.3cm}

${\bf 1.3.}$ $p_{22}=0, r_{22}\neq 0, p_{33}=r_{33}=0,
 p_{32}= 0, r_{31}= p_{31}p_{11}^{-1}r_{11}+r_{32}r_{22}^{-1}(r_{21}-$\linebreak
$-p_{21}p_{11}^{-1}r_{11})$;

\vspace*{0.3cm}

$A=\left[\begin{array}{cclr}
p_{11}&0&0\\ p_{21}&1&0\\p_{31}&r_{32}r_{22}^{-1}&0\end{array}\right]$,

$(X, Y)=\left(\left[\begin{array}{cclr}
1&0&0\\ 0&0&0\\0&1&0\end{array}\right],\left[\begin{array}{cclr}
p_{11}^{-1}r_{11}&0&0\\-p_{21}p_{11}^{-1}r_{11}+r_{21}&r_{22}&0\\0&0&0\end{array}\right]\right);$

\vspace*{0.3cm}

${\bf 2.}$ $p_{11}= 0, r_{11}\neq 0$; 
${\bf 2.1}$  $p_{22}=r_{22}=0$;

${\bf 2.1.1.}$ $p_{32}\neq 0\vee p_{33}\neq 0 \vee r_{32}\neq 0\vee r_{33}\neq 0, p_{21}=0$;  

\vspace*{0.3cm}

$A=\left[\begin{array}{cclr}
r_{11}&0&0\\ r_{21}&0&0\\r_{31}&p_{31}&1\end{array}\right]$, $(X, Y)=\left(\left[\begin{array}{cclr}
0&0&0\\ 1&0&0\\0&p_{32}&p_{33}\end{array}\right],\left[\begin{array}{cclr}
1&0&0\\0&0&0\\0&r_{32}&r_{33}\end{array}\right]\right);$

\vspace*{0.3cm}

${\bf 2.1.2.}$  $p_{32}=p_{33}=r_{32}=r_{33}=0$;
\pagebreak 

$A=\left[\begin{array}{cclr}
r_{11}&0&0\\ r_{21}&p_{21}&0\\r_{31}&p_{31}&0\end{array}\right]$, $(X, Y)=\left(\left[\begin{array}{cclr}
0&0&0\\ 1&0&0\\0&0&0\end{array}\right],\left[\begin{array}{cclr}
1&0&0\\0&0&0\\0&1&0\end{array}\right]\right);$

\vspace*{0.3cm}

${\bf 2.2.}$ $p_{22}\neq 0, p_{33}=r_{33}=0, p_{31}=p_{32}p_{22}^{-1}p_{21}, r_{32}= p_{32}p_{22}^{-1}r_{22}$; 

\vspace*{0.3cm}

$A=\left[\begin{array}{cclr}
r_{11}&0&0\\ r_{21}&1&0\\r_{31}&p_{32}p_{22}^{-1}&0\end{array}\right]$, $(X, Y)=\left(\left[\begin{array}{cclr}
0&0&0\\ p_{21}&p_{22}&0\\0&0&0\end{array}\right],\left[\begin{array}{cclr}
1&0&0\\0&r_{22}&0\\0&1&0\end{array}\right]\right);$

\vspace*{0.3cm}

${\bf 2.3.}$ $p_{22}=0, r_{22}\neq 0, p_{33}=r_{33}=0, p_{32}=0, p_{31}= r_{32}r_{22}^{-1}p_{21}$;  

\vspace*{0.3cm}

$A=\left[\begin{array}{cclr}
r_{11}&0&0\\ r_{21}&1&0\\r_{31}&r_{32}r_{22}^{-1}&0\end{array}\right]$,
$(X, Y)=\left(\left[\begin{array}{cclr}
0&0&0\\ p_{21}&0&0\\0&1&0\end{array}\right],\left[\begin{array}{cclr}
1&0&0\\0&r_{22}&0\\0&0&0\end{array}\right]\right);$

\vspace*{0.3cm}

${\bf 3.}$ $p_{11}=r_{11}=0$; 
${\bf 3.1}$  $p_{22}=r_{22}=0$;

${\bf 3.1.1.}$ $p_{32}\neq 0\vee p_{33}\neq 0 \vee r_{32}\neq 0\vee r_{33}\neq 0$;  

\vspace*{0.3cm}

$A=\left[\begin{array}{cclr}
0&0&0\\ p_{21}&r_{21}&0\\p_{31}&0&1\end{array}\right]$, $(X, Y)=\left(\left[\begin{array}{cclr}
1&0&0\\ 0&0&0\\0&p_{32}&p_{33}\end{array}\right],\left[\begin{array}{cclr}
0&0&0\\1&0&0\\r_{31}&r_{32}&r_{33}\end{array}\right]\right);$

\vspace*{0.3cm}

${\bf 3.1.2.}$  $p_{32}=p_{33}=r_{32}=r_{33}=0$;

\vspace*{0.3cm}
$A=\left[\begin{array}{cclr}
0&0&0\\ p_{21}&r_{21}&0\\p_{31}&r_{31}&0\end{array}\right]$, $(X, Y)=\left(\left[\begin{array}{cclr}
1&0&0\\ 0&0&0\\0&1&0\end{array}\right],\left[\begin{array}{cclr}
0&0&0\\1&0&0\\0&0&0\end{array}\right]\right);$

\vspace*{0.3cm}

${\bf 3.2.}$ $p_{22}\neq 0, p_{33}=r_{33}=0$; 
${\bf 3.2.1.}$ $r_{31}\neq p_{32}p_{22}^{-1}r_{21}\vee r_{32}\neq  p_{32}p_{22}^{-1}r_{22}$;  

\vspace*{0.3cm}

$A=\left[\begin{array}{cclr}
0&0&0\\ p_{21}&1&0\\p_{31}&p_{32}p_{22}^{-1}&1\end{array}\right]$,

$(X, Y)=\left(\left[\begin{array}{cclr}
1&0&0\\ 0&p_{22}&0\\0&0&0\end{array}\right],\left[\begin{array}{cclr}
0&0&0\\r_{21}&r_{22}&0\\-p_{32}p_{22}^{-1}r_{21}+r_{31}&-p_{32}p_{22}^{-1}r_{22}+r_{32}&0\end{array}\right]\right)$.

\vspace*{0.3cm}

${\bf 3.2.2.}$ $r_{31}= p_{32}p_{22}^{-1}r_{21}, r_{32}=p_{32}p_{22}^{-1}r_{22}$;  

\vspace*{0.3cm}

$A=\left[\begin{array}{cclr}
0&0&0\\ p_{21}&1&0\\p_{31}&p_{32}p_{22}^{-1}&0\end{array}\right]$, 
$(X, Y)=\left(\left[\begin{array}{cclr}
1&0&0\\ 0&p_{22}&0\\0&0&0\end{array}\right],\left[\begin{array}{cclr}
0&0&0\\r_{21}&r_{22}&0\\0&1&0\end{array}\right]\right)$;

\pagebreak

${\bf 3.3.}$ $p_{22}=0, r_{22}\neq 0, p_{33}=r_{33}=0$; 
${\bf 3.3.1.}$ $p_{32}\neq  0 \vee   r_{31}\neq r_{32}r_{22}^{-1}r_{21}$;  

\vspace*{0.3cm}

$A=\left[\begin{array}{cclr}
0&0&0\\ p_{21}&1&0\\p_{31}&r_{32}r_{22}^{-1}&1\end{array}\right]$,

$(X, Y)=\left(\left[\begin{array}{cclr}
1&0&0\\ 0&0&0\\0&p_{32}&0\end{array}\right],\left[\begin{array}{cclr}
0&0&0\\r_{21}&r_{22}&0\\-r_{32}r_{22}^{-1}r_{21}+r_{31}&0&0\end{array}\right]\right)$.

\vspace*{0.3cm}

${\bf 3.2.2.}$ $p_{32}=0, r_{31}=r_{32}r_{22}^{-1}r_{21}$;  

\vspace*{0.3cm}

$A=\left[\begin{array}{cclr}
0&0&0\\ p_{21}&1&0\\p_{31}&r_{32}r_{22}^{-1}&0\end{array}\right]$, 
$(X, Y)=\left(\left[\begin{array}{cclr}
1&0&0\\ 0&0&0\\0&1&0\end{array}\right],\left[\begin{array}{cclr}
0&0&0\\r_{21}&r_{22}&0\\0&0&0\end{array}\right]\right)$.

\end{proof}

\begin{theorem}\label{points3}
There are exactly $(q+1)^3q^3$ points of the projective line $\mathbb{P}(T_3(q))$. They can be presented as $q+1$ sets consists of $q^2+q$ subsets each: 
\begin{itemize}
\item a single set, referred to as the first set, consists of: 
\begin{itemize}
\item $q^2$ subsets of the form $$\left\{T_3(q)\big(X', Y\big), T_3(q)\big(X, Y'\big); x_{21}, x_{31}, y_{11}, y_{21}, y_{31}\in F(q)\right\},$$ where $$\big(X', Y\big)=\left(\left[\begin{array}{cclr}
1&0&0\\ 0&1&0\\0&0&1\end{array}\right],\left[\begin{array}{cclr}
y_{11}&0&0\\y_{21}&0&0\\y_{31}&y_{32}&y_{33}\end{array}\right]\right),$$
$$\big(X, Y'\big)=\left(\left[\begin{array}{cclr}
0&0&0\\ x_{21}&1&0\\x_{31}&0&1\end{array}\right],\left[\begin{array}{cclr}
1&0&0\\0&0&0\\0&y_{32}&y_{33}\end{array}\right]\right)$$ 
and $y_{32}, y_{33}$ run through all the elements of  $F(q)$;

\item $q$ subsets of the form $$\left\{T_3(q)\big(X', Y\big), T_3(q)\big(X, Y'\big); x_{21}, x_{31}, y_{11}, y_{21}, y_{31}\in F(q)\right\},$$ where $$\big(X',Y\big)=\left(\left[\begin{array}{cclr}
1&0&0\\ 0&1&0\\0&0&0\end{array}\right],\left[\begin{array}{cclr}
y_{11}&0&0\\y_{21}&0&0\\y_{31}&y_{32}&1\end{array}\right]\right),$$ 
$$\big(X, Y'\big)=\left(\left[\begin{array}{cclr}
0&0&0\\ x_{21}&1&0\\x_{31}&0&0\end{array}\right],\left[\begin{array}{cclr}
1&0&0\\0&0&0\\0&y_{32}&1\end{array}\right]\right)$$
and $y_{32}$ runs through all the elements of  $F(q)$. 
\end{itemize}

\item $q$ $k$-sets, where $k\in  F(q)$; each of them consists of:  
\begin{itemize}
\item $q^2$ subsets of the form $$\left\{T_3(q)\big(X, Y'\big), T_3(q)\big(X', Y\big);  x_{11}, x_{21}, x_{31}, y_{21}, y_{31}\in F(q)\right\},$$ where $$\big(X, Y'\big)=\left(\left[\begin{array}{cclr}
x_{11}&0&0\\x_{21}&k&0\\x_{31}&x_{32}&x_{33}\end{array}\right],\left[\begin{array}{cclr}
1&0&0\\ 0&1&0\\0&0&1\end{array}\right]\right),$$

$$\big(X', Y\big)=\left(\left[\begin{array}{cclr}
1&0&0\\0&k&0\\0&x_{32}&x_{33}\end{array}\right],\left[\begin{array}{cclr}
0&0&0\\ y_{21}&1&0\\y_{31}&0&1\end{array}\right]\right)$$ 
and $x_{32}, x_{33}$ run through all the elements of  $F(q)$;

\item $q$ subsets of the form $$\left\{T_3(q)\big(X, Y'\big), T_3(q)\big(X', Y\big); x_{11}, x_{21}, x_{31}, y_{21}, y_{31}\in F(q)\right\},$$ where $$\big(X, Y'\big)=\left(\left[\begin{array}{cclr}
x_{11}&0&0\\x_{21}&k&0\\x_{31}&x_{32}&1\end{array}\right],\left[\begin{array}{cclr}
1&0&0\\ 0&1&0\\0&0&0\end{array}\right]\right),$$
 
$$\big(X', Y\big)=\left(\left[\begin{array}{cclr}
1&0&0\\0&k&0\\0&x_{32}&1\end{array}\right],\left[\begin{array}{cclr}
0&0&0\\ y_{21}&1&0\\y_{31}&0&0\end{array}\right]\right)$$
and $x_{32}$ runs through all the elements of  $F(q)$. 
\end{itemize}\end{itemize}
\end{theorem}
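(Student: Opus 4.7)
The plan follows the template established by Theorem \ref{points} for the ternion case. First, I would establish $|\mathbb{P}(T_3(q))| = (q+1)^3 q^3$ by direct enumeration: a pair $(X,Y)\subset {^2T_3(q)}$ is unimodular iff $(x_{ii},y_{ii})\neq (0,0)$ for $i=1,2,3$, so there are $(q^2-1)^3 q^6$ unimodular pairs (three diagonal constraints times six unconstrained off-diagonal entries). By Remark \ref{generators.of.fcs} each unimodular free cyclic submodule admits exactly $|T_3^*(q)| = (q-1)^3 q^3$ generators; division yields the asserted total. Alternatively one may cite the count via \cite{graph}, if available for $n=3$.

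Second, I would verify by inspection that every listed generator pair is unimodular (each diagonal slot contains a $0/1$ pair, a $0/k$-with-$1$ pair, or a $1/y_{ii}$ pair, so $(x_{ii},y_{ii})\neq (0,0)$ in all cases) and count: each subset contains $q^3$ pairs of the first form (from the three free entries in the non-identity matrix) and $q^2$ pairs of the second form (from the two free entries), giving $q^2(q+1)$ pairs per subset. Each set has $q^2+q = q(q+1)$ subsets, so $q^3(q+1)^2$ pairs per set and $q^3(q+1)^3$ pairs across the $q+1$ sets, matching Step~1. Hence it suffices to show that the listed pairs represent pairwise distinct submodules.

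For the distinctness I invoke Remark \ref{generators.of.fcs}: $T_3(q)(X_1,Y_1) = T_3(q)(X_2,Y_2)$ iff $(X_2,Y_2) = A(X_1,Y_1)$ for some $A\in T_3^*(q)$. In each configuration, the rigid positions of $0$'s and $1$'s on the diagonals of the listed normal forms pin $A$ down almost completely. For instance, within a first-set subset of the first type, the form $(X',Y)=(I,Y)$ forces $A=I$, so distinct $Y$'s give distinct submodules; the form $(X,Y')$ with $X$ having diagonal $(0,1,1)$ forces $a_{22}=a_{33}=1$, $a_{32}=0$, and then matching $Y'$ pins down $a_{11}=1$, $a_{21}=a_{31}=0$; a cross-comparison between the two forms fails at the $(1,1)$ entry of $Y$ vs.\ $Y'$. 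Between subsets of the same set, the fixed parameters $y_{32}, y_{33}$ (or $x_{32}, x_{33}$ in a $k$-set) are invariants under the allowed $A$'s. Between the first set and any $k$-set, $A$-multiplication cannot turn $y_{22}=0$ into $y_{22}=1$; between two distinct $k$-sets, the forced $a_{22}=1$ preserves the value of $x_{22}=k$. These verifications are entirely in the spirit of Example \ref{first.set} and the concluding argument of Theorem \ref{points}. The main obstacle is organizational rather than conceptual: the number of subsets and the two families within each subset produce a long case list, so the critical task is to record, for each pair of normal forms under comparison, which diagonal entries act as invariants and which entries of $A$ they fix; once that bookkeeping is done, each individual verification is a one-line matrix computation.
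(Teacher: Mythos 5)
Your proposal is correct and follows essentially the same route as the paper's proof: establish the total count $(q+1)^3q^3$, verify that the listed generators are unimodular and that their number matches the total, and then reduce distinctness to the diagonal invariants supplied by Remark \ref{generators.of.fcs} (the paper likewise uses the fact that equality of submodules forces $u_{ii}x_{ii}=w_{ii}$, $u_{ii}y_{ii}=z_{ii}$ for units $u_{ii}$, which separates the sets and subsets, and that within a subset equality forces $A=I$). The only difference is cosmetic: you derive the total count by dividing $(q^2-1)^3q^6$ unimodular pairs by $|T_3^*(q)|=(q-1)^3q^3$, whereas the paper simply cites \cite[Corollary 1]{graph}; your version is self-contained and equally valid.
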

\begin{proof}
The number $(q+1)^3q^3$ of unimodular free cyclic submodules of  $\mathbb{P}(T_3(q))$ follows directly from \cite[Corollary 1]{graph}.

Let $T_n(q)(X, Y), T_n(q)(W, Z)\in \mathbb{P}(T_n(q))$, then Remark \ref{generators.of.fcs} implies that 
$$T_n(q)(X, Y)=T_n(q)(W, Z)\Rightarrow\bigwedge_{\substack{x_{ii},y_{ii}\in F^*(q);\\ i=1,\dots, n}}\bigvee_{u_{ii}\in F^*(q)} u_{ii}x_{ii}=w_{ii}\wedge u_{ii}y_{ii}=z_{ii}.$$
Furthermore, $u_{22}k=l$ and $u_{22}1=1$ if, and only if, $u_{22}=1$ and $k=l$. These two facts exclude the possibility that points of $\mathbb{P}(T_3(q))$ in two distinct subsets are the same. Therefore the listed $(q+1)^2q$ subsets of points of $\mathbb{P}(T_3(q))$ are pairwise disjoint.  
Moreover, by using Remark \ref{generators.of.fcs} again we get that two points $T_3(q)(X, Y), T_3(q)(W, Z)\in \mathbb{P}(T_3(q))$ of the same subset are equal if, and only if, they are proportional by the identity matrix, equivalently $(X, Y)=(W, Z)$. So each of the listed subsets contains $q^2(q+1)$ distinct points.
\end{proof}

\begin{cor}\label{points.of.subset}
Two unimodular free cyclic submodules  $$T_3(q)\left(\left[\begin{array}{cclr}
x_{11}&0&0\\ x_{21}&x_{22}&0\\x_{31}&x_{32}&x_{33}\end{array}\right],\left[\begin{array}{cclr}
y_{11}&0&0\\y_{21}&y_{22}&0\\y_{31}&y_{32}&y_{33}\end{array}\right]\right),$$ $$T_3(q)\left(\left[\begin{array}{cclr}
w_{11}&0&0\\ w_{21}&w_{22}&0\\w_{31}&w_{32}&w_{33}\end{array}\right],\left[\begin{array}{cclr}
z_{11}&0&0\\z_{21}&z_{22}&0\\z_{31}&z_{32}&z_{33}\end{array}\right]\right)$$ of $\mathbb{P}(T_3(q))$ are in the same set of points of  $\mathbb{P}(T_3(q))$ if, and only if, $w_{22}=x_{22}, z_{22}=y_{22}$. 

Two unimodular free cyclic submodules $$T_3(q)\left(\left[\begin{array}{cclr}
x_{11}&0&0\\ x_{21}&x_{22}&0\\x_{31}&x_{32}&x_{33}\end{array}\right],\left[\begin{array}{cclr}
y_{11}&0&0\\y_{21}&y_{22}&0\\y_{31}&y_{32}&y_{33}\end{array}\right]\right),$$ $$T_3(q)\left(\left[\begin{array}{cclr}
w_{11}&0&0\\ w_{21}&x_{22}&0\\w_{31}&w_{32}&w_{33}\end{array}\right],\left[\begin{array}{cclr}
z_{11}&0&0\\z_{21}&y_{22}&0\\z_{31}&z_{32}&z_{33}\end{array}\right]\right)$$ of ${^2T_3(q)}$   are in the same  subset of points of  $\mathbb{P}(T_3(q))$ if, and only if, $w_{32}=x_{32}, w_{33}=x_{33}, z_{32}=y_{32}, z_{33}=y_{33}$. 
\end{cor}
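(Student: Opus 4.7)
The plan is to derive both parts directly from the classification of points established in Theorem \ref{points3}, combined with Remark \ref{generators.of.fcs}, which tells us that two generating pairs of the same free cyclic submodule differ by left multiplication by an element $U\in T_3^*(q)$. The computational workhorse is the transformation rule for the relevant entries of $UA$ with $A$ lower triangular of order $3$: $(UA)_{22}=u_{22}a_{22}$, $(UA)_{33}=u_{33}a_{33}$, and $(UA)_{32}=u_{32}a_{22}+u_{33}a_{32}$. I would record these at the outset for both $A=X$ and $A=Y$ and treat the two parts as bookkeeping on top of Theorem \ref{points3}.

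For the first assertion, Theorem \ref{points3} places every point of $\mathbb{P}(T_3(q))$ into precisely one of the $q+1$ sets according to whether it admits a generator with $(x_{22},y_{22})=(1,0)$ (first set) or $(x_{22},y_{22})=(k,1)$ for some $k\in F(q)$ ($k$-set). Under the change-of-generator rule above, the pair $(x_{22},y_{22})$ is multiplied by the scalar $u_{22}\in F^*(q)$, so the $F^*(q)$-orbit of $(x_{22},y_{22})$ is an invariant of the submodule, and distinct sets correspond to distinct orbits. It follows that pairs with $w_{22}=x_{22}$ and $z_{22}=y_{22}$ generate submodules in the same set, and conversely that two submodules lying in a common set admit generators whose $(2,2)$-entries coincide in the prescribed way.

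For the second assertion, assume $w_{22}=x_{22}$ and $z_{22}=y_{22}$, so both submodules already lie in a common set. Since the pair is unimodular at index $2$, i.e.\ $x_{22}\neq 0\vee y_{22}\neq 0$, the equalities $u_{22}x_{22}=x_{22}$ and $u_{22}y_{22}=y_{22}$ force $u_{22}=1$ for the matrix $U\in T_3^*(q)$ relating any two chosen generators after this normalization. Within a fixed set, Theorem \ref{points3} subdivides the points into $q^2+q$ subsets parametrized by the joint value of $(x_{32},x_{33},y_{32},y_{33})$; applying the transformation rules above with $u_{22}=1$ shows that this quadruple is the invariant distinguishing subsets, and hence that the condition $w_{32}=x_{32}$, $w_{33}=x_{33}$, $z_{32}=y_{32}$, $z_{33}=y_{33}$ is equivalent to the two submodules belonging to the same subset. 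The step I expect to require the most care is the degenerate situation in which one of $x_{33},y_{33}$ vanishes: there one must invoke unimodularity at index $3$ to force $u_{33}=1$ and then $u_{32}=0$ from the surviving matching equations, after which the asserted equivalence is immediate from the explicit subset list in Theorem \ref{points3}.
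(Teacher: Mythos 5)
Your proposal is correct and takes essentially the same route as the paper: the corollary is simply read off from Theorem \ref{points3}, whose proof likewise invokes Remark \ref{generators.of.fcs} to relate any two normalized generators by a unit $U$ and then forces $u_{22}=1$ (and, implicitly, $u_{33}=1$ and $u_{32}=0$) from the fixed diagonal entries and unimodularity. Your explicit bookkeeping of the $(3,2)$-entry transformation merely spells out a step the paper leaves tacit.
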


\begin{prop}\label{nfcs.ufcs.3}
Let  
$$T_3(q)\big(P, R\big)=T_3(q)\left(\left[\begin{array}{cclr}
0&0&0\\ p_{21}&p_{22}&0\\p_{31}&p_{32}&p_{33}\end{array}\right],\left[\begin{array}{cclr}
0&0&0\\r_{21}&r_{22}&0\\r_{31}&r_{32}&r_{33}\end{array}\right]\right)\subset {^2T_3(q)},$$ be a non-free cyclic submodule  not contained in any non-unimodular free cyclic submodule of ${^2T_3(q)}$, and let 
$$T_3(q)\big(X, Y\big)=T_3(q)\left(\left[\begin{array}{cclr}
x_{11}&0&0\\ x_{21}&x_{22}&0\\x_{31}&x_{32}&x_{33}\end{array}\right],\left[\begin{array}{cclr}
y_{11}&0&0\\y_{21}&y_{22}&0\\y_{31}&y_{32}&y_{33}\end{array}\right]\right)\in \mathbb{P}(T_3(q)),$$
where $\big(X, Y\big)$ is of the form as in Theorem \ref{points3}.

$T_3(q)\big(P, R\big)\subset T_3(q)\big(X, Y\big)$ if, and only if, all of the following conditions are met: 
\begin{enumerate}
\item
$x_{22}=p_{22}, x_{32}=p_{32},  x_{33}=p_{33}, y_{22}=r_{22}, y_{32}=r_{32},  y_{33}=r_{33}$;
\item
$x_{21}=p_{21}-r_{21}x_{11}, x_{31}=p_{31}-r_{31}x_{11}$ if $y_{11}=1, y_{21}=y_{31}=0$ ; 
\item $y_{21}=r_{21}-p_{21}y_{11}, y_{31}=r_{31}-p_{31}y_{11}$ if $x_{11}=1, x_{21}=x_{31}=0$.
\end{enumerate}
\end{prop}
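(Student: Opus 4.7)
The plan is to translate the containment $T_3(q)(P,R)\subset T_3(q)(X,Y)$ into the existence of a lower-triangular matrix $A=[a_{ij}]\in T_3(q)$ with $A(X,Y)=(P,R)$, and then to solve this system coordinate by coordinate. The very first step is to compare the top-left entries: $a_{11}x_{11}=0=a_{11}y_{11}$, which combined with the unimodularity of $(X,Y)$ (ensuring $x_{11}\neq 0\vee y_{11}\neq 0$) forces $a_{11}=0$. This simplification is crucial as it eliminates the first column of $A$ from every subsequent equation involving the $(2,2)$, $(3,3)$ and $(3,2)$ positions.

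Next I would exploit the specific $0/1/k$ structure of the diagonal of $(X,Y)$ dictated by Theorem \ref{points3}: in any of the listed subsets, precisely one of $x_{22},y_{22}$ is nonzero (namely $1$ in the first set, or respectively $k$ and $1$ in a $k$-set), and similarly for $x_{33},y_{33}$. The four diagonal equations $a_{22}x_{22}=p_{22}$, $a_{22}y_{22}=r_{22}$, $a_{33}x_{33}=p_{33}$, $a_{33}y_{33}=r_{33}$, combined with the hypothesis that $(P,R)$ is of affine type (so that $(p_{22},r_{22})$ equals $(1,0)$ or $(k,1)$), uniquely determine $a_{22}$ and $a_{33}$ and simultaneously force the equalities $x_{22}=p_{22}$, $y_{22}=r_{22}$, $x_{33}=p_{33}$, $y_{33}=r_{33}$. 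Feeding these values into $a_{32}x_{22}+a_{33}x_{32}=p_{32}$ and $a_{32}y_{22}+a_{33}y_{32}=r_{32}$ then pins down $a_{32}$ and delivers $x_{32}=p_{32}$, $y_{32}=r_{32}$. Collectively this is condition 1 of the statement.

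The remaining first-column equations split into two symmetric cases, determined by which of $X$ or $Y$ is the ``primed'' matrix in Theorem \ref{points3}. When $x_{11}=1$ and $x_{21}=x_{31}=0$, the equation $a_{21}x_{11}+a_{22}x_{21}=p_{21}$ yields $a_{21}=p_{21}$; substituting this into $a_{21}y_{11}+a_{22}y_{21}=r_{21}$ (with $a_{22}$ already known) produces $y_{21}=r_{21}-p_{21}y_{11}$, and the analogous $(3,1)$ equation gives $y_{31}=r_{31}-p_{31}y_{11}$, which is condition 3. The dual case $y_{11}=1$, $y_{21}=y_{31}=0$ proceeds identically with the roles of $X$ and $Y$ swapped, producing condition 2. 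The converse is then immediate: given the stated conditions one writes down $A$ by reversing these prescriptions and verifies $A(X,Y)=(P,R)$ by direct computation.

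The main obstacle is organisational rather than conceptual. Theorem \ref{points3} enumerates several sub-types of points (the first set and the $q$ $k$-sets, each further split by the $(3,3)$ diagonal and by which of $X,Y$ is ``primed''), and one must check in each sub-type that the affine-type hypothesis on $(P,R)$ is compatible with the normalised diagonal of $(X,Y)$. Treating the $(2,2)$ and $(3,3)$ positions as independent instances of the same $2\times 2$ pattern keeps this bookkeeping manageable and reduces the proof to the uniform linear-algebraic core described above.
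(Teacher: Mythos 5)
Your proposal is correct and follows essentially the same route as the paper: containment is translated into the existence of $A\in T_3(q)$ with $A(X,Y)=(P,R)$, the top-left equations force $a_{11}=0$ via unimodularity, the diagonal and $(3,2)$ equations pin down $a_{22}=a_{33}=1$ and $a_{32}=0$ while yielding condition~1, and the two normalised first-column cases from Theorem~\ref{points3} give conditions~2 and~3. The only blemish is the phrase ``precisely one of $x_{22},y_{22}$ is nonzero,'' which is false for a $k$-set with $k\neq 0$ (there $x_{22}=k$ and $y_{22}=1$ are both nonzero); your actual argument, which only uses that the normalised diagonal pairs are $(1,0)$ or $(k,1)$, is unaffected.
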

\begin{proof}
Let 
$(X, Y)\subset {^2T_3(q)}$ be a unimodular pair. Suppose that $$T_3(q)\big(P, R\big)\subset T_3(q)\big(X, Y\big).$$ 
Equivalently, there exists a matrix $A=\left[\begin{array}{cclr}
a_{11}&0&0\\ a_{21}&a_{22}&0\\a_{31}&a_{32}&a_{33}\end{array}\right]\in T_3(q)$ such that $A(X, Y)= \left(P, R\right).$ 

If $T_3(q)\big(P, R\big)=T_3(q)\left(\left[\begin{array}{cclr}
0&0&0\\ p_{21}&1&0\\p_{31}&0&1\end{array}\right],\left[\begin{array}{cclr}
0&0&0\\r_{21}&0&0\\r_{31}&r_{32}&r_{33}\end{array}\right]\right)$, 
then $y_{22}=0,$\linebreak $x_{22}, x_{33}\in F^*(q)$ and $a_{22}=x_{22}^{-1}, a_{33}=x_{33}^{-1}$. By Theorem \ref{points3} we can put on $x_{22}=x_{33}=1, x_{32}=0$. Therefore $r_{32}=a_{32}y_{22}+a_{33}y_{32}=y_{32}, r_{33}=a_{33}y_{33}=y_{33}$.  

In the same manner it can be shown  that $x_{22}=p_{22}, x_{32}=p_{32},  x_{33}=p_{33}, y_{22}=r_{22}, y_{32}=r_{32},  y_{33}=r_{33}$ in all other cases   of non-free cyclic submodules of ${^2T_3(q)}$ not contained in any non-unimodular free cyclic submodule of ${^2T_3(q)}$. 

In the light of Theorem \ref{points3} we can consider two cases of $T_3(q)\big(X, Y\big)$:
\begin{enumerate}
\item $x_{11}=1, x_{21}=x_{31}=0$; 

Then $A(X, Y)= \left(P, R\right)\Leftrightarrow a_{11}=a_{32}=0, a_{22}=a_{33}=1, a_{21}=p_{21}, a_{31}=p_{31}, y_{21}=r_{21}-p_{21}y_{11}, y_{31}=r_{31}-p_{31}y_{11}.$
\item $y_{11}=1, y_{21}=y_{31}=0$.

Then $A(X, Y)= \left(P, R\right)\Leftrightarrow a_{11}=a_{32}=0, a_{22}=a_{33}=1, a_{21}=r_{21}, a_{31}=r_{31}, x_{21}=p_{21}-r_{21}x_{11}, x_{31}=p_{31}-r_{31}x_{11}.$
\end{enumerate}
\end{proof}

\begin{cor}\label{parallel.subset}
Let 
$$T_3(q)\big(P, R\big)=T_3(q)\left(\left[\begin{array}{cclr}
0&0&0\\ p_{21}&p_{22}&0\\p_{31}&p_{32}&p_{33}\end{array}\right],\left[\begin{array}{cclr}
0&0&0\\r_{21}&r_{22}&0\\r_{31}&r_{32}&r_{33}\end{array}\right]\right)\subset {^2T_3(q)},$$ be a non-free cyclic submodule  not contained in any non-unimodular free cyclic submodule of ${^2T_3(q)}$.

Two elements 
$$T_3(q)\big(X, Y\big)=T_3(q)\left(\left[\begin{array}{cclr}
x_{11}&0&0\\ x_{21}&x_{22}&0\\x_{31}&x_{32}&x_{33}\end{array}\right],\left[\begin{array}{cclr}
y_{11}&0&0\\y_{21}&y_{22}&0\\y_{31}&y_{32}&y_{33}\end{array}\right]\right),$$
$$T_3(q)\big(X', Y'\big)=T_3(q)\left(\left[\begin{array}{cclr}
x'_{11}&0&0\\ x'_{21}&x_{22}&0\\x'_{31}&x_{32}&x_{33}\end{array}\right],\left[\begin{array}{cclr}
y'_{11}&0&0\\y'_{21}&y_{22}&0\\y'_{31}&y_{32}&y_{33}\end{array}\right]\right)$$ 
of the fixed subset of points of $\mathbb{P}(T_3(q))$, where $\big(X, Y\big), \big(X', Y'\big)$ are of the form as in Theorem \ref{points3}, have no submodule $T_3(q)\big(P, R\big)$ in common if, and only if,
\begin{itemize}
\item
$X=X'=\left[\begin{array}{cclr}
1&0&0\\ 0&x_{22}&0\\0&x_{32}&x_{33}\end{array}\right], Y'=\left[\begin{array}{cclr}
y_{11}&0&0\\y'_{21}&y_{22}&0\\y'_{31}&y_{32}&y_{33}\end{array}\right]$, and $y'_{21}\neq y_{21}\vee y'_{31}\neq y_{31}$ or 
\item
$Y=Y'=\left[\begin{array}{cclr}
1&0&0\\ 0&y_{22}&0\\0&y_{32}&y_{33}\end{array}\right], X'=\left[\begin{array}{cclr}
x_{11}&0&0\\x'_{21}&x_{22}&0\\x'_{31}&x_{32}&x_{33}\end{array}\right]$, and $x'_{21}\neq x_{21}\vee x'_{31}\neq x_{31}.$
\end{itemize}
\end{cor}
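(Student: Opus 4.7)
The plan is to deduce the corollary directly from Proposition \ref{nfcs.ufcs.3}. Given the two points $T_3(q)(X,Y)$ and $T_3(q)(X',Y')$, I would first apply condition~1 of that proposition, which forces the entries $p_{22},p_{32},p_{33},r_{22},r_{32},r_{33}$ of any common $T_3(q)(P,R)$ to equal the corresponding entries of both $(X,Y)$ and $(X',Y')$. Since these six entries are already assumed to coincide between the two points (they lie in the same subset), the problem reduces to analysing the remaining parameters $p_{21},p_{31},r_{21},r_{31}$ subject to the equations supplied by conditions~2 and~3 of the proposition.

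Next I would split into cases according to the two point-types making up each subset in Theorem \ref{points3}: call \emph{type A} the points with $X'=I$ (where condition~3 applies, giving $y_{21}=r_{21}-p_{21}y_{11}$ and $y_{31}=r_{31}-p_{31}y_{11}$) and \emph{type B} the points with $Y'$ having identity-like first column and $x_{11}=0$ (where condition~2 applies, giving $p_{21}=x_{21}+r_{21}x_{11}=x_{21}$ and $p_{31}=x_{31}$). For (A,A) pairs, subtracting the two systems produces $y_{21}-y'_{21}=p_{21}(y'_{11}-y_{11})$ and $y_{31}-y'_{31}=p_{31}(y'_{11}-y_{11})$; if $y_{11}\ne y'_{11}$ a common $(P,R)$ exists, while if $y_{11}=y'_{11}$ it exists precisely when $(y_{21},y_{31})=(y'_{21},y'_{31})$ — so distinct (A,A) pairs fail to share a submodule iff they satisfy the first bullet of the corollary. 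A symmetric argument handles (B,B) pairs and produces the second bullet. In the mixed case (A,B) the equations determine $p_{21},p_{31},r_{21},r_{31}$ uniquely, so a common $T_3(q)(P,R)$ always exists, and these pairs are excluded from the corollary's list. The $k$-set case is strictly parallel once the roles of $X$ and $Y$ are interchanged.

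The routine but error-prone step will be the bookkeeping in the mixed case: one must read off from Theorem~\ref{points3} that type~B points have $x_{11}=0$ (in the first set) or $y_{11}=0$ (in a $k$-set), which is what makes conditions~2 and~3 combine into a solvable system rather than an incompatible one. The main conceptual obstacle — already surmounted by Proposition \ref{nfcs.ufcs.3} — is that without the normal-form parametrisation of points, one would otherwise have to keep track of the $|T_3^*(q)|$ choices of generators for each cyclic submodule. With that proposition in hand, the corollary is essentially a linear-algebra calculation over $F(q)$.
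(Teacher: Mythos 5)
Your proposal is correct and follows essentially the route the paper intends: the corollary is stated without a separate proof precisely because it is the case analysis you carry out — fixing the six entries via condition~1 of Proposition \ref{nfcs.ufcs.3}, then solving the linear systems from conditions~2 and~3 for the two point-types of Theorem \ref{points3}. Your treatment of the (A,A), (B,B) and mixed cases, including the observation that mixed pairs always share a common submodule and are therefore absent from the corollary's list, matches the intended deduction.
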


\begin{ex}\label{contain3}
Let $y_{32}, y_{33}$ be fixed elements of  $F(q)$ and $A=\left[\begin{array}{cclr}
a_{11}&0&0\\ a_{21}&a_{22}&0\\a_{31}&a_{32}&a_{33}\end{array}\right]$ be a matrix of $T_3(q)$.
Choose the subset of points of $\mathbb{P}(T_3(q))$ of the form 
$$\left\{T_3(q)\big(X', Y\big), T_3(q)\big(X, Y'\big); x_{21}, x_{31}, y_{11}, y_{21}, y_{31}\in F(q)\right\},$$ where $$\big(X', Y\big)=\left(\left[\begin{array}{cclr}
1&0&0\\ 0&1&0\\0&0&1\end{array}\right],\left[\begin{array}{cclr}
y_{11}&0&0\\y_{21}&0&0\\y_{31}&y_{32}&y_{33}\end{array}\right]\right),$$
$$\big(X, Y'\big)=\left(\left[\begin{array}{cclr}
0&0&0\\ x_{21}&1&0\\x_{31}&0&1\end{array}\right],\left[\begin{array}{cclr}
1&0&0\\0&0&0\\0&y_{32}&y_{33}\end{array}\right]\right).$$ 
$A\big(X', Y\big)=\left(\left[\begin{array}{cclr}
0&0&0\\ p_{21}&1&0\\p_{31}&0&1\end{array}\right],\left[\begin{array}{cclr}
0&0&0\\r_{21}&0&0\\r_{31}&y_{32}&y_{33}\end{array}\right]\right)\Leftrightarrow a_{11}=a_{32}=0, a_{22}=$ $=a_{33}=1, a_{21}=p_{21}, a_{31}=p_{31}, r_{21}=p_{21}y_{11}+y_{21}, r_{31}=p_{31}y_{11}+y_{31}$ and $p_{21}, p_{31}\in F(q)$.
 
$A\big(X, Y'\big)=\left(\left[\begin{array}{cclr}
0&0&0\\ p_{21}&1&0\\p_{31}&0&1\end{array}\right],\left[\begin{array}{cclr}
0&0&0\\r_{21}&0&0\\r_{31}&y_{32}&y_{33}\end{array}\right]\right)\Leftrightarrow a_{11}=a_{32}=0, a_{22}=$ $=a_{33}=1, a_{21}=r_{21}, a_{31}=r_{31}, p_{21}=x_{21},  p_{31}=x_{31}$ and $r_{21}, r_{31}\in F(q)$.

Hence
$$T_3(q)\left(\left[\begin{array}{cclr}
0&0&0\\ p_{21}&1&0\\p_{31}&0&1\end{array}\right],\left[\begin{array}{cclr}
0&0&0\\p_{21}y_{11}+y_{21}&0&0\\p_{31}y_{11}+y_{31}&y_{32}&y_{33}\end{array}\right]\right)\subset {^2T_3(q)}\big(X', Y\big),$$ for any $p_{21}, p_{31}, y_{11}, y_{21}, y_{31}, y_{32}, y_{33}\in F(q),$ and

$$T_3(q)\left(\left[\begin{array}{cclr}
0&0&0\\ x_{21}&1&0\\x_{31}&0&1\end{array}\right],\left[\begin{array}{cclr}
0&0&0\\r_{21}&0&0\\r_{31}&y_{32}&y_{33}\end{array}\right]\right)\subset {^2T_3(q)}\big(X, Y'\big)$$ for any $x_{21}, x_{31}, r_{21}, r_{31}, y_{32}, y_{33}\in F(q)$. According to Proposition \ref{nfcs.ufcs.3} these are the only non-free cyclic submodules of ${^2T_3(q)}$ not contained in any non-unimodular free cyclic submodule of ${^2T_3(q)}$  and   contained in the considered points of $\mathbb{P}(T(q))$.
\end{ex}

\begin{theorem}\label{affine.plane.3}
Let 
$$\mathbb{A}=\left\{T_3(q)\left(\left[\begin{array}{cclr}
0&0&0\\ p_{21}&x_{22}&0\\p_{31}&x_{32}&x_{33}\end{array}\right],\left[\begin{array}{cclr}
0&0&0\\r_{21}&y_{22}&0\\r_{31}&y_{32}&y_{33}\end{array}\right]\right); p_{31}, r_{31}\in F(q)\right\},$$ where  $p_{21}, x_{22}, x_{32}, x_{33}, r_{21}, y_{22}, y_{32}, y_{33}$ are fixed elements of $F(q)$, be contained in a subset
of non-free cyclic submodules not contained in any non-unimodular free cyclic submodules, and let us regard its elements   as points and free cyclic submodules  containing them as lines. 

Then these points and lines form a point-line incidence structure isomorphic to the {\it affine} plane of order $q$.
\end{theorem}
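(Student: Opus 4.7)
My plan is to mirror the argument of Theorem~\ref{affine.plane}, using Proposition~\ref{nfcs.ufcs.3} and Example~\ref{contain3} for all the incidence calculations. First I would observe that, since only $p_{31}$ and $r_{31}$ vary in $\mathbb{A}$, there is a natural bijection $\mathbb{A}\to F(q)\times F(q)$ sending $T_3(q)(P,R)$ to $(p_{31},r_{31})$, whence $|\mathbb{A}|=q^2$, which is the correct point-count for an affine plane of order $q$.

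The main step is to enumerate the lines, i.e.\ the unimodular free cyclic submodules of ${^2T_3(q)}$ containing at least one element of $\mathbb{A}$. By Proposition~\ref{nfcs.ufcs.3}, any such line must lie in the unique subset of points of $\mathbb{P}(T_3(q))$ whose sub-diagonal data $(x_{22},x_{32},x_{33},y_{22},y_{32},y_{33})$ match those of $\mathbb{A}$, and it must have one of the two canonical forms given in Theorem~\ref{points3}. Example~\ref{contain3} then shows that:
\begin{enumerate}
\item for a line $T_3(q)(X',Y)$ with $X'=I$, containment of a point of $\mathbb{A}$ forces $y_{21}=r_{21}-p_{21}y_{11}$ and the contained points are $\{(p_{31},\,p_{31}y_{11}+y_{31})\colon p_{31}\in F(q)\}$, giving $q^2$ lines indexed by $(y_{11},y_{31})$;
\item for a line $T_3(q)(X,Y')$ with $x_{11}=0$ and $y_{11}=1$, containment forces $x_{21}=p_{21}$ and the contained points are $\{(x_{31},\,r_{31})\colon r_{31}\in F(q)\}$, giving $q$ lines indexed by $x_{31}$.
\end{enumerate}
Altogether this yields $q^2+q$ lines, each of size $q$, and under the bijection above these correspond precisely to the non-vertical and vertical lines of the affine plane of order $q$ in slope--intercept form.

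With this dictionary in place, the axioms of Definition~\ref{aff} are almost immediate, but I would verify them directly. For A1, two distinct points $(p_{31},r_{31})\neq(p_{31}',r_{31}')$ with $p_{31}=p_{31}'$ share a unique vertical line; otherwise they determine a unique slope $y_{11}=(p_{31}'-p_{31})^{-1}(r_{31}'-r_{31})$ and a unique corresponding intercept $y_{31}$. A2 is obtained by keeping the slope (or the vertical type) of $l$ and re-solving for the intercept through $P$. For A3, the points corresponding to $(0,0),(1,0),(0,1)$ are manifestly not collinear.

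The hard part will be bookkeeping rather than substance: Theorem~\ref{points3} presents four shapes of the ambient subset (first set vs.\ $k$-set, each with two sub-shapes according to whether the lower-right entries of the generators are zero or unity), and the incidence formulas must be re-derived from Proposition~\ref{nfcs.ufcs.3} in each case. In every variant, however, the defining relations collapse to the same pair of affine equations in $(p_{31},r_{31})$, so a single uniform argument will cover all the resulting affine planes.
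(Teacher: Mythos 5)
Your proposal is correct and follows essentially the same route as the paper's proof: both rest entirely on the incidence formulas of Proposition~\ref{nfcs.ufcs.3} (equivalently Example~\ref{contain3}) and reduce everything to the pair of affine relations $r_{31}=p_{31}y_{11}+y_{31}$ and $p_{31}=x_{31}$ in the varying coordinates, the paper then checking axioms A1--A3 one by one while you package the same computation as an explicit slope--intercept coordinatization of $AG(2,q)$. The only point to be careful about, which your case analysis does cover, is that two points of $\mathbb{A}$ with equal first coordinates can only lie on a common line of the ``vertical'' type and two with distinct first coordinates only on one of the ``sloped'' type, so that uniqueness in A1 holds across the two shapes of generating pairs from Theorem~\ref{points3}.
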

\begin{proof}
Let $$T_3(q)\big(P, R\big)=T_3(q)\left(\left[\begin{array}{cclr}
0&0&0\\ p_{21}&x_{22}&0\\p_{31}&x_{32}&x_{33}\end{array}\right],\left[\begin{array}{cclr}
0&0&0\\r_{21}&y_{22}&0\\r_{31}&y_{32}&y_{33}\end{array}\right]\right)$$ and $$T_3(q)\big(S, V\big)=T_3(q)\left(\left[\begin{array}{cclr}
0&0&0\\ s_{21}&x_{22}&0\\s_{31}&x_{32}&x_{33}\end{array}\right],\left[\begin{array}{cclr}
0&0&0\\v_{21}&y_{22}&0\\v_{31}&y_{32}&y_{33}\end{array}\right]\right)$$ be any two distinct submodules of $\mathbb{A}$. Assume that they both are contained in the point $T_3(q)\left(\left[\begin{array}{cclr}
1&0&0\\ 0&x_{22}&0\\0&x_{32}&x_{33}\end{array}\right],\left[\begin{array}{cclr}
y_{11}&0&0\\y_{21}&y_{22}&0\\y_{31}&y_{32}&y_{33}\end{array}\right]\right)\in \mathbb{P}(T_3(q))$. By Proposition 
\ref{nfcs.ufcs.3}  we obtain:
$r_{21}=p_{21}y_{11}+y_{21},$
$r_{31}=p_{31}y_{11}+y_{31},$ 
$v_{21}=s_{21}y_{11}+y_{21},$
$v_{31}=s_{31}y_{11}+y_{31}.$
Hence  $(r_{31}-v_{31})=(p_{31}-s_{31})y_{11}$. $p_{31}\neq s_{31}$, so $y_{11}=(p_{31}-s_{31})^{-1}(r_{31}-v_{31})$. Therefore, $y_{11}$ is uniquely determined and so there are $y_{21}, y_{31}$.
Similarly, we get that  $x_{11}, x_{21}, x_{31}$ are also uniquely determined in case of submodules $T_3(q)(P, R), T_3(q)(S, V)$ contained in the point\linebreak $T_3(q)(X, Y)=T_3(q)\left(\left[\begin{array}{cclr}
x_{11}&0&0\\ x_{21}&x_{22}&0\\x_{31}&x_{32}&x_{33}\end{array}\right],\left[\begin{array}{cclr}
1&0&0\\0&y_{22}&0\\0&y_{32}&y_{33}\end{array}\right]\right)\in \mathbb{P}(T_3(q))$.

We have just shown that any two distinct submodules of $\mathbb{A}$ are contained in exactly one point of $\mathbb{P}(T_3(q))$. Thereby axiom {\bf A1} of Definition \ref{aff} is satisfied.

\vspace*{0.3cm}

Let $T_3(q)(X, Y)=T_3(q)\left(\left[\begin{array}{cclr}
1&0&0\\ 0&x_{22}&0\\0&x_{32}&x_{33}\end{array}\right],\left[\begin{array}{cclr}
y_{11}&0&0\\y_{21}&y_{22}&0\\y_{31}&y_{32}&y_{33}\end{array}\right]\right)$ be a fixed element of the set $L$ of all free cyclic submodules containing elements of $\mathbb{A}$.
From Proposition 
\ref{nfcs.ufcs.3}  we get:
\begin{itemize}
\item an element of $\mathbb{A}$ not contained in the point $T_3(q)(X, Y)$ is of the form 
$$T_3(q)(P, R')=T_3(q)\left(\left[\begin{array}{cclr}
0&0&0\\ p_{21}&x_{22}&0\\p_{31}&x_{32}&x_{33}\end{array}\right],\left[\begin{array}{cclr}
0&0&0\\p_{21}y_{11}+y_{21}&y_{22}&0\\p_{31}y_{11}+y'_{31}&y_{32}&y_{33}\end{array}\right]\right)$$ 
where  $y'_{31}\in F(q)$ and $y'_{31}\neq y_{31}$;
\item a point of $L$ not containing any element of $\mathbb{A}$ contained in  $T_3(q)(X, Y)$ is of the form $$T_3(q)(X, Y'')=T_3(q)\left(\left[\begin{array}{cclr}
1&0&0\\ 0&x_{22}&0\\0&x_{32}&x_{33}\end{array}\right],\left[\begin{array}{cclr}
y_{11}&0&0\\y_{21}&y_{22}&0\\y''_{31}&y_{32}&y_{33}\end{array}\right]\right),$$ where  $y''_{31}\in F(q)$ and $y''_{31}\neq y_{31}$;
\item there is a unique point of the form $T_3(q)(X, Y'')$, which contains a fixed submodule $T_3(q)(P, R')$, namely 
$$T_3(q)(X, Y')=T_3(q)\left(\left[\begin{array}{cclr}
1&0&0\\ 0&x_{22}&0\\0&x_{32}&x_{33}\end{array}\right],\left[\begin{array}{cclr}
y_{11}&0&0\\y_{21}&y_{22}&0\\y'_{31}&y_{32}&y_{33}\end{array}\right]\right).$$
\end{itemize}
We have just proved that axiom {\bf A2} of Definition \ref{aff} is satisfied for any \linebreak 
$$T_3(q)\left(\left[\begin{array}{cclr}
1&0&0\\ 0&x_{22}&0\\0&x_{32}&x_{33}\end{array}\right],\left[\begin{array}{cclr}
y_{11}&0&0\\y_{21}&y_{22}&0\\y_{31}&y_{32}&y_{33}\end{array}\right]\right)\in \mathbb{P}(T_3(q)).$$ Of course, the result is the same for any free cyclic submodule\linebreak  $$T_3(q)\left(\left[\begin{array}{cclr}
x_{11}&0&0\\ x_{21}&x_{22}&0\\x_{31}&x_{32}&x_{33}\end{array}\right],\left[\begin{array}{cclr}
1&0&0\\0&y_{22}&0\\0&y_{32}&y_{33}\end{array}\right]\right)\in \mathbb{P}(T_3(q)).$$

\vspace*{0.3cm}

Assume that submodules 
$T_3(q)\Bigg(\left[\begin{array}{cclr}
0&0&0\\ 0&x_{22}&0\\p_{31}&x_{32}&x_{33}\end{array}\right],\left[\begin{array}{cclr}
0&0&0\\ 0&y_{22}&0\\r_{31}&y_{32}&y_{33}\end{array}\right]\Bigg)$ of any set $\mathbb{A}$ such that $p_{31}=0, r_{31}=0$ (the first one), $p_{31}=0, r_{31}=1$ (the second one),  and
$p_{31}=1,
r_{31}=0$ (the third one) 
 are contained in the same free cyclic submodule $T_3(q)\left(\left[\begin{array}{cclr}
x_{11}&0&0\\ x_{21}&x_{22}&0\\x_{31}&x_{32}&x_{33}\end{array}\right],\left[\begin{array}{cclr}
y_{11}&0&0\\y_{21}&y_{22}&0\\y_{31}&y_{32}&y_{33}\end{array}\right]\right)\in  \mathbb{P}(T_3(q))$. Then there exist  $a_{31}, b_{31}, c_{31}\in F(q)$ such that
$$\begin{cases}
1.\ a_{31}x_{11}+x_{31}=0.\\2.\ a_{31}y_{11}+y_{31}=0.\\3.\ b_{31}x_{11}+x_{31}=0.\\4.\ b_{31}y_{11}+y_{31}=1.\\5.\ c_{31}x_{11}+x_{31}=1.\\6.\ c_{31}y_{11}+y_{31}=0.\end{cases}$$ Hence $(b_{31}-a_{31})x_{11}=0, (b_{31}-a_{31})y_{11}=1$. Consequently $b_{31}\neq a_{31}, x_{11}=0$ and $x_{31}=-a_{31}x_{11}=0$. But then we get $c_{31}x_{11}+x_{31}=0$, which contradicts   equation (5) and thereby it contradicts the assumption that the above-given three  submodules are contained in the same point of  $\mathbb{P}(T_3(q))$.

It means that for any set $\mathbb{A}$ there exist three submodules  not contained in the same point of  $\mathbb{P}(T_3(q))$, thereby axiom {\bf A3} of Definition \ref{aff} is satisfied.

\vspace*{0.3cm}

Moreover, any element of $L$ contains exactly $q$ submodules of $\mathbb{A}$, what follows directly from Proposition \ref{nfcs.ufcs.3}.  

\vspace*{0.3cm}

This shows that any set $\mathbb{A}$  and the points of  $\mathbb{P}(T_3(q))$ containing elements of $\mathbb{A}$  give a point-line incidence structure isomorphic to the affine plane of order $q$. So, there are altogether $q^3(q+1)^2$ mutually isomorphic affine planes of order $q$ associated with the projective line  $\mathbb{P}(T_3(q))$.
\end{proof}

The above considerations lead to the following results:

\begin{cor}\label{points.subset.affine.parallel}\begin{enumerate}
\item\label{points.subset}
Two unimodular free cyclic submodules  $$T_3(q)\left(\left[\begin{array}{cclr}
x_{11}&0&0\\ x_{21}&x_{22}&0\\x_{31}&x_{32}&x_{33}\end{array}\right],\left[\begin{array}{cclr}
y_{11}&0&0\\y_{21}&y_{22}&0\\y_{31}&y_{32}&y_{33}\end{array}\right]\right),$$ $$T_3(q)\left(\left[\begin{array}{cclr}
w_{11}&0&0\\ w_{21}&w_{22}&0\\w_{31}&w_{32}&w_{33}\end{array}\right],\left[\begin{array}{cclr}
z_{11}&0&0\\z_{21}&z_{22}&0\\z_{31}&z_{32}&z_{33}\end{array}\right]\right)$$ of $\mathbb{P}(T_3(q))$ represent lines of affine planes of the same subset if, and only if, 
$w_{22}=x_{22}, w_{32}=x_{32}, w_{33}=x_{33}, z_{22}=y_{22}, z_{32}=y_{32}, z_{33}=y_{33}$. 

This follows from {\rm Corollary \ref{points.of.subset}} and  {\rm Theorem \ref{affine.plane.3}}.

\item\label{points.affine} Two unimodular free cyclic submodules  $$T_3(q)\big(X, Y\big)=T_3(q)\left(\left[\begin{array}{cclr}
x_{11}&0&0\\ x_{21}&x_{22}&0\\x_{31}&x_{32}&x_{33}\end{array}\right],\left[\begin{array}{cclr}
y_{11}&0&0\\y_{21}&y_{22}&0\\y_{31}&y_{32}&y_{33}\end{array}\right]\right),$$ 
$$T_3(q)\big(W, Z\big)=T_3(q)\left(\left[\begin{array}{cclr}
w_{11}&0&0\\ w_{21}&x_{22}&0\\w_{31}&x_{32}&x_{33}\end{array}\right],\left[\begin{array}{cclr}
z_{11}&0&0\\z_{21}&y_{22}&0\\z_{31}&y_{32}&y_{33}\end{array}\right]\right)$$ of $\mathbb{P}(T_3(q))$, where $\big(X, Y\big), \big(W, Z\big)$ are of the form as in Theorem \ref{points3},  represent lines of the same affine plane if, and only if, there exist non-free cyclic submodules 
$$T_3(q)\left(\left[\begin{array}{cclr}
0&0&0\\ s_{21}&x_{22}&0\\s_{31}&x_{32}&x_{33}\end{array}\right],\left[\begin{array}{cclr}
0&0&0\\s'_{21}&y_{22}&0\\s'_{31}&y_{32}&y_{33}\end{array}\right]\right),$$
$$T_3(q)\left(\left[\begin{array}{cclr}
0&0&0\\ s_{21}&x_{22}&0\\v_{31}&x_{32}&x_{33}\end{array}\right],\left[\begin{array}{cclr}
0&0&0\\s'_{21}&y_{22}&0\\v'_{31}&y_{32}&y_{33}\end{array}\right]\right)$$
 of ${^2T_3(q)}$  not contained in any non-unimodular free cyclic submodule of ${^2T_3(q)}$ such that:

$x_{21}=p_{21}-r_{21}x_{11}, x_{31}=p_{31}-r_{31}x_{11}$ if $y_{11}=1, y_{21}=y_{31}=0$,

$y_{21}=r_{21}-p_{21}y_{11}, y_{31}=r_{31}-p_{31}y_{11}$ if $x_{11}=1, x_{21}=x_{31}=0$,

$w_{21}=p_{21}-r_{21}w_{11}, w_{31}=s_{31}-v_{31}w_{11}$  if $z_{11}=1, z_{21}=z_{31}=0$, 

$z_{21}=r_{21}-p_{21}z_{11}, z_{31}=v_{31}-s_{31}z_{11}$  if $w_{11}=1, w_{21}=w_{31}=0$. 

This follows from {\rm Proposition \ref{nfcs.ufcs.3}} and  {\rm Theorem \ref{affine.plane.3}}.

\item\label{points.parallel} Two unimodular free cyclic submodules  $$T_3(q)\big(X, Y\big)=T_3(q)\left(\left[\begin{array}{cclr}
x_{11}&0&0\\ x_{21}&x_{22}&0\\x_{31}&x_{32}&x_{33}\end{array}\right],\left[\begin{array}{cclr}
y_{11}&0&0\\y_{21}&y_{22}&0\\y_{31}&y_{32}&y_{33}\end{array}\right]\right),$$ 
$$T_3(q)\big(W, Z\big)=T_3(q)\left(\left[\begin{array}{cclr}
w_{11}&0&0\\ w_{21}&x_{22}&0\\w_{31}&x_{32}&x_{33}\end{array}\right],\left[\begin{array}{cclr}
z_{11}&0&0\\z_{21}&y_{22}&0\\z_{31}&y_{32}&y_{33}\end{array}\right]\right)$$ of $\mathbb{P}(T_3(q))$ represent parallel lines of an affine plane if, and only if, 
$$T_3(q)\big(W, Z\big)=T_3(q)\left(\left[\begin{array}{cclr}
x_{11}&0&0\\ x_{21}&x_{22}&0\\w_{31}&x_{32}&x_{33}\end{array}\right],\left[\begin{array}{cclr}
y_{11}&0&0\\y_{21}&y_{22}&0\\z_{31}&y_{32}&y_{33}\end{array}\right]\right).$$
They are distinct exactly if, $w_{31}\neq x_{31}$ or  $z_{31}\neq y_{31}$.

This follows from {\rm Corollary  \ref{parallel.subset}} and  {\rm Theorem \ref{affine.plane.3}}.    
\end{enumerate}
\end{cor}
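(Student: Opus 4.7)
The plan is to verify the three assertions in turn by translating the cited results into the language of affine planes; essentially no new calculation is required, only careful bookkeeping.

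For part (\ref{points.subset}), I would first invoke Theorem \ref{affine.plane.3}: the lines of an affine plane associated with $\mathbb{P}(T_3(q))$ are precisely the unimodular free cyclic submodules containing the submodules of a fixed set $\mathbb{A}$, and these $\mathbb{A}$'s are grouped according to the $q^2+q$ subsets described in Theorem \ref{points3}. Corollary \ref{points.of.subset} states precisely when two unimodular free cyclic submodules lie in the same such subset, namely when the entries $x_{22},x_{32},x_{33},y_{22},y_{32},y_{33}$ on the one side agree with $w_{22},w_{32},w_{33},z_{22},z_{32},z_{33}$ on the other. Combining these two observations immediately yields (\ref{points.subset}).

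For part (\ref{points.affine}), I would go one level deeper. Assuming that both submodules already sit in the same subset of $\mathbb{P}(T_3(q))$, representing lines of the same affine plane is equivalent to sharing at least one (equivalently, a full set of) non-free cyclic submodule from a single fixed $\mathbb{A}$. Proposition \ref{nfcs.ufcs.3} gives the exact parametrization of those $T_3(q)(P,R)$ contained in a given point $T_3(q)(X,Y)$, broken down by which of the two normal forms of Theorem \ref{points3} the point satisfies ($x_{11}=1$, $x_{21}=x_{31}=0$ versus $y_{11}=1$, $y_{21}=y_{31}=0$). I would apply this parametrization to both $T_3(q)(X,Y)$ and $T_3(q)(W,Z)$ and equate the resulting expressions for $p_{21}, p_{31}, r_{21}, r_{31}$ and for $s_{21}, s_{31}, s'_{21}, s'_{31}, v_{31}, v'_{31}$; the four conditional relations in the statement then fall out according to which normal form applies to each of the two points.

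Part (\ref{points.parallel}) is the most direct. By Theorem \ref{affine.plane.3} and Definition \ref{aff}, two distinct lines of a common affine plane are parallel precisely when the corresponding points of $\mathbb{P}(T_3(q))$ share no submodule of $\mathbb{A}$. Corollary \ref{parallel.subset} describes exactly this situation inside a fixed subset: the only freedom lies in the $(3,1)$-entries of the generators, with all other entries forced to coincide. Reading the conditions of Corollary \ref{parallel.subset} back into the enumeration of points in Theorem \ref{points3} yields the stated form of $T_3(q)(W,Z)$ and the distinctness condition $w_{31}\neq x_{31}$ or $z_{31}\neq y_{31}$.

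The main obstacle is not conceptual but notational: matching Proposition \ref{nfcs.ufcs.3} and Corollary \ref{parallel.subset}, both of which were stated for a fixed subset with distinguished first column, against the general entries $(x_{ij},y_{ij},w_{ij},z_{ij})$ in the present corollary. All calculations are already implicit in Example \ref{contain3} and Proposition \ref{nfcs.ufcs.3}, so the proof will amount to quoting them with the appropriate substitutions and verifying that the case distinctions line up.
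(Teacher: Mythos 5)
Your proposal is correct and follows essentially the same route as the paper, which itself disposes of each part simply by citing the relevant prior results (Corollary \ref{points.of.subset}, Proposition \ref{nfcs.ufcs.3}, Corollary \ref{parallel.subset}, together with Theorem \ref{affine.plane.3}); your text merely unpacks those citations in the intended way. The only point to watch is the phrase ``sharing at least one'' in part \ref{points.affine}: two lines of the same affine plane need not have a common point of $\mathbb{A}$ (they may be parallel), so the correct reading --- which your four conditional relations do implement --- is that each of the two free cyclic submodules contains some (not necessarily the same) element of one and the same set $\mathbb{A}$.
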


\begin{theorem}
Any affine plane of order $q$ associated with $\mathbb{P}(T_3(q))$ can be extended to the projective plane of order $q$ in the following way. 

Consider the set of all submodules
$$T_3(q)\left(\left[\begin{array}{cclr}
x_{11}&0&0\\ x_{21}&x_{22}&0\\x_{31}&x_{32}&x_{33}\end{array}\right],\left[\begin{array}{cclr}
y_{11}&0&0\\y_{21}&y_{22}&0\\y_{31}&y_{32}&y_{33}\end{array}\right]\right)\in  \mathbb{P}(T_3(q))$$ representing lines of a given affine plane associated with $\mathbb{P}(T_3(q))$. 
\begin{enumerate}
\item For all such submodules, where $x_{11}, x_{21}, y_{11}, y_{21}$ are fixed elements of $F(q)$, that is to say for fixed set of parallel lines of an affine plane,  a submodule  $$T_3(q)\left(\left[\begin{array}{cclr}
0&0&0\\ 0&0&0\\x_{11}&0&0\end{array}\right],\left[\begin{array}{cclr}
0&0&0\\0&0&0\\y_{11}&0&0\end{array}\right]\right)\subset {^2T_3(q)}$$ must be taken into account as a new point, and
\item A  free cyclic submodule  $$T_3(q)\left(\left[\begin{array}{cclr}
x_{22}&0&0\\ y_{22}&0&0\\0&\delta_{y_{22}0}&0\end{array}\right],\left[\begin{array}{cclr}
y_{22}&0&0\\\delta_{y_{22}0}&0&0\\0&y_{22}&0\end{array}\right]\right)\subset {^2T_3(q)},$$ where $\delta_{y_{22}0}$ stands for the Kronecker delta, must be taken into account as a new line. 
\end{enumerate} 
\end{theorem}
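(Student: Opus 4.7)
The plan is to mirror the proof of Theorem~\ref{projective.plane} (the ternion case), with Proposition~\ref{crucial}, Example~\ref{first.set}, and Corollary~\ref{planes.classes.lines} replaced by the $T_3(q)$-analogues Proposition~\ref{nfcs.ufcs.3}, Example~\ref{contain3}, and Corollary~\ref{points.subset.affine.parallel}, and with Theorem~\ref{nfcs} replaced by Theorem~\ref{fcs.out.3}. Concretely, I would fix an affine plane $\mathbb{A}$ as in Theorem~\ref{affine.plane.3}, with its prescribed diagonal data and second-column parameters $p_{21}, r_{21}$. By Corollary~\ref{points.subset.affine.parallel}(\ref{points.parallel}), the $q+1$ parallel classes of lines of $\mathbb{A}$ are indexed by the quadruple $(x_{11}, x_{21}, y_{11}, y_{21})$ appearing in the generators of the lines; inspection of Example~\ref{contain3} shows that within $\mathbb{A}$ the entries $x_{21}, y_{21}$ are functionally pinned down by $(x_{11}, y_{11})$ together with $p_{21}, r_{21}$, so the parallel classes biject with the $q+1$ admissible pairs $(x_{11}, y_{11})$ singled out by Theorem~\ref{points3}.

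\textbf{Step 1: new points.} For each parallel class the prescribed new point is obtained by applying the single matrix $A = \bigl[\begin{smallmatrix}0&0&0\\0&0&0\\1&0&0\end{smallmatrix}\bigr]$ to any line-generator $(X, Y)$ in the class; a one-line computation yields exactly $T_3(q)\bigl(\bigl[\begin{smallmatrix}0&0&0\\0&0&0\\x_{11}&0&0\end{smallmatrix}\bigr], \bigl[\begin{smallmatrix}0&0&0\\0&0&0\\y_{11}&0&0\end{smallmatrix}\bigr]\bigr)$. Two such submodules for distinct admissible pairs $(x_{11}, y_{11}) \neq (x'_{11}, y'_{11})$ are distinct, since equality would force (via Remark~\ref{generators.of.fcs}) a unit $u \in F^*(q)$ with $u(x_{11}, y_{11}) = (x'_{11}, y'_{11})$, which fails on the admissible index set. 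Hence the $q+1$ new points are in bijection with the $q+1$ parallel classes.

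\textbf{Step 2: new line.} I would match the prescribed generator against the outlier catalogue of Theorem~\ref{fcs.out.3}: when $y_{22} \neq 0$, left-multiplying by the unit $y_{22}^{-1} I$ identifies it with the fourth generator of the $k$-set for $k = x_{22} y_{22}^{-1}$; when $y_{22}=0$, the Kronecker delta $\delta_{y_{22}0}=1$ activates and, after a unit normalisation of $x_{22}$, the generator coincides with the fourth generator of the first set. In both cases the new line is a bona fide non-unimodular free cyclic submodule. A direct matrix multiplication then shows that every new point constructed in Step~1 lies inside the new line, while the defining property of $\mathbb{A}$ (its points are non-free cyclic submodules not contained in any outlier-generated free cyclic submodule) guarantees that no original affine point lies on the new line.

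\textbf{Wrap-up and obstacle.} Combining the two steps with Theorem~\ref{affine.plane.3} and the standard projective-closure procedure (cf.\ Section~2 and Theorem~\ref{projective.plane}), the enlarged incidence structure is a projective plane of order $q$: every old line acquires exactly one new point at infinity, the new line meets every old parallel class in its designated new point, and no two distinct points fail to be joined. The main obstacle is the case-by-case bookkeeping across the subset types arising from Theorems~\ref{points3} and~\ref{fcs.out.3}: in every subcase one must check that the Kronecker-delta device produces the appropriate outlier and that the $q+1$ admissible pairs $(x_{11}, y_{11})$ genuinely enumerate the parallel classes of $\mathbb{A}$ without repetition.
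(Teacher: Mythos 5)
Your proposal is correct and follows essentially the same route as the paper's own proof: verifying containment of the new point $T_3(q)\bigl(\bigl[\begin{smallmatrix}0&0&0\\0&0&0\\x_{11}&0&0\end{smallmatrix}\bigr],\bigl[\begin{smallmatrix}0&0&0\\0&0&0\\y_{11}&0&0\end{smallmatrix}\bigr]\bigr)$ in every line of its parallel class, establishing distinctness of these points across classes via the normalisation $x_{11}=1$ or $y_{11}=1$ from Theorem \ref{points3} together with Corollary \ref{points.subset.affine.parallel}, and reducing the new line to the two cases $y_{22}=0$ and $y_{22}\neq 0$ matched against the outlier catalogue of Theorem \ref{fcs.out.3} before checking it contains exactly the new points.
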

\begin{proof} Obviously, $T_3(q)\left(\left[\begin{array}{cclr}
x_{11}&0&0\\ x_{21}&x_{22}&0\\x_{31}&x_{32}&x_{33}\end{array}\right],\left[\begin{array}{cclr}
y_{11}&0&0\\y_{21}&y_{22}&0\\y_{31}&y_{32}&y_{33}\end{array}\right]\right)\in  \mathbb{P}(T_3(q))$
contains 
$T_3(q)\left(\left[\begin{array}{cclr}
0&0&0\\ 0&0&0\\x_{11}&0&0\end{array}\right],\left[\begin{array}{cclr}
0&0&0\\ 0&0&0\\y_{11}&0&0\end{array}\right]\right)$. 

By Corollary \ref{points.subset.affine.parallel} (\ref{points.affine} and \ref{points.parallel}) we get immediately that if unimodular free cyclic submodules $$T_3(q)\left(\left[\begin{array}{cclr}
x_{11}&0&0\\ x_{21}&x_{22}&0\\x_{31}&x_{32}&x_{33}\end{array}\right],\left[\begin{array}{cclr}
y_{11}&0&0\\y_{21}&y_{22}&0\\y_{31}&y_{32}&y_{33}\end{array}\right]\right),$$ $$T_3(q)\left(\left[\begin{array}{cclr}
w_{11}&0&0\\ w_{21}&x_{22}&0\\w_{31}&x_{32}&x_{33}\end{array}\right],\left[\begin{array}{cclr}
z_{11}&0&0\\z_{21}&y_{22}&0\\z_{31}&y_{32}&y_{33}\end{array}\right]\right)$$ represent non-parallel lines of an affine plane, then $x_{11}\neq w_{11}$ or $y_{11}\neq z_{11}$. According to Theorem \ref{points3} we can assume that $x_{11}=w_{11}=1$ or $y_{11}=z_{11}=1$. Consequently each of cyclic submodules $\left(\left[\begin{array}{cclr}
0&0&0\\ 0&0&0\\x_{11}&0&0\end{array}\right],\left[\begin{array}{cclr}
0&0&0\\ 0&0&0\\y_{11}&0&0\end{array}\right]\right)$,
$\left(\left[\begin{array}{cclr}
0&0&0\\ 0&0&0\\w_{11}&0&0\end{array}\right],\left[\begin{array}{cclr}
0&0&0\\ 0&0&0\\z_{11}&0&0\end{array}\right]\right)$ of ${^2T_3(q)}$
contained in distinct sets of parallel lines of an affine plane satisfies one of the two conditions: $x_{11}=w_{11}=1, y_{11}\neq z_{11}$ or $y_{11}=z_{11}=1, x_{11}\neq w_{11}$, hence they are also distinct. 
We have shown that any line of a set of parallel lines of an affine plane contains the added new point and these new points are distinct for distinct sets of parallel lines.

By Theorem \ref{fcs.out.3} it suffices to consider only two cases of non-unimodular free cyclic submodules $T_3(q)\left(\left[\begin{array}{cclr}
x_{22}&0&0\\ y_{22}&0&0\\0&\delta_{y_{22}0}&0\end{array}\right],\left[\begin{array}{cclr}
y_{22}&0&0\\\delta_{y_{22}0}&0&0\\0&y_{22}&0\end{array}\right]\right)\subset {^2T_3(q)}$, namely 
$x_{22}=1, y_{22}=0$ and 
$x_{22}\in F(q), y_{22}=1$. 
It is easy to verify now that for any affine plane the added new line contains all the new points.
\end{proof}

\begin{theorem}
Let us regard sets of parallel lines of affine planes associated with a subset of points of $\mathbb{P}(T_3(q))$ as lines, and sets of points of such affine planes as points. Then these points and lines form a point-line incidence structure, called a {\sl $2$-affine plane}, isomorphic to the affine plane of order $q$. \linebreak
A point and a line of a $2$-affine plane are incident if any element of the set representing this point is contained in some element of the set  representing this line.
\end{theorem}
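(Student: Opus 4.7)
The plan is to set up an explicit bijection between the $2$-affine plane and the standard coordinate plane over $F(q)$; once that is done, all the affine axioms (and the order) reduce to elementary linear algebra.

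First, I would fix a subset of points of $\mathbb{P}(T_3(q))$ of the type described in Theorem \ref{points3} and examine what the $2$-affine plane over this subset looks like. By Theorem \ref{affine.plane.3} the affine planes contained in this subset are parametrized by pairs $(p_{21},r_{21}) \in F(q)^{2}$, all other matrix entries being frozen by the choice of subset; hence the points of the $2$-affine plane are naturally labelled by $F(q)^{2}$. Combining Corollary \ref{points.subset.affine.parallel}(\ref{points.parallel}) with the canonical forms of Theorem \ref{points3}, each parallel class of such an affine plane is uniquely encoded either by a pair $(y_{11},y_{21}) \in F(q)^{2}$ (for lines of the first canonical form) or by a single element $x_{21} \in F(q)$ (for lines of the second canonical form); these parallel classes are the lines of the $2$-affine plane. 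A quick count gives $q^{2}$ points and $q^{2}+q = q(q+1)$ lines, the correct totals for $\mathrm{AG}(2,q)$.

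Next, I would work out the incidence. Using Proposition \ref{nfcs.ufcs.3} in the explicit form of Example \ref{contain3}, a direct calculation shows that a first-form parallel class with parameters $(y_{11},y_{21})$ contains an element of the affine plane $(p_{21},r_{21})$ inside one of its $q$ lines if and only if $r_{21} = p_{21}y_{11}+y_{21}$, and that a second-form class with parameter $x_{21}$ does so if and only if $p_{21}=x_{21}$. Because all $q$ lines inside a given parallel class share the same parameters $(y_{11},y_{21})$ or $x_{21}$, the ``some''/``any'' quantifier appearing in the incidence rule of the theorem is immaterial: both readings collapse to the containment just written, which in turn is the condition for the parallel class to be a parallel class of the affine plane labelling the point.

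Under the identifications above, the $2$-affine plane is then literally $\mathrm{AG}(2,F(q))$: points are $(p_{21},r_{21})\in F(q)^{2}$, and lines are the affine lines $r_{21}=p_{21}y_{11}+y_{21}$ together with the verticals $p_{21}=x_{21}$. Each axiom of Definition \ref{aff} follows at once: for \textbf{A1}, two distinct points with distinct first coordinates lie on a unique first-form line obtained by solving a $2\times 2$ system with invertible coefficient $p_{21}^{(1)}-p_{21}^{(2)}$, and two points with a common first coordinate lie on the unique vertical line; for \textbf{A2}, first-form classes are parallel iff they share the same $y_{11}$, vertical classes are pairwise parallel, and a first-form class always meets a vertical class, yielding the usual parallel postulate; for \textbf{A3}, the triple $(0,0),(1,0),(0,1)$ is non-collinear; and each vertical line carries exactly $q$ points, so the order equals $q$. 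The main obstacle I foresee is not any one calculation but rather the bookkeeping involved in translating the multi-index notation of Theorem \ref{points3} into the compact two-parameter description above, and in ensuring that the argument goes through uniformly for the first set and for each of the $q$ $k$-sets; these cases are, however, structurally identical.
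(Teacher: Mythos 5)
Your proposal is correct, and it rests on exactly the same computational ingredients as the paper's proof (the containment criterion of Proposition \ref{nfcs.ufcs.3}, the canonical forms of Theorem \ref{points3}, and the parallelism description of Corollary \ref{points.subset.affine.parallel}), but it organizes them differently. The paper verifies axioms \textbf{A1}--\textbf{A3} directly at the level of submodules; in particular, in its case $p_{21}\neq s_{21},\, r_{21}\neq v_{21}$ of \textbf{A1} it must exhibit explicit matrices $A$ and $B$ proving that the line obtained by normalizing the first coordinate and the line obtained by normalizing the second coordinate are the \emph{same} set of free cyclic submodules, because it derives incidence through both normalizations. Your coordinatization avoids this: by fixing once and for all the non-redundant parametrization of parallel classes -- $q^{2}$ classes labelled by $(y_{11},y_{21})$ with incidence $r_{21}=p_{21}y_{11}+y_{21}$, and $q$ ``vertical'' classes labelled by $x_{21}$ with incidence $p_{21}=x_{21}$ -- you land literally in $\mathrm{AG}(2,q)$ and the axioms become one-line checks. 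The price is that you must justify that this parametrization is faithful and exhaustive (distinct labels give distinct classes, no class is counted twice across the two families, and the count $q^{2}\cdot q+q\cdot q=q^{2}(q+1)$ exhausts the subset), which you correctly pin on Theorem \ref{points3} and Corollary \ref{points.subset.affine.parallel}(\ref{points.parallel}); the invariance of the vanishing of the $(1,1)$ entry under change of generator is what keeps the two families disjoint. One small refinement: the collapse of the ``any/some'' quantifiers in the incidence rule is better attributed to the fact that the containment condition of Proposition \ref{nfcs.ufcs.3} factors into a condition on $(p_{21},r_{21})$ versus the class label, independent of $(p_{31},r_{31})$, together with an equation for $y_{31}$ (resp.\ $x_{31}$) that is always uniquely solvable -- rather than merely to the classes sharing parameters. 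With that said, your argument is complete and, if anything, cleaner than the one in the paper.
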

\begin{proof}
According to Corollary \ref{points.subset.affine.parallel} (\ref{points.parallel}) and Theorem \ref{points3}, sets of parallel lines of  affine planes associated with a subset of points of $\mathbb{P}(T_3(q))$, i.e., lines of a 2-affine plane, are one of the two following forms:
$$\left\{T_3(q)\left(\left[\begin{array}{cclr}
1&0&0\\ 0&x_{22}&0\\0&x_{32}&x_{33}\end{array}\right],\left[\begin{array}{cclr}
y_{11}&0&0\\y_{21}&y_{22}&0\\y_{31}&y_{32}&y_{33}\end{array}\right]\right); y_{31}\in F(q)\right\},$$
$$\left\{T_3(q)\left(\left[\begin{array}{cclr}
0&0&0\\ x_{21}&x_{22}&0\\x_{31}&x_{32}&x_{33}\end{array}\right],\left[\begin{array}{cclr}
1&0&0\\0&y_{22}&0\\0&y_{32}&y_{33}\end{array}\right]\right); x_{31}\in F(q)\right\},$$
where $x_{21}, y_{11}, y_{21}$ run through all the elelments of $F(q)$, $x_{22}=1, y_{22}=0$ and $x_{32}, x_{33}, y_{32}, y_{33}$ are fixed elements of $F(q)$ such that $x_{33}=1\vee y_{33}=1;$

\vspace*{0.3cm}

or

$$\left\{T_3(q)\left(\left[\begin{array}{cclr}
x_{11}&0&0\\ x_{21}&x_{22}&0\\x_{31}&x_{32}&x_{33}\end{array}\right],\left[\begin{array}{cclr}
1&0&0\\0&y_{22}&0\\0&y_{32}&y_{33}\end{array}\right]\right); x_{31}\in F(q)\right\},$$
$$\left\{T_3(q)\left(\left[\begin{array}{cclr}
1&0&0\\ 0&x_{22}&0\\0&x_{32}&x_{33}\end{array}\right],\left[\begin{array}{cclr}
0&0&0\\y_{21}&y_{22}&0\\y_{31}&y_{32}&y_{33}\end{array}\right]\right); y_{31}\in F(q)\right\},$$
where $x_{11}, x_{21}, y_{21}$ run through all the elelments of $F(q)$, $y_{22}=1$ and $x_{22}$, $x_{32}$, $x_{33}$, $y_{32}$, $y_{33}$ are fixed elements of $F(q)$ such that $x_{33}=1\vee y_{33}=1.$

Then sets of points of such affine planes, i.e., points of a 2-affine plane, are of the form
$$\left\{T_3(q)\left(\left[\begin{array}{cclr}
0&0&0\\ p_{21}&x_{22}&0\\p_{31}&x_{32}&x_{33}\end{array}\right],\left[\begin{array}{cclr}
0&0&0\\r_{21}&y_{22}&0\\r_{31}&y_{32}&y_{33}\end{array}\right]\right); p_{31}, r_{31}\in F(q)\right\},$$
where $p_{21}, r_{21}$ run through all the elelments of $F(q)$.

\vspace*{0.3cm}

Consider two distinct points 
$$P=\left\{T_3(q)\left(\left[\begin{array}{cclr}
0&0&0\\ p_{21}&x_{22}&0\\p_{31}&x_{32}&x_{33}\end{array}\right],\left[\begin{array}{cclr}
0&0&0\\r_{21}&y_{22}&0\\r_{31}&y_{32}&y_{33}\end{array}\right]\right); p_{31}, r_{31}\in F(q)\right\},$$

$$S=\left\{T_3(q)\left(\left[\begin{array}{cclr}
0&0&0\\ s_{21}&x_{22}&0\\s_{31}&x_{32}&x_{33}\end{array}\right],\left[\begin{array}{cclr}
0&0&0\\v_{21}&y_{22}&0\\v_{31}&y_{32}&y_{33}\end{array}\right]\right); s_{31}, v_{31}\in F(q)\right\}.$$
By Proposition \ref{nfcs.ufcs.3} they are inicident with a line $l$ if, and only if, 

$$l=l_1=\left\{T_3(q)\left(\left[\begin{array}{cclr}
1&0&0\\ 0&x_{22}&0\\0&x_{32}&x_{33}\end{array}\right],\left[\begin{array}{cclr}
y_{11}&0&0\\y_{21}&y_{22}&0\\y_{31}&y_{32}&y_{33}\end{array}\right]\right); y_{31}\in F(q)\right\},$$
where $y_{21}=r_{21}-p_{21}y_{11}=v_{21}-s_{21}y_{11}$. Then $r_{21}-v_{21}=(p_{21}-s_{21})y_{11}$;

or

$$l=l_2=\left\{T_3(q)\left(\left[\begin{array}{cclr}
x_{11}&0&0\\ x_{21}&x_{22}&0\\x_{31}&x_{32}&x_{33}\end{array}\right],\left[\begin{array}{cclr}
1&0&0\\0&y_{22}&0\\0&y_{32}&y_{33}\end{array}\right]\right); x_{31}\in F(q)\right\},$$
where $x_{21}=p_{21}-r_{21}x_{11}=s_{21}-v_{21}x_{11}$; hence, we get $p_{21}-s_{21}=(r_{21}-v_{21})x_{11}.$

$P\neq S\Leftrightarrow p_{21}\neq s_{21}\vee r_{21}\neq v_{21}$, hence we can consider three following cases:
\begin{enumerate}
\item $p_{21}=s_{21}, r_{21}\neq v_{21}$;

Then there exists a unique line 
$$l=l_2=\left\{T_3(q)\left(\left[\begin{array}{cclr}
0&0&0\\ p_{21}&x_{22}&0\\x_{31}&x_{32}&x_{33}\end{array}\right],\left[\begin{array}{cclr}
1&0&0\\0&y_{22}&0\\0&y_{32}&y_{33}\end{array}\right]\right); x_{31}\in F(q)\right\},$$ 
incident with  $P$ and $S$.
\item $p_{21}\neq s_{21}, r_{21}= v_{21}$;

Then there exists a unique line
$$l=l_1=\left\{T_3(q)\left(\left[\begin{array}{cclr}
1&0&0\\ 0&x_{22}&0\\0&x_{32}&x_{33}\end{array}\right],\left[\begin{array}{cclr}
0&0&0\\r_{21}&y_{22}&0\\y_{31}&y_{32}&y_{33}\end{array}\right]\right); y_{31}\in F(q)\right\},$$ 
incident with $P$ and $S$.
\item $p_{21}\neq s_{21}, r_{21}\neq v_{21}$;

Then $l_1$ consists of unimodular free cyclic submodules of $\mathbb{P}(T_3(q))$ generated by pairs
$$\left(\left[\begin{array}{cclr}
1&0&0\\ 0&x_{22}&0\\0&x_{32}&x_{33}\end{array}\right],\left[\begin{array}{cclr}
(p_{21}-s_{21})^{-1}(r_{21}-v_{21})&0&0\\r_{21}-p_{21}(p_{21}-s_{21})^{-1}(r_{21}-v_{21})&y_{22}&0\\
y_{31}&y_{32}&y_{33}\end{array}\right]\right),$$ 
where $y_{31}$ runs through all the elements of $F(q)$, and $l_2$ consists of unimodular free cyclic submodules of $\mathbb{P}(T_3(q))$ generated by pairs
$$\left(\left[\begin{array}{cclr}
(r_{21}-v_{21})^{-1}(p_{21}-s_{21})&0&0\\ p_{21}-r_{21}(r_{21}-v_{21})^{-1}(p_{21}-s_{21})&x_{22}&0\\x_{31}&x_{32}&x_{33}\end{array}\right],\left[\begin{array}{cclr}
1&0&0\\0&y_{22}&0\\0&y_{32}&y_{33}\end{array}\right]\right),$$ 
where $x_{31}$ runs through all the elements of $F(q).$ 

By using Proposition  \ref{nfcs.ufcs.3} again we get that $l_1$ consists of unimodular free cyclic submodules of $\mathbb{P}(T_3(q))$ generated by pairs:
$$\left(\left[\begin{array}{cclr}
1&0&0\\ 0&x_{22}&0\\0&x_{32}&x_{33}\end{array}\right],\left[\begin{array}{cclr}
(p_{21}-s_{21})^{-1}(r_{21}-v_{21})&0&0\\r_{21}-p_{21}(p_{21}-s_{21})^{-1}(r_{21}-v_{21})&y_{22}&0\\
r_{31}-p_{31}(p_{21}-s_{21})^{-1}(r_{21}-v_{21})&y_{32}&y_{33}\end{array}\right]\right),$$ where $p_{31}, r_{31}$ run through all the elements of $F(q)$,
and  $l_2$ consists of unimodular free cyclic submodules of $\mathbb{P}(T_3(q))$ generated by pairs:
$$\left(\left[\begin{array}{cclr}
(r_{21}-v_{21})^{-1}(p_{21}-s_{21})&0&0\\ p_{21}-r_{21}(r_{21}-v_{21})^{-1}(p_{21}-s_{21})&x_{22}&0\\
p_{31}-r_{31}(r_{21}-v_{21})^{-1}(p_{21}-s_{21})&x_{32}&x_{33}\end{array}\right],\left[\begin{array}{cclr}
1&0&0\\0&y_{22}&0\\0&y_{32}&y_{33}\end{array}\right]\right),$$ where $p_{31}, r_{31}$ runs through all the elements of $F(q)$. 

Let $(X_1, Y_1), (X_2, Y_2)$ be pairs generating element of $l_1$ and $l_2$, respectively. Matrix $A=\left[\begin{array}{cclr}
(r_{21}-v_{21})^{-1}(p_{21}-s_{21})&0&0\\p_{21}-r_{21}(r_{21}-v_{21})^{-1}(p_{21}-s_{21})&1&0\\
p_{31}-r_{31}(r_{21}-v_{21})^{-1}(p_{21}-s_{21})&0&1\end{array}\right]$ leads to the equation $A(X_1, Y_1)=(X_2, Y_2)$  for any pair $(X_1, Y_1)$, and  matrix $B=\left[\begin{array}{cclr}
(p_{21}-s_{21})^{-1}(r_{21}-v_{21})&0&0\\r_{21}-p_{21}(p_{21}-s_{21})^{-1}(r_{21}-v_{21})&1&0\\
r_{31}-p_{31}(p_{21}-s_{21})^{-1}(r_{21}-v_{21})&0&1\end{array}\right]$ leads to the equation $B(X_2, Y_2)=(X_1, Y_1)$ for any pair $(X_2, Y_2)$.  
So, by the Remark \ref{generators.of.fcs}, $l_1=l_2$; consequently, there exists a unique line incident with $P$ and $S$.
\end{enumerate}
Thereby axiom  {\bf A1} of an affine plane is proven.

\vspace*{0.3cm}

Let
$$l=\left\{T_3(q)\left(\left[\begin{array}{cclr}
1&0&0\\ 0&x_{22}&0\\0&x_{32}&x_{33}\end{array}\right],\left[\begin{array}{cclr}
y_{11}&0&0\\y_{21}&y_{22}&0\\y_{31}&y_{32}&y_{33}\end{array}\right]\right); y_{31}\in F(q)\right\},$$ 
be a fixed set of parallel lines of an affine plane associated with a subset $C$ of $\mathbb{P}(T_3(q))$. From Proposition \ref{nfcs.ufcs.3} we get:
\begin{itemize}
\item a set of points of any affine plane associated with $C$ such that their elements are not contained in any element of $l$ is of the form $$P=\left\{T_3(q)\left(\left[\begin{array}{cclr}
0&0&0\\ p_{21}&x_{22}&0\\p_{31}&x_{32}&x_{33}\end{array}\right],\left[\begin{array}{cclr}
0&0&0\\r_{21}&y_{22}&0\\r_{31}&y_{32}&y_{33}\end{array}\right]\right); p_{31}, r_{31}\in F(q)\right\},$$ where $r_{21}\neq y_{21}+p_{21}y_{11}$;
\item a set $l'$ of parallel lines of an affine plane associated with $C$ such that any element of $l'$ has nothing in common with any submodule not contained in a non-unimodular free cyclic submodule  and contained in some element of $l$, is of the form
$$l=\left\{T_3(q)\left(\left[\begin{array}{cclr}
1&0&0\\ 0&x_{22}&0\\0&x_{32}&x_{33}\end{array}\right],\left[\begin{array}{cclr}
y_{11}&0&0\\y'_{21}&y_{22}&0\\y_{31}&y_{32}&y_{33}\end{array}\right]\right); y_{31}\in F(q)\right\},$$
where $y'_{21}\neq y_{21}$;
\item
any element of the fixed set $P$ is contained in some element of $l'$ if, and only if, $y'_{21}\neq r_{21}-p_{21}y_{11}$.\end{itemize}
Therefore, for any line $l$ and any point $P$ not on $l$ there is a unique line $l'$ which contains the point $P$ and does not meet the line $l$.
The result is the same in the case of a set $$l=\left\{T_3(q)\left(\left[\begin{array}{cclr}
x_{11}&0&0\\ x_{21}&x_{22}&0\\x_{31}&x_{32}&x_{33}\end{array}\right],\left[\begin{array}{cclr}
1&0&0\\0&y_{22}&0\\0&y_{32}&y_{33}\end{array}\right]\right); x_{31}\in F(q)\right\},$$ 
of parallel lines of an affine plane associated with $\mathbb{P}(T_3(q))$. So, axiom  {\bf A2} of an affine plane is satisfied. 

\vspace*{0.3cm}

Assume that elements of sets 
$$\left\{T_3(q)\left(\left[\begin{array}{cclr}
0&0&0\\ 0&x_{22}&0\\p_{31}&x_{32}&x_{33}\end{array}\right],\left[\begin{array}{cclr}
0&0&0\\0&y_{22}&0\\r_{31}&y_{32}&y_{33}\end{array}\right]\right); p_{31}, r_{31}\in F(q)\right\},$$
$$\left\{T_3(q)\left(\left[\begin{array}{cclr}
0&0&0\\ 0&x_{22}&0\\p_{31}&x_{32}&x_{33}\end{array}\right],\left[\begin{array}{cclr}
0&0&0\\1&y_{22}&0\\r_{31}&y_{32}&y_{33}\end{array}\right]\right); p_{31}, r_{31}\in F(q)\right\}$$
of points of affine planes associated with subset $C$ of $\mathbb{P}(T_3(q))$ are contained in elements of the same set $l$
of parallel lines of an affine plane associated with $C$. By Proposition \ref{nfcs.ufcs.3} we get immediately that
$$l=\left\{T_3(q)\left(\left[\begin{array}{cclr}
0&0&0\\ 0&x_{22}&0\\x_{31}&x_{32}&x_{33}\end{array}\right],\left[\begin{array}{cclr}
1&0&0\\0&y_{22}&0\\0&y_{32}&y_{33}\end{array}\right]\right); x_{31}\in F(q)\right\}$$
and elements of the set 
$$\left\{T_3(q)\left(\left[\begin{array}{cclr}
0&0&0\\ 1&x_{22}&0\\p_{31}&x_{32}&x_{33}\end{array}\right],\left[\begin{array}{cclr}
0&0&0\\0&y_{22}&0\\r_{31}&y_{32}&y_{33}\end{array}\right]\right); p_{31}, r_{31}\in F(q)\right\}$$
of points of an affine plane associated with $C$ are not contained in elements of $l$. It means that  axiom  {\bf A3} of an affine plane is satisfied. 

\vspace*{0.3cm}

Moreover, elements of any set of parallel lines of an affine plane associated with subset $C$ of $\mathbb{P}(T_3(q))$ contain $q$ sets of points of affine planes associated with $C$, what follows directly from Proposition  \ref{nfcs.ufcs.3}. So, there are altogether $q(q+1)^2$ mutually isomorphic $2$-affine planes of order $q$.
\end{proof}
Figure 2 serves as a visualization of the structure of a 2-affine plane of order three. 

\begin{figure}[pth!]
\centering
\includegraphics[width=4cm]{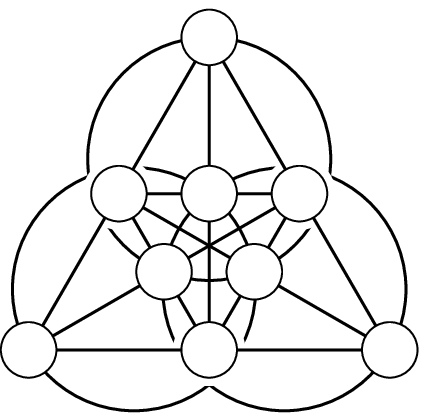} 
\centering
\includegraphics[width=10cm]{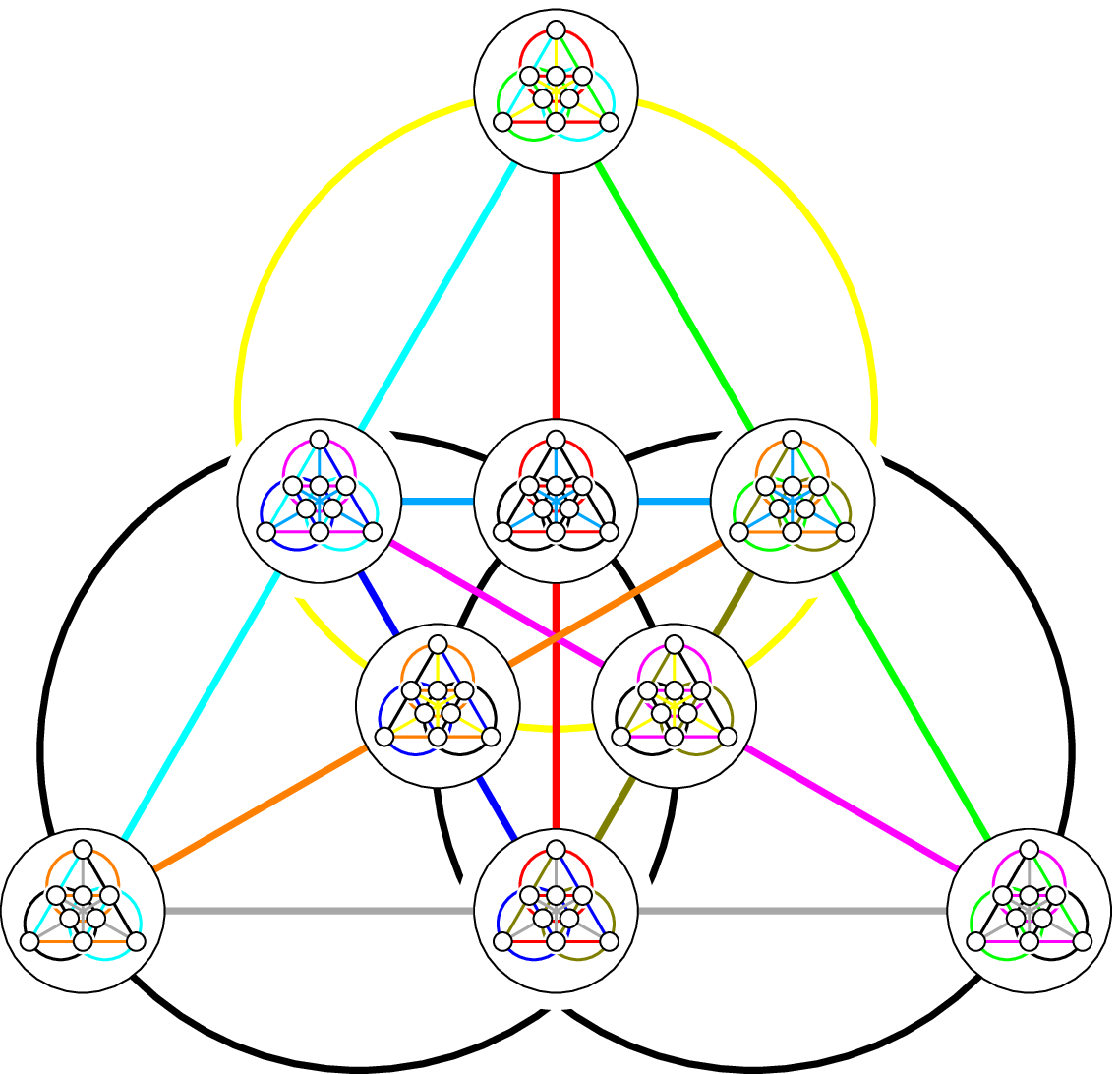}
\caption{A pictorial representation of the affine plane of order three (top) and its 2-affine counterpart (bottom).}
\end{figure}

\section{Concluding remarks}
Although we studied in detail only the case of $n=2$ and $n=3$, it represents no difficulty for the interested reader to readily adjust our lines of reasoning to the case of arbitrary $n$. In the future we also plan to analyse other similar types/classes of finite associative rings with unity, in particular those exhibiting non-unimodular free cyclic submodules whose `homomorphic' images are projective lines themselves.

\section*{Acknowledgment}
This work was supported by both the National Scholarship Programme of the Slovak Republic (E.B.) and the Slovak VEGA Grant Agency, 
Project $\#$ 2/0003/16 (M.S.). We are extremely grateful to Zsolt Szab\'o for an electronic version of Figure 2.

Edyta Bartnicka\\
University of Warmia and Mazury\\
Faculty of Mathematics and Computer Science \\
S\l{}oneczna 54 Street, P-10710 Olsztyn\\ 
Poland\\ 
E-mail: {\tt edytabartnicka@wp.pl}\\ 
\\
Metod Saniga\\
Astronomical Institute\\
Slovak Academy of Sciences\\
SK-05960 Tatransk\' a Lomnica\\ 
Slovak Republic\\
E-mail: {\tt msaniga@astro.sk}

\end{document}